\documentclass[12pt,a4paper,dvipdfmx]{amsart}
\numberwithin{equation}{section}

\usepackage[all]{xy}
\usepackage{amsmath}
\usepackage{amsfonts}
\usepackage{amssymb}
\usepackage{amsthm}
\usepackage{amscd}
\usepackage{tikz}
\usepackage{comment}
\usepackage{xcolor}
\usepackage{fancybox}
\usepackage{ascmac}

\usepackage{MnSymbol}
\usepackage{bm}

\theoremstyle{plain}
\newtheorem{thm}{Theorem}[section]
\newtheorem{cor}[thm]{Corollary}
\newtheorem{lem}[thm]{Lemma}
\newtheorem{prop}[thm]{Proposition}
\newtheorem{conj}[thm]{Conjecture}
\newtheorem*{maintheorem*}{Main Theorem}
\newtheorem*{thm*}{Theorem}

\theoremstyle{definition}

\newtheorem{claim}{Claim}[thm]
\newtheorem*{claim*}{Claim}
\newtheorem*{nc}{Notation and Convention}
\newtheorem{notation}[thm]{Notation}
\newtheorem*{ack}{Acknowledgements}
\newtheorem*{fact*}{Fact}

\theoremstyle{remark}
\newtheorem*{case*}{Case}
\newtheorem*{subcase*}{Subcase}

\newcommand{\mbC}{\mathbb{C}}

\newcommand{\mbG}{\mathbb{G}}
\newcommand{\mbH}{\mathbb{H}}

\newcommand{\mbK}{\mathbb{K}}
\newcommand{\mbL}{\mathbb{L}}

\newcommand{\mbN}{\mathbb{N}}
\newcommand{\mbO}{\mathbb{O}}
\newcommand{\mbP}{\mathbb{P}}

\newcommand{\mbR}{\mathbb{R}}

\newcommand{\mcA}{\mathcal{A}}

\newcommand{\mcD}{\mathcal{D}}
\newcommand{\mcE}{\mathcal{E}}
\newcommand{\mcF}{\mathcal{F}}

\newcommand{\mcH}{\mathcal{H}}
\newcommand{\mcI}{\mathcal{I}}

\newcommand{\mcK}{\mathcal{K}}
\newcommand{\mcL}{\mathcal{L}}
\newcommand{\mcM}{\mathcal{M}}
\newcommand{\mcN}{\mathcal{N}}
\newcommand{\mcO}{\mathcal{O}}
\newcommand{\mcP}{\mathcal{P}}
\newcommand{\mcQ}{\mathcal{Q}}

\newcommand{\mcS}{\mathcal{S}}
\newcommand{\mcT}{\mathcal{T}}
\newcommand{\mcU}{\mathcal{U}}
\newcommand{\mcV}{\mathcal{V}}
\newcommand{\mcW}{\mathcal{W}}
\newcommand{\mcX}{\mathcal{X}}
\newcommand{\mcY}{\mathcal{Y}}
\newcommand{\mcZ}{\mathcal{Z}}

\newcommand{\mrH}{\mathrm{H}}

\newcommand{\Ker}{\mathrm{Ker}}
\newcommand{\Coker}{\mathrm{Coker}}

\newcommand{\id}{\mathrm{id}}

\newcommand{\rank}{\mathrm{rank}}

\newcommand{\mbfS}{\mathbf{S}}
\newcommand{\mbfD}{\mathbf{D}}

\newcommand{\Pn}{\mathbb{P}^{n}}
\newcommand{\GLV}{\mathrm{GL}(V)}

\newcommand{\Supp}{\mathrm{Supp}}
\newcommand{\End}{\mathrm{End}}
\newcommand{\Aut}{\mathrm{Aut}}
\newcommand{\mbKi}{\mathbb{K}^{(i)}}
\newcommand{\wtPd}{\widetilde{\mcP}^{\dagger}}
\newcommand{\tilf}{\tilde{f}}
\newcommand{\tilmcN}{\tilde{\mathcal{N}}}

\newcommand{\barR}{\overline{R}}

\newcommand{\barS}{\overline{\mathbf{S}}}

\newcommand{\Hilb}{\mathrm{Hilb}}
\begin{document}

\title[Lang-Vojta conjecture]{Lang-Vojta conjecture over function fields for very general log projective spaces}
\author[Takeshi ABE]{Takeshi ABE}
\thanks{2020 Mathematics Subject Classification: \textit{14J70, 14G40, 11G50}. }
\address{Faculty of Science and Engineering, 
Doshisha University, 1-3 Tatara Miyakodani, Kyotanabe-shi 610-0394, Japan}
\email{takabe@mail.doshisha.ac.jp}
\maketitle
\begin{abstract}
We consider  a bound of  degrees of curves by their genera in the log pair $(\Pn\times C, D)$,
where $C$ is a smooth complex projective curve and $D$ is a very general 
smooth divisor on $\Pn\times C$.
Our study is motivated by  a function field analogue of the Lang-Vojta conjecture.
\end{abstract}

%

\section{Introduction and Main result}
In this paper we consider  a bound of  degrees of curves in the log pair $(\Pn\times C, D)$,
where $C$ is a smooth projective curve and $D$ is a smooth divisor on $\Pn\times C$.
Our result is motivated by  a function field analogue of the Lang-Vojta conjecture.

\subsection{Lang-Vojta Conjecture}\label{subsection-LV}

Lang-Vojta conjecture (\cite[Conjecture F.5.3.6]{HS})
predicts that if $X$ is a smooth projective variety over a number field $k$,
$S$ is a finite set of places of $k$ containing all infinite places,
$D$ is a normal crossing divisor on $X$ such that the divisor $K_{X}+D$ is big,
then no set of $S$-integral $k$-rational points on $X\setminus D$ is Zariski dense.

Pursuing an analogy between diophantine problems and Nevanlinna Theory,
Vojta proposed the following inequality : 
\begin{conj}\label{conj-Vojta1} $($\cite[Conjecture 15.6]{Vojta}$)$
Let $X$, $k$ and $S$ be as above.
Let $D$ be a normal crossing divisor on $X$, and let $\mcA$ be an ample line bundle on $X$.
Then for any $\epsilon >0$, there is a proper Zariski-closed subset $Z$ of $X$
such that for all $c\in \mbR$, the inequality
\begin{equation}
N_{S}(D,x)\geq h_{K_{X}(D),k}(x)-h_{\mcA, k}(x)-c
\end{equation}
holds for almost $x\in (X\setminus Z)(k)$.
\end{conj}
Here $N_{S}(D, x)$ is the counting function (see \cite[\S 11]{Vojta}),
$h_{K_{X}(D),k}(x)$ and $h_{\mcA, k}(x)$ are heights of $x$ with respect to $K_{X}(D)$ and $\mcA$ 
respectively,
and ``almost all'' means ``all but finitely many.''

Conjecture \ref{conj-Vojta1} implies Lang-Vojta conjecture (cf. \cite[Proposition 15.9]{Vojta}).
Vojta also proposes a stronger inequality :

\begin{conj}[ \text{[Vojta, Conjecture 25.3 (b)]} ]\label{conj-Vojta2}
Let $X$, $k$, $S$, $D$, and $\mcA$ be as in Conjecture \ref{conj-Vojta1}.
Let $r$ be a positive integer.

For any $\epsilon >0$,
there is a proper Zariski-closed subset $Z$ of $X$
such that for all $c\in \mbR$,
the inequality
\begin{equation*}
N_{S}^{(1)}(D,x)
+d_{k}(x)
\geq
h_{\mcK(D), k}(x)-\epsilon h_{\mcA,k}(x)-c
\end{equation*}
holds for almost all $x\in(X\setminus Z)(\bar{k})$
with $[\kappa(x):k]\leq r$.
\end{conj}
Here $N_{S}^{(1)}(D,x)$ is the $1$-truncated counting function with respect to $D$
(see \cite[\S 23]{Vojta}),
and $d_{k}(x)$ is the logarithmic discriminant (see \cite[\S 24]{Vojta}).

\subsection{Function field analogue of Lang-Vojta conjecture}\label{subsection-FunctLV}
We consider a function field analogue of Lang-Vojta conjecture.
We here give a formulation using a model over a curve.

Let $F$ be an algebraically closed field of characteristic zero.
Let $C$ be a smooth projective curve over $F$,
let $\mcX$ be a smooth projective variety over $F$,
and let $\pi:\mcX \to C$ be a surjective morphism with connected fibers.
Let $D$ be a normal crossing divisor on $\mcX$,
and let $S$ be a finite set of closed points of $C$.
\begin{conj}\label{conj-FunctLV}
Suppose that $K_{\mcX}+D$ is $\pi$-big.
Then there is a proper Zariski-closed subset $Z$ of $\mcX$
such that the family of curves  $\tau(C)$ on $\mcX$,
where $\tau:C\to \mcX$ is a section of $\pi$ such that
$\tau^{-1}(D)\subset S$ and $\tau(C)\not \subset Z$,
is bounded.
\end{conj}
Here a family of curves on $\mcX$ is bounded if
the degrees (with respect to an ample line bundle on $\mcX$)  of the curves in the family are bounded.

We can also consider a function field analogue of Conjecture \ref{conj-Vojta2}.
For an irreducible curve $Y$ on $\mcX$ mapping onto $C$ by $\pi$,
and not contained in $D$,
define $N^{(1)}_{S}(D,Y)$ and $d_{C}(Y)$ as follows.
Let $\nu_{Y}:\widetilde{Y}\to Y$ be the normalization.
The set-theoretic inverse $\nu_{Y}^{-1}(D\cap Y)$ is a finite set of points of $\widetilde{Y}$,
and define
\[
N^{(1)}_{S}(D,Y)
:=\frac{1}{\deg (\pi|_{Y})\circ \nu_{Y}}
\left|
\nu_{Y}^{-1}(D\cap Y) 
\cap
\left( \widetilde{Y}\setminus ((\pi|_{Y})\circ \nu_{Y})^{-1}(S) \right)
\right|.
\]
So $N^{(1)}_{S}(D,Y)$ counts the number of points of $\nu_{Y}^{-1}(D\cap Y) $
contained in the open subset $\widetilde{Y}\setminus ((\pi|_{Y})\circ \nu_{Y})^{-1}(S)$,
divided by the degree of $(\pi|_{Y})\circ \nu_{Y}$.
The logarithmic discriminant term in the function field case is defined as  (cf. \cite[\S 28]{Vojta})
\[
d_{C}(Y):=\frac{1}{\deg (\pi|_{Y})\circ \nu_{Y}}
(2g(\widetilde{Y})-2) -(2g(C)-2),
\]
where $g(\widetilde{Y})$ and $g(C)$ are the genera of $\widetilde{Y}$ and $C$ respectively.
For a line bundle $\mcM$ on $\mcX$, define
\[
h_{\mcM}(Y):=
\frac{1}{\deg (\pi|_{Y})\circ \nu_{Y}}
\deg\nu_{Y}^{*}(\mcM|_{Y}).
\]

The following ia a  function field analogue of Conjecture \ref{conj-Vojta2}.
\begin{conj}\label{conj-ffVojta2}
Let $\mcA$ be an ample line bundle on $\mcX$,
and let $r$ be a positive integer.
For any $\epsilon>0$,
there is a proper Zariski closed subset $Z$ of $\mcX$ such that
for all $c\in \mbR$,
the inequality
\begin{equation}\label{eq-ffVojta}
N_{S}^{(1)}(D,Y)+d_{C}(Y)
\geq
h_{\mcK(D)}(Y)-\epsilon h_{\mcA}(Y)-c
\end{equation}
holds for all irreducible curves $Y$ on $\mcX$, 
not contained in $Z\cup D$, such that $\deg (\pi|_{Y})\circ\nu_{Y} \leq r$,
except a bounded family of curves  on $\mcX$.
\end{conj}
Let $\eta\in C$ be the generic point of the curve $C$,
and let $\mcX_{\eta}$ and $D_{\eta}$ be the fiber of $\pi : \mcX\to C$ and
$\pi|_{D}:D\to C$ over $\eta$ respectively.

The function field analogue of Lang-Vojta conjectures
was studied by several people : 
Corvaja and Zannier (\cite{CZ08}) and Turchet (\cite{T})
considered the case
when $\mcX_{\eta}=\mbP^{2}_{\eta}$ and $D_{\eta}$ is a quartic 
consisting of two lines and a conic.
Corvaja and Zannier (\cite{CZ13}) and Capuano and Turchet (\cite{CT})
treats the case when $\mcX_{\eta}\setminus D_{\eta}$ is a ramified cover of $\mbG_{m,\eta}^{2}$.
Guo, Nguyen, Sun and Wang studied the case when 
$\mcX_{\eta}\setminus D_{\eta}$ is a ramified cover of $\mbG_{m,\eta}^{n}$.
They also considered the inequality (\ref{eq-ffVojta}) when $\mcX_{\eta}$ is a toric variety
(see \cite[Theorem 1]{GNSW} for the precise statement).

We also note that 
Yamanoi (\cite[Theorem 5]{Y}) proves
Conjecture \ref{conj-ffVojta2}  when $\mcX_{\eta}$ is a curve.

\subsection{Main result}\label{subsection-MT}

From now on we let $F=\mbC$, the field of complex numbers.
In this paper, we consider the case $\mcX=\Pn\times C$ and
the divisor $D$ is ``very general'' in the sense clarified below.

Let $V:=\mbC^{n+1}$ and $\Pn:=\mbP(V)$.
Let $C$ be a smooth projective curve over $\mbC$,
and $\mcL$ a line bundle on $C$ generated by global sections.
Let $W \subset\mrH^{0}(C,\mcL)$ be a subspace generating $\mcL$,
that is, the natural map $W\otimes\mcO_{C}\to \mcL$ is surjective.
Put $\Sigma:=\mbP_{*}\left( \mrH^{0}(\Pn, \mcO(d)) \otimes W\right)$
(see Notation and Convention below for the notation $\mbP_{*}$).
Since $\mrH^{0}(\Pn, \mcO(d)) \otimes W
\subset \mrH^{0}\left(
 \Pn\times C,  \mcO(d) \boxtimes \mcL
\right)$,
each point $\sigma\in \Sigma$ determines a non-zero global section, up to scalar,  of the line bundle
$\mcO(d)\boxtimes \mcL$ on $\Pn\times C$, 
and we denote  its divisor of zeros
 by $D_{\sigma}\subset  \Pn\times C$.
The fibers of the projection $pr_{C}: \Pn \times C \to C$ are isomorphic to $\Pn$,
and curves in the fibers are said to be  vertical.
For $\sigma\in \Sigma$, 
we denote by $\mathbf{E}_{\sigma}\subset \Pn\times C$
the Zariski closure of the union of vertical lines intersecting
$D_{\sigma}$ in at most two points.
You can see that if $d>n+1$ and 
the point $\sigma\in \Sigma$ is general, then $\mathbf{E}_{\sigma}\subsetneq \Pn\times C$.
Let $S$ be a finite set  of (closed) points of $C$.

\begin{thm}\label{thm-main}
Assume that $d>\frac{3n+3}{2}$.
Let $\sigma\in \Sigma$ be a very general point,
i.e. $\sigma \in \Sigma\setminus (\text{countable union of proper Zariski-closed subsets of $\Sigma$})$.
Then for each non-vertical curve $Y$, not contained in $D_{\sigma}$,  on $\Pn\times C$
with its normalization $\nu: \widetilde{Y}\to Y \subset  \Pn\times C$,
either (i) or (ii) of the following holds:
\begin{enumerate}
 \item[(i)] The inequality
\begin{equation*}
\begin{split}
\frac{1}{e}\deg(pr_{\Pn}\circ\nu)^{*}\mcO_{\Pn}(1) & 
\leq \frac{1}{e}
\left\{
2g(\widetilde{Y})-2
+\left|
\nu^{-1}(D_{\sigma})\setminus
(pr_{C}\circ \nu)^{-1}(S)
\right|
\right\} \\
&-(2g(C)-2)+c_{0}
\end{split}
\end{equation*}
holds, where $e$ is the degree of the finite map $Y\to C$, 
and $c_{0}=l(n-1)+\max\{0,2g(C)-2+|S|-l\}$;
 \item[(ii)] The curve $Y$ lies in $\mathbf{E}_{\sigma}$.
\end{enumerate}
\end{thm}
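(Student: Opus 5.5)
The plan is the variational method of Clemens–Ein–Voisin, in the logarithmic form used for complements of very general hypersurfaces (Voisin, Pacienza–Rousseau). I would apply it to the universal divisor over $\Sigma$.

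Let $\mcX:=\Sigma\times\Pn\times C$ and let $\mcD\subset\mcX$ be the universal divisor, whose fiber over $\sigma$ is $D_{\sigma}$. The relative Hilbert scheme of curves in $\mcX/\Sigma$ has countably many components. Hence, for $\sigma$ outside a countable union of proper closed subsets, every curve $Y\subset\Pn\times C$ appears in a family over some dominant étale (generically finite) $U\to\Sigma$, giving $\mathcal{Y}\subset U\times\Pn\times C$. After normalizing we get $\tilde f:\widetilde{\mathcal{Y}}\to\mcX$ with $\dim\widetilde{\mathcal{Y}}=\dim\Sigma+1$, and $\tilde f^{-1}(\mcD)$ is fiberwise the set $\nu^{-1}(D_\sigma)$. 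It therefore suffices to prove the inequality (i) for the general member of such a family, unless the family lies in $\bigcup_\sigma\mathbf{E}_\sigma$.

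The first key step is a global generation statement for logarithmic vector fields. Let $T_{\mcX/C}(-\log\mcD)$ denote vector fields tangent to $\mcD$ that are vertical over $C$, i.e. with no $\partial_C$-component. I would show that the vertical-over-$\Sigma$ part of this sheaf, twisted by $\mcO_{\Pn}(1)$ and a pull-back of $\mcL$, is generated by global sections outside a locus whose fiber over $\sigma$ is $\mathbf{E}_\sigma$. This generalizes Voisin's and Merker's vector fields on the universal hypersurface. The proof works on the affine cone $V\times W$-coordinates: one writes explicit fields $\sum_j A_j\partial_{a_j}+\sum_i v_i\partial_{x_i}$ that preserve the universal equation $\sum a_{I,w}x^{I}w=0$. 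The generation fails exactly along vertical lines where the restricted polynomial has at most two distinct zeros. This is the origin of the exceptional set $\mathbf{E}_\sigma$ in (ii), and it is where the hypothesis $d>\frac{3n+3}{2}$ is consumed: the count of monomials needed to kill the obstruction requires degree about $3(n+1)/2$. I expect this step to be the main obstacle. I would first try the standard trick of using fields of the form $x_i\partial_{x_j}$ corrected by coefficient derivations, and then check the rank at points of $\mcD$ by a dimension count along lines.

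The second step is the degree estimate. On $\widetilde{\mathcal{Y}}$ consider the relative log tangent sheaf $T_{\widetilde{\mathcal{Y}}/U}(-\log(\tilde f^{-1}\mcD\setminus S))$. Its degree on a fiber is $-(2g(\widetilde Y)-2+|\nu^{-1}(D_\sigma)\setminus(pr_C\nu)^{-1}(S)|)$, once one corrects by the contribution over $S$ and by the ramification of $\widetilde Y\to C$. The latter is measured against $e(2g(C)-2)$, and the correction terms produce $c_0=l(n-1)+\max\{0,2g(C)-2+|S|-l\}$.

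Since $\mathcal{Y}$ dominates $U$, the global log vector fields from Step 1 restrict to give sections of $\tilde f^*T_{\mcX/C}(-\log\mcD)\otimes\mcO(1)$ modulo $T_{\widetilde{\mathcal{Y}}}$. Wedging $n-1$ of these with the tangent of the curve and projecting to $\Pn$, I expect to obtain a nonzero map
\[
\Omega^{\,\cdot}\text{-type line bundle } \nu^{*}\bigl(K_{\Pn}(D)\bigr)\otimes(\text{twist})\longrightarrow \omega_{\widetilde Y}\bigl(\nu^{-1}D_\sigma\setminus(pr_C\nu)^{-1}S\bigr)\otimes pr_C^{*}(\text{correction}),
\]
which is nonzero whenever $Y\not\subset\mathbf{E}_\sigma$. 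Comparing degrees and dividing by $e$ then gives (i). Here the twists used in Step 1, namely $n-1$ copies of $\mcL$, contributing $l(n-1)$, together with the $\mcO(1)$-twists, leave exactly one copy of $\deg(pr_{\Pn}\nu)^*\mcO(1)$ on the left-hand side.
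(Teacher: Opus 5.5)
Your Hilbert-scheme reduction to families is fine; the paper does the same, with $\GLV$-invariant strata. The core inequality, however, is not reached. Step 1 carries all the weight, and it is both unjustified and, even if granted, too weak.

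The positivity actually available holds with \emph{no} exceptional locus. For example, $T_{\Pn\times\barS/\Pn}(-\log\mbfD)\simeq\mcM^{d}_{\Pn}\boxtimes\mcO_{\barS}(1)$, and $\mcM^{d}_{\Pn}\otimes\mcO_{\Pn}(1)$ is globally generated everywhere; the $\mathfrak{gl}(V)$-fields generate the $\Pn$-directions. Nothing in such a computation singles out vertical lines meeting $D_\sigma$ in at most two points. You give no mechanism for this, nor for how a ``monomial count'' would consume exactly $d>\frac{3n+3}{2}$.

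Moreover, your own bookkeeping in Step 3 does not give coefficient $1$. You wedge $n-1$ fields, each costing one $\mcO_{\Pn}(1)$, against $\nu^{*}K_{\Pn}(D)$, which carries $\mcO_{\Pn}(d-n-1)$. That leaves $(d-2n)\deg(pr_{\Pn}\circ\nu)^{*}\mcO_{\Pn}(1)$ on the left. This coefficient is $\le 0$ inside the allowed range (e.g.\ $n=4,d=8$ or $n=7,d=13$). So a Voisin-type argument with one $\mcO(1)$-twist per direction only covers roughly $d\ge 2n+1$.

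What is missing is the two-stage structure of the paper's proof.

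\textbf{First stage.} When the log normal bundle $\mcN_f$ is bounded below through induced quotients, the critical piece is a quotient $\mcN_5$ of $\mcM^{d}_{\Pn}\boxtimes\mcO_{\barS}(1)$. Multiplying $\mcM^{1}_{\Pn}$ by general $P_1,P_2,\dots\in S^{d-1}V$ (the Clemens--Ran trick) shows $\mcN_5$ is generically a quotient of $s_0$ copies of $\mcM^{1}_{\Pn}\boxtimes\mcO_{\barS}(1)$. The twist is therefore only $\mcO(s_0)$, and the coefficient becomes $d-n-1-s_0$; if this is $\ge 1$, (i) follows.

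Otherwise, $d>\frac{3n+3}{2}$ together with $n\ge n_5\ge s_0+s_1$ forces $s_0-s_1\ge 2$. This produces a canonical vertical line $l_y$ through a general point. An integrability argument then shows that the divisors $D'_{\sigma'}|_{l_y}$ on the pointed lines $(l_y,x)$ are all equivalent to a fixed $E_0$ on $(l_0,p_0)$.

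\textbf{Second stage.} If $\sharp\Supp E_0\le 2$, you are in (ii). If $\sharp\Supp E_0\ge 3$, the family is lifted to the moduli space $\barR$ of lines meeting hypersurfaces in a divisor equivalent to $E_0$. A second log-normal-bundle estimate then yields (i). It uses the computation of $K_{\barR}(\gamma^{-1}\mbfD)$, together with $2\sum_i e^{(i)}-4\ge 2$ and $\frac{d(d-1)}{2}\ge 3n$.

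This is exactly where ``at most two points'' enters. The paper's closing remark shows that curves over such two-point configurations genuinely violate any bound of type (i). So the exceptional set has to come from a geometric analysis of this degenerate case, which your outline does not contain.
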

From this theorem, we can obtain an inequality which is
similar to but weaker than (\ref{eq-ffVojta}).

\begin{cor}\label{cor-ofmain}
We retain the notation described before Theorem \ref{thm-main}.
We also use the notation described before Conjecture \ref{conj-ffVojta2}.
Let $\mcA$ be an ample line bundle on $\Pn\times C$,
and let $\mcK$ be the canonical line bundle of $\Pn\times C$.
Let $r$ be a positive integer.
Assume that $d>\frac{3n+3}{2}$.
Let $\sigma\in \Sigma$ be a very general point.
\begin{enumerate}
\item[(1)]
For any $\epsilon >0$ and $c\in \mbR$,
the inequality
\begin{equation}\label{eq-corofmain}
N_{S}^{(1)}(D_{\sigma},Y)+d_{C}(Y)
\geq
\frac{1}{d-n-1}h_{\mcK(D_{\sigma})}(Y)-\epsilon h_{\mcA}(Y)-c
\end{equation}
holds for all irreducible non-vertical curves $Y$ on $\Pn\times C$, 
not contained in $\mathbf{E}_{\sigma}\cup D_{\sigma}$, such that $\deg (\pi|_{Y})\circ\nu_{Y} \leq r$,
except a bounded familiy of curves on $\Pn \times C$.
\item[(2)]
The family of curves $\tau(C)\subset \Pn\times C$,
where $\tau:C\to \Pn\times C$ is a section of $pr_{C}:\Pn\times C \to C$ such that
$\tau^{-1}(D_{\sigma})\subset S$ and $\tau(C)\not\subset\mathbf{E}_{\sigma}$,
is bounded.
\end{enumerate}
\end{cor}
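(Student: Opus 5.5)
We derive Corollary \ref{cor-ofmain} directly from Theorem \ref{thm-main}. For a non-vertical irreducible curve $Y\not\subset \mathbf{E}_{\sigma}\cup D_{\sigma}$, alternative (ii) is excluded, so (i) holds. We then translate each term of (i) into the language of Conjecture \ref{conj-ffVojta2}, with $\pi=pr_{C}$ and $e=\deg(\pi|_{Y})\circ\nu_{Y}$.

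\textbf{Translating the right-hand side of (i).} By definition,
\[
\frac{1}{e}\left\{2g(\widetilde{Y})-2+\left|\nu^{-1}(D_{\sigma})\setminus(pr_{C}\circ\nu)^{-1}(S)\right|\right\}-(2g(C)-2)=d_{C}(Y)+N^{(1)}_{S}(D_{\sigma},Y).
\]
So (i) reads
\[
h_{pr_{\Pn}^{*}\mcO(1)}(Y)\le N^{(1)}_{S}(D_{\sigma},Y)+d_{C}(Y)+c_{0}.
\]

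\textbf{Comparing heights.} We have $\mcK(D_{\sigma})=\mcO(d-n-1)\boxtimes(K_{C}\otimes\mcL)$. Hence
\[
\frac{1}{d-n-1}h_{\mcK(D_{\sigma})}(Y)=h_{\mcO(1)\boxtimes\mcO_{C}}(Y)+\frac{\deg K_{C}+\deg\mcL}{d-n-1}.
\]
The point is that for any line bundle $\mcM$ on $C$, $h_{pr_{C}^{*}\mcM}(Y)=\deg\mcM$ is constant: the map $pr_{C}\circ\nu$ has degree $e$, so its pullback has degree $e\deg\mcM$, and dividing by $e$ gives $\deg\mcM$. Therefore
\[
\frac{1}{d-n-1}h_{\mcK(D_{\sigma})}(Y)\le N^{(1)}_{S}+d_{C}+c_{0}+\text{const}.
\]
This already gives (\ref{eq-corofmain}) with $\epsilon=0$ and a fixed constant $c_{1}$. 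For given $\epsilon>0$ and $c$, note that $h_{\mcA}(Y)\ge0$ since $\mcA$ is ample. The inequality with $-\epsilon h_{\mcA}(Y)-c$ can therefore fail only when $\epsilon h_{\mcA}(Y)<c_{1}-c$, that is, when $h_{\mcA}(Y)$ is bounded.

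\textbf{Boundedness of the exceptional curves.} Since $e\le r$, the degree $\deg_{\mcA}Y\le e\,h_{\mcA}(Y)$ is bounded. So the exceptional curves form a bounded family, which proves (1).

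\textbf{Part (2).} Let $Y=\tau(C)$ with $\tau$ a section and $\tau^{-1}(D_{\sigma})\subset S$. Then $e=1$, $\widetilde{Y}\cong C$, $d_{C}(Y)=0$, and $N^{(1)}_{S}=0$. If $Y\subset D_{\sigma}$, the condition $\tau^{-1}(D_{\sigma})\subset S$ would force $C\subset S$, which is impossible; so $Y\not\subset D_{\sigma}$. Sections are non-vertical. Inequality (i) then gives
\[
\deg\tau^{*}pr_{\Pn}^{*}\mcO(1)\le c_{0}.
\]
For an ample $\mcA'=\mcO(1)\boxtimes\mcO_{C}(p)$ we get $\deg_{\mcA'}Y\le c_{0}+1$. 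Hence the family is bounded.

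The argument is formal bookkeeping. The only step needing care is checking that pullbacks from $C$ contribute constant heights and that the $\epsilon h_{\mcA}$ term absorbs the constants. This amounts to fixing the normalizations (dividing by $e$) consistently with the definitions before Conjecture \ref{conj-ffVojta2}.
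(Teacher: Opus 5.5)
Your proposal is correct and follows essentially the same route as the paper. You apply Theorem \ref{thm-main}(i) and identify its right-hand side as $N^{(1)}_{S}(D_{\sigma},Y)+d_{C}(Y)+c_{0}$. You use $\mcK(D_{\sigma})\simeq\mcO(d-n-1)\boxtimes(K_{C}\otimes\mcL)$ to see that $\frac{1}{d-n-1}h_{\mcK(D_{\sigma})}(Y)$ exceeds the $\mcO_{\Pn}(1)$-height only by the constant $\frac{2g(C)-2+l}{d-n-1}$. From this, a failure of the inequality forces $\deg\nu^{*}(\mcA|_{Y})=e\,h_{\mcA}(Y)$ to be bounded once $e\leq r$. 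Part (2) is the paper's bound $\deg(pr_{\Pn}\circ\tau)^{*}\mcO_{\Pn}(1)\leq c_{0}$ with $\mcP=\mcO_{C}(p)$, and you add the check, left implicit in the paper, that $\tau(C)\not\subset D_{\sigma}$.
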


\subsection{Outline of Proof of Theorem \ref{thm-main}}
In \cite{A24}, the author studied subvarieties of very general log projective spaces
employing a method, which uses the positivity of normal bundles,
invented by Ein (\cite{E88}, \cite{E91}), and then developed by Voisin (\cite{V96}, \cite{V98}),
Pacienza (\cite{P03}, \cite{P04}), Clemens and Ran (\cite{CR}) and Pacienza and Rousseau (\cite{PR}).

The proof of Theorem \ref{thm-main} is similar to that of \cite[Theorem 8.1]{A24};
we only repeat the argument in \cite{A24} relatively over the curve $C$.
Specifically,
if a curve $Y\subset \Pn\times C$ not contained in $D_{\sigma}$ does not satisfy the inequality
in Theorem \ref{thm-main} (i), 
then we show that for a general point $y\in Y$,
there exists a vertical line $l_{y}$ passing through $y$ that intersects $D_{\sigma}$
in at most two points.
To this end, we first show that to a general points $y\in Y$,
we can associate a vertical line $l_{y}$ passing through $y$ in such a way that
for general two points $y$ and $y'\in Y$,
the divisors $\left. D_{\sigma}\right|_{l_{y}}$ on the pointed line $(l_{y}, y)$ and
$\left. D_{\sigma}\right|_{l_{y'}}$ on the pointed line $(l_{y'}, y')$
are equivalent,
that is,
there exists an isomorphism $l_{y}\simeq l_{y'}$ of lines
mapping $y$ to $y'$ such that $\left. D_{\sigma}\right|_{l_{y}}$ and
$\left. D_{\sigma}\right|_{l_{y'}}$ correspond under the isomorphism.
Then using a moduli space of lines intersecting $D_{\sigma}$ 
``in a fixed divisor,''
we show that if $l_{y}$ intersects $D_{\sigma}$ in more than two points,
then $Y$ satisfies the inequality in Theorem \ref{thm-main} (i).

\subsection{Organization of the paper}
In Section \ref{section-preliminaries}, we recall some facts about positivity
of vector bundles and log tangent bundles.
In Section \ref{section-setting}, we state Theorem \ref{thm-core}, the core theorem of this paper,
and Proposition \ref{prop-half}.
In Section \ref{section-proofhalf},
we prove Proposition \ref{prop-half}.
Section \ref{sect-lines}
is almost a reproduction of \cite[\S 5]{A24},
in which we introduce moduli of lines intersecting a hypersurface in a fixed divisor.
In Section \ref{section-proofcore},
we prove Theorem \ref{thm-core}.
In Section \ref{section-proofofmain},
we show that Theorem \ref{thm-core}
implies Theorem \ref{thm-main},
and also prove Corollary \ref{cor-ofmain}.
In Section \ref{section-Remarks},
we give two remarks;
the first one explains why vertical lines intersecting the divisor $D_{\sigma}$ in two points appear as the exceptional locus.
The second one concerns the hypersurface version of the core theorem.

\begin{ack}
This work was supported by JSPS KAKENHI Grant Number JP22K03232 and JP26K06752.
\end{ack}

\begin{nc}
\begin{itemize}
\item The geometric genus of a projective curve $Y$ is denoted by $g(Y)$.
\item For a finite dimensional vector space $W$, we put $\mbP(W):=\mathrm{Proj}\left(\bigoplus_{d=0}^{\infty}S^{d}W\right)$
and $\mbP_{*}(W):=\mathrm{Proj}\left(\bigoplus_{d=0}^{\infty}S^{d}W^{*}\right).$
So $\mbP(W)$ parametrizes codimension one subspaces of $W$, and $\mbP_{*}(W)$ parametrizes one dimensional subspaces of $W$.
Likewise, for a locally free sheaf $\mcW$ on a scheme $Z$, 
we put $\mbP(\mcW):=\mathrm{Proj}\left(\bigoplus_{d=0}^{\infty}S^{d}\mcW\right)$
and $\mbP_{*}(\mcW):=\mathrm{Proj}\left(\bigoplus_{d=0}^{\infty}S^{d}\mcW^{*}\right).$
\item We use Weil divisors and Cartier divisors on a smooth scheme interchangeably.
An effective divisor $D=\sum_{i} m_{i}D_{i}$, $m_{i}\geq 1$ and $D_{i}$ distinct prime divisors,
is called \textit{reduced} if $m_{i}=1$ for all $i$;
and for the above $D$, we put $D_{\mathrm{red}}:=\sum_{i} D_{i}$.
For an effective divisor $D$, the subscheme defined by the ideal sheaf $\mcO(-D)$ is also denoted by the same letter $D$.
In particular, we will not distinguish a reduced effective divisor $D$ and the closed subset $\mathrm{Supp}\, D$, the support of $D$.

If $f:Z'\to Z$ is a morphism of smooth varieties, and $D$ is an effective divisor on $Z$ with $f(Z')\not\subset \mathrm{Supp}\, D$,
the pull-back, as a divisor, of $D$ by $f$ is denoted by $f^{*}D$.
When $D$ is a reduced divisor, we use the notation $f^{-1}(D)$ to mean $(f^{*}D)_{\mathrm{red}}$,
in other words, $f^{-1}(D)$ is the set-theoretic inverse of $D$ by $f$.

For reduced divisors $D$ and $E$, we write $D\cup E$ for $(D+E)_{\mathrm{red}}$,
that is, their set-theoretic union. 

\item Throughout the paper, we fix an $(n+1)$-dimensional vector space $V:=\mbC^{n+1}$,
and a projective space $\Pn=\mbP(V)$.

\item For a point $z\in \Pn$ and an integer $d\geq 0$,
we write $M_{z}^{d}$ for $\mrH^{0}(\Pn, \mcI_{z}(d))$.
For a line $l\subset\Pn$, we write $M_{l}^{d}$ for $\mrH^{0}(\Pn, \mcI_{l}(d))$.

\end{itemize}
\end{nc}

\section{Preliminaries}\label{section-preliminaries}

\subsection{Positivity}\label{subsect-posi}

We collect some facts about positivity of vector bundles.
Our reference is \cite[\S2]{CR}.

Let $Z$ be a scheme over $\mbC$, and let $\mcA$ be a vector bundle generated by global sections on $Z$.
Let $R\subset \mrH^{0}(Z,\mcA)$ be a subspace such that the natural morphism of sheaves
\[
R\otimes\mcO_{Z} \to \mcA.
\]
is surjective.
Denote by $\mcM_{Z}^{(\mcA;R)}$ the kernel of this morphism

\begin{prop}\label{prop-posi}
For an integer $i\geq 0$, 
the vector bundle $\wedge^{i}\mcM_{Z}^{(\mcA;R)}\otimes \det\mcA$ is generated by global sections.
\end{prop}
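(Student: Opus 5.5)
The plan is to realize $\wedge^{i}\mcM_{Z}^{(\mcA;R)}\otimes\det\mcA$ as a quotient of a trivial bundle by exhibiting enough explicit global sections. Put $r:=\rank\mcA$ and $\mcM:=\mcM_{Z}^{(\mcA;R)}$, so that we have the exact sequence
\[
0\to\mcM\to R\otimes\mcO_{Z}\to\mcA\to 0 .
\]
The whole statement is local on $Z$, and surjectivity of $\mrH^{0}(Z,-)\otimes\mcO_{Z}\to(-)$ can be checked on fibers at closed points $z\in Z$. So I would build global sections from elements of the exterior algebra of $R$ and then verify that their values span the fiber at each $z$.

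Fix $i\geq 0$. For each choice of $s_{1},\dots,s_{r}\in R$, I would look at the map
\[
\wedge^{i+r}R\otimes\mcO_{Z}\to\wedge^{i}\mcM\otimes\det\mcA
\]
that is induced by the natural filtration of $\wedge^{i+r}(R\otimes\mcO_{Z})$ associated with the subbundle $\mcM$. The top graded piece of this filtration, namely the quotient of the step $F^{i}$ of wedges containing at least $i$ factors from $\mcM$, is $\wedge^{i}\mcM\otimes\wedge^{r}\mcA$. The problem is that this is a subquotient and not a quotient of $\wedge^{i+r}R$. So I would instead use the dual, standard construction: contraction. We have $\det\mcA\cong\det(R\otimes\mcO)\otimes(\det\mcM)^{-1}$, and $\wedge^{i}\mcM\otimes(\det\mcM)^{-1}\cong(\wedge^{m-i}\mcM)^{*}$ with $m=\rank\mcM$. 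Hence
\[
\wedge^{i}\mcM\otimes\det\mcA\cong(\wedge^{m-i}\mcM)^{*}\otimes\det R .
\]
Because $\mcM\subset R\otimes\mcO$ is a subbundle, $\wedge^{m-i}\mcM\hookrightarrow\wedge^{m-i}R\otimes\mcO$ is a subbundle, so the dual map $\wedge^{m-i}R^{*}\otimes\mcO\to(\wedge^{m-i}\mcM)^{*}$ is surjective. Tensoring with the one-dimensional space $\det R$ gives a surjection from the trivial bundle $\wedge^{m-i}R^{*}\otimes\det R\otimes\mcO_{Z}$ onto $\wedge^{i}\mcM\otimes\det\mcA$, and this surjection proves global generation.

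The key steps, in order, are as follows. First, establish the canonical isomorphism $\det\mcA\otimes\det\mcM\cong\det R\otimes\mcO_{Z}$ from the exact sequence. Second, establish the perfect pairing isomorphism $\wedge^{i}\mcM\cong(\wedge^{m-i}\mcM)^{*}\otimes\det\mcM$. Third, use the fact that restriction of forms from a trivial bundle to a subbundle is surjective. The cases $i=0$, where $\det\mcA$ is globally generated as a quotient of $\wedge^{r}R$, and $i>m$, where the bundle is zero, are trivial and are consistent with this argument.

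The only delicate point is making sure $\mcM$ is genuinely a subbundle, so that the dual restriction map is surjective. This holds because $\mcA$ is locally free and $R\otimes\mcO_{Z}\to\mcA$ is surjective, so the sequence splits locally and $\mcM$ is locally free with locally free cokernel. Beyond that, the argument is bookkeeping of canonical isomorphisms, and no positivity input is needed.
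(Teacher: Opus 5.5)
Your proof is correct. Write $r=\rank\mcA$ and $m=\rank\mcM$ as you do. Since $\mcM\subset R\otimes\mcO_{Z}$ is locally split, restriction gives a surjection $\wedge^{m-i}R^{*}\otimes\det R\otimes\mcO_{Z}\to(\wedge^{m-i}\mcM)^{*}\otimes\det R\simeq\wedge^{i}\mcM\otimes\det\mcA$ from a trivial bundle. The two canonical isomorphisms you use are also right.

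However, your reason for abandoning the direct route is wrong, and that direct route is exactly the paper's proof. Take the filtration $F^{k}$ of $\wedge^{i+r}(R\otimes\mcO_{Z})$ by wedges with at least $k$ factors from $\mcM$. Its graded pieces are $F^{k}/F^{k+1}\simeq\wedge^{k}\mcM\otimes\wedge^{i+r-k}\mcA$, and these vanish for $k<i$ because $\wedge^{j}\mcA=0$ for $j>r$. Hence $F^{0}=F^{i}$. So $\wedge^{i}\mcM\otimes\det\mcA\simeq F^{i}/F^{i+1}$ is a genuine quotient of $\wedge^{i+r}R\otimes\mcO_{Z}$, not merely a subquotient. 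This is precisely the surjection $\wedge^{i+\rank\mcA}(R\otimes\mcO_{Z})\to\wedge^{i}\mcM\otimes\det\mcA$ that the paper's one-line proof invokes.

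In fact the two arguments produce the same map. Since $\dim R=m+r$, wedging gives an isomorphism $\wedge^{i+r}R\simeq(\wedge^{m-i}R)^{*}\otimes\det R$, $\omega\mapsto(\mu\mapsto\omega\wedge\mu)$. Under this identification, your restriction map sends $\omega$ to the functional $\mu\mapsto\omega\wedge\mu$ on $\wedge^{m-i}\mcM$. In a local splitting $R\otimes\mcO_{Z}\simeq\mcM\oplus\mcA$, only the $\wedge^{i}\mcM\otimes\wedge^{r}\mcA$-component of $\omega$ contributes to $\omega\wedge\mu$. So the map agrees, up to sign, with the projection onto $F^{i}/F^{i+1}$.

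Your dual formulation makes independence of the local splitting automatic. The filtration version avoids the duality bookkeeping, which is why the paper's proof fits in a single line.
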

\proof
This follows from the existence of 
a surjective morphism
\[
\wedge^{i+\rank\;\mcA}\left(
R\otimes\mcO_{Z}
\right)
\to \wedge^{i}\mcM_{Z}^{(\mcA;R)}
\otimes\det \mcA.
\]
\endproof

Let $\mbG$ be the Grassmannian variety parametrizing lines in $\Pn=\mbP(V)$.
Equivalently, $\mbG$ parametrizes $2$-dimensional quotients of $V$.
Let 
\begin{equation}\label{eq-univG}
0\to \mcK \to V\otimes \mcO_{\mbG}\to \mcQ \to 0
\end{equation}
be the natural sequence with $\mcQ$ the universal rank $2$ quotient.
Put $\mcO_{\mbG}(1):=\det \mcQ$, which is the ample generator of $\mathrm{Pic}\,\mbG$.

\begin{notation}\label{notation-M}
For an integer $j\geq 1$,
the vector bundles $\mcM_{\Pn}^{\left(\mcO_{\Pn}(j); S^{j}V\right)}$ 
and $\mcM_{\mbG}^{\left(S^{j}\mcQ; S^{j}V\right)}$ 
are denoted by $\mcM_{\Pn}^{j}$ and $\mcM_{\mbG}^{j}$ respectively.
\end{notation}

\begin{prop}\label{prop-Mposi}
For an integer $j\geq 1$, the vector bundles $\mcM^{j}_{\Pn}\otimes \mcO_{\Pn}(1)$ on $\Pn$
and $\mcM^{j}_{\mbG}\otimes \mcO_{\mbG}(1)$ on $\mbG$ are generated by global sections.
\end{prop}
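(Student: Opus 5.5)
The plan is to exhibit $\mcM^{j}\otimes\mcO(1)$ in each case as a quotient of a trivial bundle, or at least as a bundle whose fibres are spanned by images of global sections, using the explicit description of the kernel of $S^{j}V\otimes\mcO\to\mcO(j)$, respectively $S^{j}V\otimes\mcO\to S^{j}\mcQ$.

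\emph{Case of $\Pn$.} On $\Pn=\mbP(V)$ we have the Euler-type sequence $0\to\Omega(1)\to V\otimes\mcO\to\mcO(1)\to 0$. Write $\mcN:=\Omega_{\Pn}(1)$, the kernel of $V\otimes\mcO\to\mcO(1)$. The multiplication map $\mcN\otimes S^{j-1}V\to S^{j}V\otimes\mcO$ lands in $\mcM^{j}_{\Pn}$, and it is surjective onto $\mcM^{j}_{\Pn}$. To see surjectivity fibrewise at a point $z$, note that the fibre of $\mcM^{j}_{\Pn}$ at $z$ is $M^{j}_{z}$, the degree $j$ forms vanishing at $z$. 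These are generated by linear forms vanishing at $z$ (the fibre of $\mcN$) times arbitrary forms of degree $j-1$, since the ideal of a point is generated by linear forms. Tensoring with $\mcO(1)$, it then suffices that $\mcN(1)=\Omega(2)$ is globally generated. This is standard: $\Omega(2)$ is a quotient of $\wedge^{2}V\otimes\mcO$ via the Koszul complex. Alternatively, apply Proposition \ref{prop-posi} with $i=1$ to $(\mcA;R)=(\mcO(1);V)$, which gives that $\mcN\otimes\mcO(1)$ is globally generated. Hence $\mcM^{j}_{\Pn}(1)$, being a quotient of $\mcN(1)\otimes S^{j-1}V$, is globally generated.

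\emph{Case of $\mbG$.} The same idea applies. The map $S^{j}V\otimes\mcO_{\mbG}\to S^{j}\mcQ$ has fibre kernel at $[l]$ equal to $M^{j}_{l}$, the degree $j$ forms vanishing on the line $l$. The ideal of a line is generated by linear forms, and these linear forms vanishing on $l$ form the fibre of $\mcK$. Hence multiplication $\mcK\otimes S^{j-1}V\to\mcM^{j}_{\mbG}$ is a surjection of bundles; fibrewise this is again the statement that $(I_{l})_{j}=(I_{l})_{1}\cdot S^{j-1}V$. It remains to see that $\mcK\otimes\mcO_{\mbG}(1)=\mcK\otimes\det\mcQ$ is globally generated. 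This is exactly Proposition \ref{prop-posi} with $i=1$, $\mcA=\mcQ$ and $R=V$: indeed $\mcM^{(\mcQ;V)}_{\mbG}=\mcK$, and $\det\mcQ=\mcO_{\mbG}(1)$. Tensoring the surjection with $\mcO_{\mbG}(1)$ then gives a surjection from the globally generated bundle $\mcK(1)\otimes S^{j-1}V$ onto $\mcM^{j}_{\mbG}(1)$.

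The only point needing care, which I view as the main (though mild) obstacle, is the surjectivity of the multiplication maps as maps of vector bundles. The maps are defined globally, and their image is contained in the kernel because the composite to $\mcO(j)$ (resp.\ $S^{j}\mcQ$) factors through $\mcN\to\mcO(1)$ (resp.\ $\mcK\to\mcQ$), which is zero. Surjectivity is then checked on fibres using the fact that ideals of linear subspaces are generated in degree one. One must also check that the fibre of the kernel bundle is the fibre-wise kernel; this holds since the surjection $S^{j}V\otimes\mcO_{\mbG}\to S^{j}\mcQ$ has locally free target, so the kernel commutes with base change. The case $\Pn$ could equally be obtained from Proposition \ref{prop-posi} directly, since $\mcO(1)$ is a line bundle.
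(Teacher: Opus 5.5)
Your proposal is correct and follows the paper's proof: for $j=1$ it applies Proposition~\ref{prop-posi} with $i=1$, using $\det\mcO_{\Pn}(1)=\mcO_{\Pn}(1)$ and $\det\mcQ=\mcO_{\mbG}(1)$, and for $j\geq 2$ it uses the surjection $S^{j-1}V\otimes\mcM^{1}\to\mcM^{j}$, whose fibrewise surjectivity you justify correctly from the fact that ideals of points and lines are generated in degree one. Your closing remark should be read only for $j=1$: applying Proposition~\ref{prop-posi} directly to $(\mcO_{\Pn}(j);S^{j}V)$ would only give global generation of $\mcM^{j}_{\Pn}\otimes\mcO_{\Pn}(j)$, so the multiplication-map reduction is genuinely needed for $j\geq 2$.
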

\proof
When $j=1$, the result follows from Proposition \ref{prop-posi}.
When $j\geq 2$,
there are surjective maps $S^{j-1}V\otimes \mcM_{\Pn}^{1}\to\mcM^{j}_{\Pn}$
and $S^{j-1}V\otimes \mcM_{\mbG}^{1}\to\mcM^{j}_{\mbG}$,
so the result follows from  the case $j=1$.
\endproof

\subsection{Log tangent bundle}\label{subsect-logtan}

Let $D$ be a SNC (=simple normal crossing) divisor on a smooth variety $Z$.
The log tangent bundle $T_{Z}(-\log D)$ of the log pair $(Z,D)$ is defined by the exact sequence
\[
0\to T_{Z}(-\log D) \to T_{Z} \to \mcH om_{\mcO_{Z}}(\mcO(-D), \mcO_{D}). 
\]
If $D$ is defined locally  by $z_{1}\cdots z_{r}=0$  with analytic coordinates $z_{1}, \dots, z_{n}$,
then $T_{Z}(-\log D)$ is generated by $z_{i}\partial/\partial z_{i}$ ($1\leq i \leq r$)
and $\partial/\partial z_{j}$ ($r+1\leq j \leq n$)
as an $\mcO_{Z}$-module.

If $Y$ is another smooth variety with a SNC divisor $E$,
and $f$ is a morphism of $Z$ to $Y$ with $D=f^{-1}(E)$,
then the natural map 
$T_{Z}\to f^{*}T_{Y}$ of tangent bundles
induces a map
\begin{equation}\label{eq-logtan}
T_{Z}(-\log D)\to f^{*}T_{Y}(-\log E)
\end{equation}
 of log tangent bundles.
When the morphism $f$ is generically finite,
the map (\ref{eq-logtan}) is an injective morphism of $\mcO_{Z}$-modules,
and we define the log normal sheaf $\mcN^{\log}_{f}$ of $f$ to be the cokernel of (\ref{eq-logtan}).

Let $\pi:\mcZ\to U$ be a smooth morphism of smooth varieties.
A SNC divisor $\mcD$ on $\mcZ$ is said to be \textit{relatively SNC over $U$}
if any intersection of  irreducible components of $\mcD$ is smooth over $U$.
In this case, the restriction $D_{u}$ of $\mcD$ to the fiber $Z_{u}:=\pi^{-1}(u)$ over $u\in U$ is also
a SNC divisor.
We define the relative log tangent bundle $T_{\mcZ/U}(-\log \mcD)$
to be
\[
T_{\mcZ}(-\log \mcD) \cap T_{\mcZ/U},
\]
where the intersection is taken in $T_{\mcZ}$.
Then we have 
\begin{equation}
\left. T_{\mcZ/U}(-\log \mcD)\right|_{Z_{u}}=T_{Z_{u}}(-\log D_{u}).
\end{equation}
The short exact sequence
\[
0 \to T_{\mcZ/U} \to T_{\mcZ} \to \pi^{*}T_{U} \to 0
\]
induces a short exact sequence
\begin{equation}\label{eq-reltan}
0 \to T_{\mcZ/U}(-\log \mcD) \to T_{\mcZ}(-\log \mcD) \to \pi^{*}T_{U} \to 0
\end{equation}
because $\mcD$ is relatively SNC over $U$.
\begin{prop}\label{prop-reltan}
Let $U$ be a smooth variety, $\mcX$ and $\mcY$  varieties smooth over $U$,
$f:\mcX\to \mcY$ a morphism over $U$
and $\mcE$ a SNC divisor on $\mcY$  relatively SNC over $U$
such that $\mcD:=f^{-1}(\mcE)$ is a SNC divisor on $X$.

\noindent (1) If $\mcE$ is smooth, then there exists a short exact sequence
\[
0 \to T_{\mcY/U}(-\log \mcE) \to T_{\mcY/U} \to \left. \mcO_{\mcY}(\mcE)\right|_{\mcE} \to 0.
\]

\noindent (2) If the morphism $f$ is smooth, then $\mcD$ is relatively SNC over $U$,
and there is a short exact sequence
\[
0 \to T_{\mcX/\mcY} \to T_{\mcX/U}(-\log \mcD)
\to f^{*}T_{\mcY/U}(-\log \mcE) \to 0.
\]
If, moreover, $\mcF$ is a SNC divisor on $\mcX$ relatively SNC over $\mcY$
such that $\mcF+\mcD$ is SNC and relatively SNC over $U$,
then there is a short exact sequence
\[
0 \to T_{\mcX/\mcY}(-\log \mcF) \to T_{\mcX/U}(-\log (\mcD+\mcF))
\to f^{*}T_{\mcY/U}(-\log \mcE) \to 0.
\]

\noindent (3)
If the morphism $f$ is generically finite and $\mcD$ is relatively SNC over $S$,
then there is a short exact sequence
\[
0\to T_{\mcX/U}(-\log \mcD) \to f^{*}T_{\mcY/U}(-\log \mcE) \to \mcN_{f}^{\log} \to 0.
\]
\end{prop}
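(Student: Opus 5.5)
The three sequences are local statements about sheaves on smooth varieties, so I will prove each by working in local analytic coordinates adapted to the divisors. The relative log tangent bundle is defined as $T_{\mcZ}(-\log\mcD)\cap T_{\mcZ/U}$, and once I have explicit local frames, exactness can be checked on stalks.

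For (1), near a point of $\mcE$ I choose relative coordinates $y_1,\dots,y_m$ on $\mcY$ over $U$, together with coordinates $u$ pulled back from $U$, so that $\mcE=\{y_1=0\}$. This is possible because $\mcE$ is smooth over $U$. Then $T_{\mcY/U}(-\log\mcE)$ is generated by $y_1\partial/\partial y_1$ and $\partial/\partial y_j$ for $j\ge 2$. The map $T_{\mcY/U}\to \mcH om(\mcO(-\mcE),\mcO_{\mcE})=\mcO_{\mcY}(\mcE)|_{\mcE}$ sends $\partial/\partial y_1$ to the homomorphism $y_1\mapsto 1$, which generates the target, so it is surjective. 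Its kernel is the submodule above, which gives the sequence.

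For (2), the point is that a smooth $f$ lets me choose coordinates $(x,y,u)$ on $\mcX$ with $f(x,y,u)=(y,u)$, where the $y$ are adapted to $\mcE$ as before. Then $\mcD=f^{-1}(\mcE)$ is given by equations in the $y$ alone, so every stratum of $\mcD$ is a product with the $x$-directions and is smooth over $U$. Hence $\mcD$ is relatively SNC. The log frame of $T_{\mcX/U}(-\log\mcD)$ is the union of $\{\partial/\partial x_k\}$ and the pullback of the log frame of $T_{\mcY/U}(-\log\mcE)$. The map to $f^*T_{\mcY/U}(-\log\mcE)$ kills the $\partial/\partial x_k$ and is the identity on the pulled-back frame, so it is surjective with kernel $T_{\mcX/\mcY}$.

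For the refined version with $\mcF$, I use that $\mcF$ is relatively SNC over $\mcY$ and $\mcF+\mcD$ is SNC. This should allow me to choose the $x$-coordinates so that $\mcF=\{x_1\cdots x_s=0\}$ while $\mcD$ is still cut out by the $y$'s. Justifying this coordinate choice is the main obstacle: I would apply the relative SNC condition over $\mcY$ to make the components of $\mcF$ transversal to the fibers, then complete $y,u$ to a coordinate system. The frame is then $x_k\partial/\partial x_k$ for $k\le s$, $\partial/\partial x_k$ for $k>s$, and the $y$-part as before, and the same computation gives kernel $T_{\mcX/\mcY}(-\log\mcF)$.

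For (3), generically finite $f$ means the map \eqref{eq-logtan}, restricted to relative tangent bundles, is injective. Indeed it is an isomorphism over the dense open set where $f$ is étale and away from the divisors, and both sheaves are torsion free. Its cokernel is $\mcN^{\log}_f$, since the absolute log sequences \eqref{eq-reltan} for $\mcX$ and $\mcY$ have the same third term $\pi^*T_U$. By the snake lemma the relative and absolute cokernels then coincide, which gives the stated sequence.
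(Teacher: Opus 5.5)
Your proposal is essentially the paper's own argument. The paper just calls (1) and (2) immediate, which is the local-coordinate check you carry out. It derives (3) from the definition of $\mcN_{f}^{\log}$ together with (\ref{eq-reltan}), which is your comparison of the two log sequences with common third term $\pi^{*}T_{U}$.

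Your coordinate choice for the $\mcF$-version is fine. At a point of $F_{1}\cap\dots\cap F_{s}$, relative SNC over $\mcY$ makes $dx_{1},\dots,dx_{s}$ independent of the pulled-back $dy,du$, so these extend to a coordinate system.

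One correction in (3). The paper uses ``generically finite'' for maps that are finite onto a proper image, such as $f\colon\mcY\to X\times\Sigma'$ in \S\ref{section-setting}, whose log normal sheaf has rank $n$. For such maps $f$ is nowhere \'etale and the map is nowhere an isomorphism, so your justification of injectivity does not apply there.

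You do not need that justification. The relative map is the restriction of (\ref{eq-logtan}) to subsheaves, so it is injective because (\ref{eq-logtan}) is. That map is injective because $df$ is generically injective in characteristic zero and its source is locally free, which is exactly what the paper assumes when defining $\mcN_{f}^{\log}$.
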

\proof
The proof of (1) and (2) are immediate,
and (3) follows from the definition of $\mcN_{f}^{\log}$ and  the short exact sequence (\ref{eq-reltan}).
\endproof
\begin{prop}\label{prop-TPU}
Let $\mcU$ be a vector bundle on a smooth variety $Z$,
and let $\mcM\subset \mcU$ be a vector subbundle with $\mcQ:=\mcU/\mcM$ a line bundle.
So we have an inclusion  $\mbP_{*}(\mcM)\hookrightarrow\mbP_{*}(\mcU)$ of projective bundles over $Z$,
and $\mbP_{*}(\mcM)$ is a divisor of $\mbP_{*}(\mcU)$.
Denote by $\pi$ the projection $\mbP_{*}(\mcU)\to Z$.
Then the relative log tangent bundle $T_{\mbP_{*}(\mcU)/Z}(-\log \mbP_{*}(\mcM))$
is isomorphic to $\mcO_{\mbP_{*}(\mcU)}(1)\otimes \pi^{*}\mcM$.
\end{prop}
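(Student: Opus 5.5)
The claim is local over $Z$, and both sides are compatible with base change. So I would first work fibrewise and then globalize by checking that the fibrewise identifications are canonical. The comparison will go through Proposition \ref{prop-reltan}(1). Write $P:=\mbP_{*}(\mcU)$ and $H:=\mbP_{*}(\mcM)$. Then $H$ is smooth and relatively SNC over $Z$, since it is a subbundle, and we get
\[
0\to T_{P/Z}(-\log H)\to T_{P/Z}\to \left.\mcO_{P}(H)\right|_{H}\to 0 .
\]
The plan is to realize this sequence as a quotient of the relative Euler sequence.

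The relative Euler sequence on $P$ reads
\[
0\to \mcO_{P}\to \pi^{*}\mcU\otimes\mcO_{P}(1)\to T_{P/Z}\to 0 .
\]
Here $\mcO_{P}(-1)\subset \pi^{*}\mcU$ is the tautological line subbundle, under the convention that $\mbP_{*}$ parametrizes lines. The divisor $H$ is the zero locus of the composite $\mcO_{P}(-1)\to\pi^{*}\mcU\to\pi^{*}\mcQ$. Hence $\mcO_{P}(H)\cong \mcO_{P}(1)\otimes\pi^{*}\mcQ$.

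Now consider the subsheaf $\pi^{*}\mcM\otimes\mcO_{P}(1)\subset \pi^{*}\mcU\otimes\mcO_{P}(1)$. Its quotient is $\pi^{*}\mcQ\otimes\mcO_{P}(1)\cong\mcO_{P}(H)$. The image of $\mcO_{P}$ in $\pi^{*}\mcU\otimes \mcO_{P}(1)$ is the tautological section. Its image in $\mcO_{P}(H)$ is exactly the section cutting out $H$. So the composite map $\pi^{*}\mcU\otimes\mcO_{P}(1)\to \mcO_{P}(H)\to \left.\mcO_{P}(H)\right|_{H}$ kills $\mcO_{P}$, and therefore factors through $T_{P/Z}$.

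I would then show two things. First, this induced map $T_{P/Z}\to\left.\mcO_{P}(H)\right|_{H}$ coincides, up to a unit, with the map in Proposition \ref{prop-reltan}(1), that is, with $v\mapsto v(s)|_{H}$ where $s$ is the local equation of $H$. Second, the kernel of $\pi^{*}\mcU\otimes\mcO_{P}(1)\to \left.\mcO_{P}(H)\right|_{H}$ is $\pi^{*}\mcM\otimes\mcO_{P}(1)+\mcO_{P}$.

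Granting these, $T_{P/Z}(-\log H)$ equals the image of $\pi^{*}\mcM\otimes\mcO_{P}(1)+\mcO_{P}$ in $T_{P/Z}$. The intersection $\mcO_{P}\cap \pi^{*}\mcM\otimes\mcO_{P}(1)$ vanishes as a sheaf: $\mcO_{P}$ is a subbundle of $\pi^{*}\mcU\otimes\mcO_{P}(1)$, and the tautological section lies in $\pi^{*}\mcM\otimes\mcO_{P}(1)$ only over $H$, a proper closed subset, while both sheaves are torsion free. Consequently $\pi^{*}\mcM\otimes\mcO_{P}(1)$ maps isomorphically onto $T_{P/Z}(-\log H)$. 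This map is injective, and it is surjective onto the log tangent sheaf by the kernel description above.

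The main obstacle is the identification of the two maps to $\left.\mcO_{P}(H)\right|_{H}$, i.e. checking that the Euler-sequence map is the derivative map $v\mapsto v(s)$. I would do this in local coordinates. Trivialize $\mcU=\mcM\oplus\mcQ$ locally on $Z$, with fibre coordinates $x_{0},\dots,x_{r}$ such that $H=\{x_{r}=0\}$. In the chart $x_{0}\neq 0$, the relative log tangent sheaf is generated by $\partial/\partial(x_{i}/x_{0})$ for $i<r$, together with $(x_{r}/x_{0})\,\partial/\partial(x_{r}/x_{0})$. These are precisely the images of $x_{i}\partial_{x_{j}}$ with $j<r$, modulo the Euler field; those images are the images of $\pi^{*}\mcM\otimes\mcO_{P}(1)$. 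This coordinate check also proves the kernel claim directly, and the canonical nature of the construction guarantees that it glues.
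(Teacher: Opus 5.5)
Your proposal is correct and takes essentially the paper's route. Both arguments use the relative Euler sequence and the subsheaf $\mcK=\Ker\bigl(\pi^{*}\mcU\to\pi^{*}\mcQ|_{\mbP_{*}(\mcM)}\bigr)$, which is your $\pi^{*}\mcM\otimes\mcO_{P}(1)+\mcO_{P}$ untwisted; both split it as $\pi^{*}\mcM\oplus\mcO(-1)$ and identify its quotient by the tautological line with the twisted log tangent bundle. The one difference is how the Euler-induced map is matched with the normal-bundle map: the paper applies the snake lemma to the Euler sequences of $\mbP_{*}(\mcM)$ and $\mbP_{*}(\mcU)$, whereas you verify it in local coordinates.
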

\proof
From the Euler sequences on $\mbP_{*}(\mcU)$ and $\mbP_{*}(\mcM)$,
we obtain the following commutative diagram of sheaves on $\mbP_{*}(\mcM)$ : 
\begin{equation*}
\xymatrix{
0 \ar[r] & \mcO_{\mbP_{*}(\mcM)}(-1) \ar[r] \ar@{=}[d] 
& \left.\pi^{*}\mcM\right|_{\mbP_{*}(\mcM)} \ar[r] \ar[d] 
&T_{\mbP_{*}(\mcM)/Z}(-1) \ar[d] \ar[r] & 0 \\
0 \ar[r] & \mcO_{\mbP_{*}(\mcM)}(-1) \ar[r] 
& \left.\pi^{*}\mcU\right|_{\mbP_{*}(\mcM)} \ar[r] 
&\left.T_{\mbP_{*}(\mcU)/Z}(-1)\right|_{\mbP_{*}(\mcM)} \ar[r] & 0. 
}
\end{equation*}
By the snake lemma, we have an isomorphism
\begin{equation}\label{eq-QN}
\left.\pi^{*}\mcQ\right|_{\mbP_{*}(\mcM)} \stackrel{\sim}{\to} N_{\mbP_{*}(\mcM)/\mbP_{*}(\mcU)}(-1).
\end{equation}
Let $\mcK:=\Ker\left(
\pi^{*}\mcU\to \left.\pi^{*}Q\right|_{\mbP_{*}(\mcM)}
\right).$
Then $\pi^{*}\mcM \subset \mcK$, and there is an exact sequence
\begin{equation}\label{eq-MKQ}
0 \to \pi^{*}\mcM \to \mcK
\to \pi^{*}Q\left(-\mbP_{*}(\mcM)\right) \to 0.
\end{equation}
The subsheaf $\mcO_{\mbP_{*}(\mcU)}(-1) \subset \pi^{*}\mcU$ 
is a subsheaf of $\mcK$, and the composite
$\mcO_{\mbP_{*}(\mcU)}(-1) \to\mcK\to \pi^{*}Q\left(-\mbP_{*}(\mcM)\right)$
is an isomorphism. 
So the exact sequence (\ref{eq-MKQ}) splits and we have 
\begin{equation}\label{eq-KMO-1}
\mcK=\pi^{*}\mcM\oplus \mcO_{\mbP_{*}(\mcU)}(-1).
\end{equation}
By the definition of $\mcK$, there is a commutative diagram
\[
\xymatrix{
0 \ar[r] & \mcK \ar[r] \ar[d] & \pi^{*}\mcU \ar[r] \ar[d] 
& \left.\pi^{*}\mcQ\right|_{\mbP_{*}(\mcM)} \ar[r] \ar[d] & 0 \\
0 \ar[r] &T_{\mbP_{*}(\mcU)/Z}\left(-\log \mbP_{*}(\mcM)\right)(-1) \ar[r] & T_{\mbP_{*}(\mcU)/Z} (-1) \ar[r]
& N_{\mbP_{*}(\mcM)/\mbP_{*}(\mcU)}(-1) \ar[r] & 0.
}
\]
Since the right vertical arrow is an isomorphism by (\ref{eq-QN}),
we have, by the snake lemma,
an exact sequence
\begin{equation}\label{eq-OKT}
0\to \mcO_{\mbP_{*}(\mcU)}(-1) \to \mcK
\to T_{\mbP_{*}(\mcU)/Z}\left(-\log \mbP_{*}(\mcM)\right)(-1) \to 0. 
\end{equation}
The desired isomorphism follows from (\ref{eq-KMO-1}) and (\ref{eq-OKT}).
\endproof

\section{Setting}\label{section-setting}

We retain the notation used in Section \ref{subsection-MT}.
But we  put $X:=\Pn\times C$ (instead of $\mcX$).

As in Theorem \ref{thm-main},
we fix a finite subset $S$ of $C$.
Put $\mcS:=\Pn\times S$, which is a reduced divisor on $X$.

Let $\Sigma^{sm}$ be the Zariski open subscheme of $\Sigma$
consisting of points $\sigma\in \Sigma$ such that the corresponding divisors
$D_{\sigma}\subset X$ are smooth and
the divisors $\mcS + D_{\sigma}\subset X$ are SNC.
Let $\mcD\subset X \times\Sigma^{sm}$ be the universal family of divisors $D_{\sigma}$
parametrized by $\Sigma^{sm}$.
We denote by $p$ the projection $X \times\Sigma^{sm}\to \Sigma^{sm}$,
and by $h$ the projection $X \times\Sigma^{sm}\to X$.

The general linear group $\GLV$ naturally acts on $\Pn=\mbP(V)$,
and by letting $\GLV$ act on trivially on $C$, we obtain a $\GLV$-action on $X(=\Pn\times C)$.
We have a natural induced action of $\GLV$ on  $\Sigma^{sm}$.
We also obtain induced $\GLV$-actions on $X\times\Sigma^{sm}$ and $\mcD$.
With these actions,   the projections $p$ and $h$ are $\GLV$-equivariant.

Let $\Sigma'$ be a smooth variety with a $\GLV$-action, and $\beta:\Sigma'\to \Sigma^{sm}$ a $\GLV$-equivariant smooth morphism.
Put $\mcD':=\mcD\times_{\Sigma^{sm}}\Sigma'\subset X\times\Sigma'$,
the base change of $\mcD$ by the morphism $\beta$.
Put $\beta':=\id_{X}\times \beta:X \times\Sigma'\to X\times\Sigma^{sm}$.
Let $\mcY$ be a smooth variety with a $\GLV$-action,  
and $f:\mcY\to X\times \Sigma'$ be a $\GLV$-equivariant finite morphism
such that the composite $p_{1}:=p'\circ f:\mcY\to \Sigma'$ is a smooth projective morphism
with connected $1$-dimensional fibers,
where $p':X \times \Sigma'\to \Sigma'$ is a projection.
We assume that the image of $f$ is not contained in $\mcD'$.
Then set-theoretic inverse image $\mcD_{\mcY}:=f^{-1}(\mcD')\subset \mcY$ 
of $\mcD'$ to $\mcY$  is a divisor on $\mcY$, and we assume it is \'{e}tale over $\Sigma'$.
The situation is summarized in the diagram (\ref{eq-setting}).
\begin{equation}\label{eq-setting}
\xymatrix@R=15pt@C=33pt{
\mcD_{\mcY}  \ar@{}[d]|{\bigcap} &  \mcD'  \ar@{}[d]|{\bigcap}     &  \mcD  \ar@{}[d]|{\bigcap}   &   &      \\
\mcY  \ar[r]^-{f}  \ar[rd]_-{p_{1}}  &  X\times\Sigma'  \ar[r]^-{\beta'} \ar[d]^-{p'}
& X\times\Sigma^{sm} \ar[d]^-{p} \ar[r]^-{h} &  X=\Pn\times C \ar[r]^-{pr_{\Pn}} \ar[rd]_-{pr_{C}} & \Pn \\
                    & \Sigma'  \ar[r]^-{\beta}                  & \Sigma^{sm}                       &   & C
}
\end{equation}
For a point $\sigma'\in \Sigma'$, we denote by $Y_{\sigma'}$ the fiber $p_{1}^{-1}(\sigma')$ of $p_{1}$ over $\sigma'$,
and put $D_{Y_{\sigma'}}:=\mcD_{\mcY}\mid_{Y_{\sigma'}}$
and $D'_{\sigma'}:=\mcD'|_{X\times\{\sigma'\} }$.
The restriction of the morphism $f$ to the fiber over $\sigma'$
is denoted by $f_{\sigma'}:Y_{\sigma'}\to X(=X\times\{\sigma'\} )$.

We assume that for all $\sigma'\in \Sigma'$, the composite $pr_{C}\circ f_{\sigma'}:Y_{\sigma'}\to C$ is a finite morphism,
and we denote its degree by $\deg(Y_{\sigma'}/C)$.
The  line bundle $(pr_{\Pn}\circ f_{\sigma'})^{*}\mcO_{\Pn}(1)$ on $Y_{\sigma'}$
is denoted shortly by $\mcO_{\Pn}(1)|_{Y_{\sigma'}}$.

Put 
$\mcS_{\Sigma^{sm}}:=\mcS\times\Sigma^{sm}$,
$\mcS_{\Sigma'}:=\mcS\times\Sigma'$
and $\mcS_{\mcY}:=f^{-1}(\mcS_{\Sigma'})$.
The divisor $\mcD'+ \mcS_{\Sigma'}$ on $X\times\Sigma'$ is SNC and relatively SNC over $\Sigma'$.
We assume that the diviosr $\mcD_{\mcY}\cup \mcS_{\mcY}(=(\mcD_{\mcY}+ \mcS_{\mcY})_{\mathrm{red}}$) on $\mcY$ is SNC
and relatively SNC over $\Sigma'$.
We write $\mcS_{Y_{\sigma'}}$ for $\mcS_{\mcY}|_{Y_{\sigma'}}$.

With this setting, we have the following theorem.
\begin{thm}\label{thm-core}
Assume that $d>\frac{3n+3}{2}$.
Then either (i) or (ii) of the following holds:
\begin{enumerate}
 \item[(i)] For general $\sigma'\in\Sigma'$, the inequality
\begin{equation}\label{eq-ineqcore}
\frac{1}{e}\deg\mcO_{\Pn}(1)|_{Y_{\sigma'}} 
 \leq \frac{1}{e}
\left\{
2g(Y_{\sigma'})-2
+\left|
f_{\sigma'}^{-1}(D'_{\sigma'}\setminus \mcS)
\right|
\right\} 
 -(2g(C)-2)+c_{0}
\end{equation}
holds, where $e=\deg(Y_{\sigma'}/C)$ and $c_{0}=l(n-1)+\max\{0,2g(C)-2+|S|-l\}$.
 \item[(ii)] For a general $y\in \mcY$, 
denoting the point $f(y)\in X\times\Sigma'$ by $(x,\sigma')$,
we can find a canonically determined vertical line $l_{y}$ in $X(=\Pn\times C)$
such that $l_{y}$ passes through the point $x$ and intersects $D'_{\sigma'}$ in at most two points.
\end{enumerate}
\end{thm}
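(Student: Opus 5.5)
The plan is the Ein--Voisin--Clemens--Ran approach: positivity of the log normal bundle, carried out relatively over $C$ and with $\mcS$ included in the boundary. Everything is computed on a general fiber $Y:=Y_{\sigma'}$. By Proposition \ref{prop-reltan}(3), applied to $f$ with boundaries $\mcD_{\mcY}\cup\mcS_{\mcY}$ and $\mcD'+\mcS_{\Sigma'}$ relative to $\Sigma'$, we get on $Y$ an exact sequence
\[
0\to T_{Y}(-\log (D_{Y}\cup \mcS_{Y}))\to f_{\sigma'}^{*}T_{X}(-\log(D'_{\sigma'}+\mcS))\to \mcN^{\log}_{f}|_{Y}\to 0 .
\]
The degree of the left term is $-(2g(Y)-2+|f_{\sigma'}^{-1}(D'_{\sigma'}\cup \mcS)|)$. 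The quantity $|f^{-1}_{\sigma'}(\mcS)|$ is at most $e|S|$, and this produces the $|S|$ inside $c_{0}$.

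Next I bring in the $\GLV$-action and the family. Since $\beta$ is smooth and equivariant and $\Sigma$ is built from $\mrH^{0}(\Pn,\mcO(d))\otimes W$, the family $\mcD\subset X\times \Sigma^{sm}$ has a large relative tangent space. Following Clemens--Ran and \cite{A24}, I would show that $T_{X\times\Sigma'/\Sigma'}(-\log(\mcD'+\mcS_{\Sigma'}))$, restricted to $\mcD'$ and twisted appropriately, receives a map from $T_{X\times \Sigma'}(-\log)$ that is surjective onto the vertical part. For this I use the evaluation sequence with kernel $\mcM^{d}_{\Pn}\otimes W$ (or its relative variant). Using the $\GLV$-equivariance and the standard argument that vertical vector fields coming from $\Sigma'$ lift through $f$, one gets a generically surjective map
\[
\bigl(\mcM^{d}_{\Pn}\otimes \mcL\text{-type kernel}\bigr)|_{Y}\oplus(\text{vertical fields})\to \mcN^{\log}_{f}|_{Y}.
\]
By Proposition \ref{prop-Mposi}, $\mcM^{j}_{\Pn}(1)$ is globally generated. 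The target is $pr_{\Pn}^{*}T_{\Pn}(-\log D)\oplus pr_{C}^{*}T_{C}(-\log S)$. By Proposition \ref{prop-posi} applied with $\mcL$ globally generated by $W$, the $C$-direction twist costs at most $\max\{0, 2g(C)-2+|S|-l\}$ per unit of degree of $Y/C$, and the $\Pn$-direction costs the factor $l(n-1)$. Taking determinants and top exterior powers then gives a lower bound on $\deg \mcN^{\log}_{f}|_{Y}$ of the form
\[
\deg\mcN^{\log}_{f}|_{Y}\ \geq\ (d-\tfrac{3n+3}{2}+\cdots)\deg \mcO_{\Pn}(1)|_{Y}-e\,c_{0}.
\]
Combined with the sequence above, this yields (\ref{eq-ineqcore}) after dividing by $e$ and using $\deg pr_{C}^{*}T_{C}=-e(2g(C)-2)$.

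The hypothesis $d>\frac{3n+3}{2}$ enters at the point where this positivity fails. The argument only goes through if the image of the lifted vector fields in $\mcN^{\log}_{f}|_{Y}$ generically spans. So the dichotomy is the following. If the spanning holds, we get (i). If it fails, then for general $y$ the non-spanning subspace singles out a tangent direction at $x$, and this direction determines a vertical line $l_{y}$ through $x$. At this point I would invoke the analysis of \cite{A24} transported relatively over $C$: the lines $l_{y}$ satisfy that $D'_{\sigma'}|_{l_{y}}$, viewed on the pointed line $(l_{y},x)$, is constant in moduli along $Y$. The forward-referenced moduli of lines meeting the hypersurface in a fixed divisor (\S\ref{sect-lines}) then shows the following: if $l_{y}$ met $D'_{\sigma'}$ in at least three points, this moduli space has too small a dimension for $d>\frac{3n+3}{2}$ to contain the resulting family, which is a contradiction, or else it again forces (i). That leaves case (ii).

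I expect the main obstacle to be this last step: constructing the canonical line $l_{y}$ from the failure of generic generation, and bounding the dimension of the space of lines whose restricted divisor, with three or more distinct points, has fixed moduli. The count against $d>\frac{3n+3}{2}$ has to be done carefully with the $\mcM^{d}_{\mbG}$ bundle on the Grassmannian. The expected codimension is about $d+1-3$ conditions per line, set against $2n-2$ dimensions of lines together with the point.
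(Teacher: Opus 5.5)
\textbf{Main gap: the case where $l_y$ meets $D'_{\sigma'}$ in three or more points.} Here you hope to conclude by a dimension count on pointed lines whose restricted divisor has fixed moduli. That count is $(2n-1)-(d-2)=2n-d+1$; note $\dim\mbL=2n-1$, not $2n-2$. So $\barR_{C\times\Sigma'}$ has relative dimension $2n-d+2$ over $\Sigma'$, which is $\geq 1$ whenever $d\le 2n+1$. For every $n\ge 2$ this is compatible with $d>\frac{3n+3}{2}$ (e.g.\ $n=2$, $d=5$). So the curves $Y_{\sigma'}$ can map into this space and no contradiction arises: in this case you must prove the degree inequality (i) itself.

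The paper does this by a second log-positivity argument on the smooth compactification $\barR$ of Section~\ref{sect-lines}:
\begin{enumerate}
\item It lifts $f$ to $\tilf:\mcY\to\barR_{C\times\Sigma'}$ and uses that $\gamma^{-1}\mbfD=I_{\barR}+(\sum_i J^{(i)}_{\barR})_{\mathrm{red}}$ is SNC.
\item It computes the log canonical bundle (Proposition~\ref{prop-KRIJ}, Corollary~\ref{cor-ggen}). Its degree on $Y_{\sigma'}$ is at least $(\frac{(d-1)(d-2)}{2}-n)\deg\mcO_{\mbG}(1)|_{Y_{\sigma'}}+(2\sum_i e^{(i)}-4)\deg\mcO_{\Pn}(1)|_{Y_{\sigma'}}-el(\rank\,\mcU+1)$.
\item It bounds $\deg\mcN_{\tilf}|_{Y_{\sigma'}}$ from below via global generation of $\mcM^{d}_{\mbG}\otimes\mcO_{\mbG}(1)$, at a cost of $(2n-d+1)\deg\mcO_{\mbG}(1)|_{Y_{\sigma'}}$, together with the $C$-directions.
\item It adds the two bounds by adjunction on $Y_{\sigma'}$.
\end{enumerate}
The three-point hypothesis enters exactly as $2\sum_i e^{(i)}-4\ge 2$, the positive coefficient of $\deg\mcO_{\Pn}(1)|_{Y_{\sigma'}}$. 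The size of $d$ enters as $\frac{d(d-1)}{2}-3n\ge 0$. Nothing in your outline can replace this computation.

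\textbf{Second gap: the first-stage lower bound is not justified.} $\mcM^{d}_{\Pn}$ is only globally generated after twisting by $\mcO_{\Pn}(1)$. So a generically surjective map from it onto the rank-$n_5$ quotient gives only $-n_5\deg\mcO_{\Pn}(1)|_{Y_{\sigma'}}$, which closes the argument only when $d\ge 2n+2$. No coefficient like $d-\frac{3n+3}{2}$ comes out of taking determinants.

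The paper uses the Clemens--Ran device instead. It multiplies $\mcM^{1}_{\Pn}$ by general $P_1,P_2,\dots\in S^{d-1}V$ and lets $s_0$ be the number needed to generically generate that quotient. Its degree is then $\ge n_5el-s_0\deg\mcO_{\Pn}(1)|_{Y_{\sigma'}}$, and the proof splits according to whether $d-n-1-s_0\ge 1$, which gives (i), or not.

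In the second case, $d>\frac{3n+3}{2}$ is exactly what yields $s_0-s_1\ge 2$. Otherwise $n\ge n_5\ge s_0+s_1\ge 2s_0-1$, and together with $s_0\ge d-n-1$ this forces $d\le\frac{3n+3}{2}$. The inequality $s_0-s_1\ge 2$ makes $l_y$ independent of the $P_i$. It also gives an integrable distribution of rank $\dim\mcY-n-d+s_0$ along which the pointed line and the restricted divisor are constant. Hence the restricted divisors over a fixed pointed line form a family of dimension $\le d-n-s_0+1\le 2$. So the pointed divisors are equivalent as $y$ ranges over all of $\mcY$, not merely along one fibre. This is what lets you fix a single $E_0$ and a single $\barR$.

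A minor point: the $l(n-1)$ in $c_0$ comes from the $\mcM^{(\mcL;W)}_C\otimes S^dV$ directions, not from the $\Pn$-direction.
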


To prove Theorem \ref{thm-core},
we first prove Proposition \ref{prop-half} below.
To state the proposition, we define equivalence of divisors on pointed lines.
Let $(l_{i}, z_{i})$, $i=1,2$, be pointed lines,
and $D_{i}$ be a divisor on $l_{i}$.
We say that the divisors $D_{1}$ on the pointed line $(l_{1},z_{1})$
and $D_{2}$ on $(l_{2},z_{2})$ are equivalent
if there is an isomorphism $\varphi:l_{1}\to l_{2}$ of lines
such that $\varphi(z_{1})=z_{2}$ and $\varphi(D_{1})=D_{2}$.

\begin{prop}\label{prop-half}
In the situation described before Theorem \ref{thm-core},
assume that $d>\frac{3n+3}{2}$.
Then either (i) or (ii) of the following holds:
\begin{enumerate}
 \item[(i)] For general $\sigma'\in\Sigma'$, the inequality
\begin{equation}\label{eq-half1}
\frac{1}{e}\deg\mcO_{\Pn}(1)|_{Y_{\sigma'}} 
 \leq \frac{1}{e}
\left\{
2g(Y_{\sigma'})-2
+\left|
f_{\sigma'}^{-1}(D'_{\sigma'}\setminus \mcS)
\right|
\right\} 
 -(2g(C)-2)+c_{0}
\end{equation}
holds, where $e=\deg(Y_{\sigma'}/C)$ and $c_{0}=l(n-1)+\max\{0,2g(C)-2+|S|-l\}$.
 \item[(ii)] To a general $y\in \mcY$, 
denoting the point $f(y)\in X\times\Sigma'$ by $(x,\sigma')$,
we can associate a canonically determined vertical line $l_{y}$ in $X(=\Pn\times C)$
which passes through the point $x$ in such a way that
the divisors $D'_{\sigma'}|_{l_{y}}$ on the pointed lines $(l_{y}, x)$ are equivalent as a general point $y$ varies in $\mcY$.
\end{enumerate}
\end{prop}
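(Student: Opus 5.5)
We will follow the Ein--Voisin--Pacienza--Clemens--Ran strategy, carried out relatively over $C$ and with log tangent bundles. The comparison runs through the log normal sheaf of $f$, using the $\GLV$-action to produce many sections and the positivity results of Section \ref{section-preliminaries} to control what is left.

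\textbf{Step 1: a log normal sheaf and its generic rank.} Consider the divisor $\mcD'+\mcS_{\Sigma'}$ on $X\times\Sigma'$ and its pull-back $\mcD_{\mcY}\cup\mcS_{\mcY}$ on $\mcY$. By Proposition \ref{prop-reltan}(3), applied over $\Sigma'$, we get
\[
0\to T_{\mcY/\Sigma'}(-\log(\mcD_{\mcY}\cup\mcS_{\mcY}))\to f^{*}T_{X\times\Sigma'/\Sigma'}(-\log(\mcD'+\mcS_{\Sigma'}))\to \mcN^{\log}_{f}\to 0 .
\]
Here $T_{X\times\Sigma'/\Sigma'}$ is just $T_{X}$ pulled back. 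Restricting to $Y_{\sigma'}$ gives $T_{Y_{\sigma'}}(-\log(D_{Y_{\sigma'}}\cup\mcS_{Y_{\sigma'}}))\to f_{\sigma'}^{*}T_{X}(-\log(D'_{\sigma'}+\mcS))$.

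We then use the projection $pr_{C}$. By Proposition \ref{prop-reltan}(2), the vertical part $T_{X/C}(-\log D'_{\sigma'})$ (the log tangent of $(\Pn,D)$ fibrewise) sits in an exact sequence with $pr_{C}^{*}T_{C}(-\log S)$.

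\textbf{Step 2: global generation from the $\GLV$-action and the universal family.} Equivariance of $f$ and $\beta$, together with smoothness of $\beta$, gives a surjection
\[
\mathfrak{gl}(V)\otimes\mcO\oplus T_{\mcY}\to f^{*}T_{X\times\Sigma'}(-\log\mcD').
\]
Following \cite[\S\S 3--4]{A24}, we express $T_{X\times\Sigma^{sm}}(-\log\mcD)$ relative to $\Sigma^{sm}$ via the kernel bundles $\mcM^{d}_{\Pn}\otimes W$ of Notation \ref{notation-M}. Then Proposition \ref{prop-Mposi} and Proposition \ref{prop-posi} show that a suitable twist, namely the vertical log tangent bundle tensored with $\mcO_{\Pn}(1)$ (more precisely $\wedge^{i}$ of it tensored with $\mcO_{\Pn}(1)$, and with $\mcL$ entering only through $W$), is generated by the images of the $\mathfrak{gl}(V)$-fields and of $T_{\Sigma'}$. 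This yields a generically generated subsheaf of $\mcN^{\log}_{f}|_{Y_{\sigma'}}\otimes\mcO_{\Pn}(1)$.

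\textbf{Step 3: dichotomy by rank.} Fibrewise, the image of $T_{\Sigma'}$ in $\mcN^{\log}_{f}|_{Y_{\sigma'}}$ has some generic rank $r$.

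If these sections, combined with the vertical directions, are generically surjective onto a rank-one quotient of the vertical part, we take determinants in Step 1. This gives a lower bound for $\deg$ of the log canonical bundle of $Y_{\sigma'}$. That log canonical degree is $2g(Y_{\sigma'})-2+|f_{\sigma'}^{-1}(D'_{\sigma'}\setminus\mcS)|+|\mcS_{Y_{\sigma'}}|$, and we compare it with $e(2g(C)-2+|S|)$ coming from $pr_{C}^{*}T_{C}(-\log S)$. The inequality to aim for is: $\deg\mcO_{\Pn}(1)|_{Y_{\sigma'}}$ is at most this log canonical degree minus $e\cdot\deg$ of the $C$-part, up to an error $e\,c_{0}$. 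The $(d-\tfrac{3n+3}{2})$ margin is used to absorb the twist: with $i\le n-1$ and twist $\mcO(1)$, the relevant bundle $K_{\Pn}(D)=\mcO(d-n-1)$ must beat roughly half of $n+1$ twists. We expect the constant $c_{0}=l(n-1)+\max\{0,2g(C)-2+|S|-l\}$ to come from bounding $|\mcS_{Y_{\sigma'}}|$ and the contribution of $W$ (with $l$ presumably tied to $\deg\mcL$) when dividing by $e$. This gives (i).

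Otherwise the sections from $\GLV$ and $\Sigma'$ fail generically to move $Y_{\sigma'}$ in the vertical log directions. Then the kernel at a general point $y$ singles out a direction in $T_{\Pn,x}$, hence a vertical line $l_{y}$ through $x$. The corresponding infinitesimal deformations preserve $(l_{y},x,D'_{\sigma'}|_{l_{y}})$ up to projective equivalence, since the directions that are not killed move $D$ along the line only through $\mathrm{PGL}_{2}$. Integrating along $\mcY$, which is irreducible, gives that all pointed divisors $(l_{y},x,D'_{\sigma'}|_{l_{y}})$ are equivalent. This is (ii).

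The main obstacle is the positivity computation in Step 3. Concretely, we must show that the kernel bundle on the Grassmannian, $\mcM^{d}_{\mbG}$ twisted by $\mcO_{\mbG}(1)$, produces enough sections, so that degenerate rank forces the line to be canonical. We must also track the $C$-direction and $S$ so as to get exactly $c_{0}$. We would first try to copy \cite[Theorem 8.1]{A24} verbatim with $T_{C}(-\log S)$ adjoined, and treat the $\max\{0,\cdot\}$ by splitting according to whether $pr_{C}^{*}T_{C}(-\log S)$ is negative enough.
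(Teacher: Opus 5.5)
Your outline has the right architecture: an upper bound for $\deg\mcN^{\log}_{f}|_{Y_{\sigma'}}$ from the adjunction sequence, a lower bound from positivity, and lines in the degenerate case. But the mechanism that drives both halves of the dichotomy is missing, and without it your positivity input does not reach $d>\frac{3n+3}{2}$.

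Propositions \ref{prop-posi} and \ref{prop-Mposi} do not make $\wedge^{i}$ of the vertical log tangent bundle generated after twisting by $\mcO_{\Pn}(1)$ alone. The relevant piece is $\mcM^{d}_{\Pn}\boxtimes\mcO_{\barS}(1)$, and for it you only know that $\mcM^{d}_{\Pn}\otimes\mcO_{\Pn}(1)$ or $\wedge^{i}\mcM^{d}_{\Pn}\otimes\mcO_{\Pn}(d)$ is generated. So taking the determinant of a rank-$n_5$ quotient costs $n_5$ twists, which gives (i) only for $d\ge 2n+2$.

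The paper proceeds as follows.
\begin{itemize}
\item It works relative to $\Pn$; the map $\mcY\to\Pn$ is smooth by $\GLV$-homogeneity.
\item It filters $f^{*}T_{X\times\Sigma'/\Pn}(-\log(\mcD'\cup\mcS_{\Sigma'}))$ into the pieces $T_{\Sigma'/\Sigma^{sm}}$, $\mcM_{C}^{(\mcL;W)}\otimes S^{d}V$, $T_{C}(-S)$ and $\mcM^{d}_{\Pn}\boxtimes\mcO_{\barS}(1)$.
\item For the induced quotient $\mcN_5$ of the last piece it introduces the integer $s_0$: the number of general $P_i\in S^{d-1}V$ needed for $\sum_{i\le s_0}P_i\cdot\mcM^{1}_{\Pn}$ to surject generically onto $\mcN_5$.
\item Since $\wedge^{a}\mcM^{1}_{\Pn}\otimes\mcO_{\Pn}(1)$ is generated, this gives $\deg\mcN_5|_{Y_{\sigma'}}\ge n_5el-s_0\deg\mcO_{\Pn}(1)|_{Y_{\sigma'}}$.
\item Hence (i) follows whenever $d-n-1-s_0\ge 1$. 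The constant $c_0$ comes from the $-l$ per rank in the $\mcM_{C}^{(\mcL;W)}$-piece and the rank-$\le 1$ piece $T_C(-S)$.
\end{itemize}
So the correct dichotomy is $s_0\le d-n-2$ versus $s_0\ge d-n-1$, not whether certain sections surject onto a rank-one quotient.

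Separately, splitting $pr_{C}^{*}T_{C}(-\log S)$ off $T_X(-\log(D'_{\sigma'}+\mcS))$ via Proposition \ref{prop-reltan}(2) requires $D'_{\sigma'}$ to be smooth over $C$. This fails at every $c$ where the fibre hypersurface is singular. The paper splits off $T_C(-S)$ only on $\Pn\times C\times\barS$ relative to $\Pn$, where the divisor is relatively SNC.

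In the degenerate case the hypothesis $d>\frac{3n+3}{2}$ enters precisely through $s_0$. From $n\ge n_5\ge s_0+s_1$ and $s_0\ge d-n-1$ one gets $s_0-s_1\ge 2$. Only then does the Clemens--Ran argument show that the codimension-one kernel of multiplication by $P_{s_1+1}$, from $M^{1}_{z}$ to $M^{d}_{z}/(\mathrm{Im}\,k_{*}+\sum_{i\le s_1}P_iM^{1}_{z})$, is independent of the $P_i$ and equals $M^{1}_{l_y}$. That is what makes $l_y$ canonical; ``the kernel singles out a direction in $T_{\Pn,x}$'' does not supply it.

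Likewise, equivalence of the pointed divisors does not follow from a heuristic about $\mathrm{PGL}_2$. The paper's argument is:
\begin{itemize}
\item Take $T'=k_{*}^{-1}(M^{d}_{l_y})$, which has codimension $d-s_0$ in $T_{\mcY/\Pn}|_{y}$.
\item Prove the resulting distribution is integrable and lies in $\Ker\, g_{*}$ for $g:\mcY^{\circ}\to\mbL$.
\item Deduce $\dim\mathrm{Im}\,\rho\le d-n-s_0+1\le 2$ for the restriction map $\rho$ on the irreducible fibre $\mcY^{\circ}_{(l_0,z_0)}$.
\item Since this image is stable under the two-dimensional group $\Aut(l_0,z_0)$, the general pointed divisors lie in a single orbit.
\end{itemize}
Without $s_0$ neither the canonical line nor this dimension bound can even be formulated. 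So in the range $\frac{3n+3}{2}<d<2n+2$ your argument cannot produce either alternative.
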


\section{Proof of Proposition \ref{prop-half}} \label{section-proofhalf}
We divide the proof of Proposition \ref{prop-half} into several steps.

\subsection{Terminology}\label{subsection-term}

In the discussion, we use the following terminology.
Suppose we have an exact sequence 
\begin{equation}\label{eq-exseq}
0\to \mcT'\xrightarrow{\delta} \mcT \to \mcT'' \to 0
\end{equation}
 and a surjective morphism 
\begin{equation}\label{eq-surj}
\epsilon :\mcT\twoheadrightarrow \mcN
\end{equation}
of sheaves on a scheme.
If we put $\mcN'':=\Coker(\epsilon\circ\delta)$,
then we obtain an induced surjective morphism $\epsilon'':\mcT''\to \mcN''$.
If we put $\mcN':=\Ker(\mcN\to \mcN'')$, then we obtain an induced surjective morphism
$\epsilon':\mcT'\to \mcN'$.
The sheaves $\mcN'$ and $\mcN''$, together with the surjective morphisms $\epsilon'$ and $\epsilon''$,
are called \textit{induced quotients} of $\mcT'$ and $\mcT''$ respectively
obtained from (\ref{eq-exseq}) and (\ref{eq-surj}),
and the exact sequence 
\[
0\to \mcN'\to \mcN\to \mcN''\to 0
\]
is called \textit{an induced exact sequence}.

\subsection{Preparation}\label{subsection-prepa}
In this subsection, we study some tangent sheaves of varieties appearing in the diagram (\ref{eq-setting}).

Recall that $\Sigma=\mbP_{*}(S^{d}(V) \otimes W)$.
Projectivizing the trivial vector bundle $S^{d}(V)\otimes W\otimes \mcO_{C}$ on $C$,
we obtain a projective bundle 
$\mbP_{*}\left(
S^{d}(V)\otimes W\otimes \mcO_{C}
\right)$
over $C$,
which is nothing but $C\times \Sigma$.
We have a surjective map
$$
S^{d}(V)\otimes W \otimes \mcO_{C}
\to S^{d}(V) \otimes \mcL
$$
of vector bundles on $C$,
which gives rise to a dominant rational map
\begin{equation}\label{eq-CSig}
C\times\Sigma
=\mbP_{*}\left(
S^{d}(V)\otimes W\otimes \mcO_{C}
\right)
\cdots\to
\mbP_{*}\left(
S^{d}(V)\otimes\mcL
\right)
\end{equation}
of projective bundles over $C$.
Using the natural isomorphism
$$
\mbP_{*}\left(
S^{d}(V)\otimes\mcL
\right)
\simeq
\mbP_{*}\left(
S^{d}(V)\otimes\mcO_{C}
\right)
=C\times\mbP_{*}\left(
S^{d}(V)
\right),
$$
the rational map (\ref{eq-CSig}) can be written as
$C\times\Sigma \cdots\to C\times
\mbP_{*}\left(
S^{d}(V)
\right)$.
\begin{notation}\label{notation-barS}
For simplicity, we put $\barS:=\mbP_{*}\left(S^{d}(V)\right)$, the parameter space of degree $d$ hypersurfaces in $\Pn(=\mbP(V))$,
and write $\mcO_{\barS}(1)$ for $\mcO_{\mbP_{*}\left(S^{d}(V)\right)}(1)$.
\end{notation}
This rational map sends a point $(c, \sigma)$ to $(c, D_{\sigma}|_{\Pn\times\{c\}})$.
We denote by 
\begin{equation}\label{eq-alpha}
\alpha:C\times\Sigma^{sm}\to C\times \barS
\end{equation}
the restriction of the rational map to the open subset $C\times\Sigma^{sm}$,
which is a smooth morphism.
By construction, we have an isomorphism of sheaves on $C\times \Sigma^{sm}$
\begin{equation}\label{eq-alOS}
\alpha^{*}\left(\mcO_{C}\boxtimes\mcO_{\barS}(1) \right)
\simeq \mcL\boxtimes \mcO_{\Sigma^{sm}}(1),
\end{equation}
where $\mcO_{\Sigma^{sm}}(1)$ is the restriction to the open subset $\Sigma^{sm}\subset\Sigma$
of the line bundle $\mcO_{\Sigma}(1)$ on $\Sigma=\mbP_{*}(S^{d}(V) \otimes W)$.

Recall from \S \ref{subsect-posi} that the vector bundle $\mcM_{C}^{(\mcL;W)}$ is defined to be the kernel of the surjective map
$W\otimes\mcO_{C}\to \mcL$.
Then 
we have an isomorphism
\begin{equation}\label{eq-TCSigsm}
T_{C\times\Sigma^{sm}/C\times\barS}
\simeq \left(
\mcM_{C}^{(\mcL;W)}\otimes S^{d}(V)
\right)
\boxtimes\mcO_{\Sigma^{sm}}(1).
\end{equation}

The morphism $pr_{\Pn}\circ h:X\times\Sigma^{sm}\to\Pn$ in the diagram (\ref{eq-setting})
is a composition
\begin{equation}\label{eq-SXPCS}
\xymatrix@R=5pt{
X\times\Sigma^{sm} \ar@{=}[d] & X\times \barS \ar@{=}[d] &  &  \\
\Pn\times C\times \Sigma^{sm} \ar[r]^-{\mathrm{id}_{\Pn}\times \alpha}
&
\Pn\times C \times \barS \ar[r] &
\Pn
}
\end{equation}
where the second morphism is a projection to the first component.

On $\Pn \times\barS$,
we have a divisor $\mathbf{D}$ defined by
\begin{equation}\label{eq-mbfD}
\Pn \times \barS
\supset
\mathbf{D}:=\left\{
(z,D) \mid z\in D
\right\}.
\end{equation}
Let
$q_{12}:\Pn\times C\times \barS
\to \Pn\times C$
and
$q_{13}:\Pn\times C\times \barS
\to \Pn\times \barS$
be the projection to the $(1,2)-$ and $(1,3)$-component respectively.
The divisor $(q_{13}\circ (\mathrm{id}_{\Pn}\times \alpha))^{*}\mathbf{D}$ is equal to the divisor $\mcD$
in the diagram (\ref{eq-setting}).
By Proposition \ref{prop-TPU},
we have an isomorphism
\begin{equation}\label{eq-PPSd}
T_{\Pn\times \barS/\Pn}(-\log \mathbf{D})
\simeq
\mcM^{d}_{\Pn}\boxtimes\mcO_{\barS}(1),
\end{equation}
where see Notation \ref{notation-M} for $\mcM^{d}_{\Pn}$.

The divisor $q_{12}^{*}\mcS+q_{13}^{*}\mathbf{D}$ on
$\Pn\times C\times \barS$ is SNC and relatively SNC over $\Pn$.
From (\ref{eq-SXPCS}), we obtain  an exact sequence
\begin{equation}\label{eq-TTTSD}
\begin{split}
0\to
T_{\Pn\times C \times \Sigma^{sm}/ \Pn\times C\times \barS}
\to
T_{\Pn\times C \times \Sigma^{sm}/\Pn}\left(
-\log(\mcD+\mcS_{\Sigma^{sm}})
\right) \\
\to
(\mathrm{id}_{\Pn}\times \alpha)^{*}
T_{\Pn\times C\times \barS/\Pn}
\left(
-\log(
q_{12}^{*}\mcS+q_{13}^{*}\mathbf{D}
)
\right)
\to 0.
\end{split}
\end{equation}
By (\ref{eq-TCSigsm}),
we have
\begin{equation}\label{eq-Tcps}
T_{\Pn\times C \times \Sigma^{sm}/ \Pn\times C\times \barS}
\simeq 
\mcO_{\Pn}\boxtimes \left( \mcM_{C}^{(\mcL;W)}\otimes S^{d}(V) \right)
\boxtimes\mcO_{\Sigma^{sm}}(1).
\end{equation}
By (\ref{eq-PPSd}),
we have
\begin{equation}\label{eq-Tqq}
T_{\Pn\times C\times \barS/\Pn}
\left(
-\log(
q_{12}^{*}\mcS+q_{13}^{*}\mathbf{D}
)
\right)
\simeq q_{2}^{*}T_{C}(-S)
\oplus
q_{13}^{*}
\left(
\mcM_{\Pn}^{d}\boxtimes \mcO_{\barS}(1)
\right),
\end{equation}
where $q_{2}:\Pn\times C\times \barS \to C$ is the projection to the second factor.

\subsection{Proof of  the case (i) in Proposition \ref{prop-half}}
In this subsection, we will define a non-negative integer $s_{0}$ and show that if the inequality
$d-n-s_{0}-1 \geq 1$ holds, then (i) of Proposition \ref{prop-half} holds.
In the next subsection, we will show that (ii) of Proposition \ref{prop-half} holds if $d-n-s_{0}-1 \leq 0.$

Recall that we are in the situation described in Section \ref{section-setting}.
Recall also from \S \ref{subsect-logtan} that the log normal bundle $\mcN_{f}^{\log}$ of the morphism $f$ in the diagram (\ref{eq-setting})
is defined  by the exact sequence
\[
0\to 
T_{\mcY}\left(
-\log(\mcD_{\mcY}\cup \mcS_{\mcY})
\right)
\to
f^{*}T_{X\times\Sigma'}\left(-\log(
\mcD'\cup\mcS_{\Sigma'})
\right)
\to\mcN_{f}^{\log}
\to 0.
\]
For simplicity of notation, we write $\mcN_{f}$ for  $\mcN_{f}^{\log}$.
Since we are assuming that the divisor $\mcD_{\mcY}\cup \mcS_{\mcY}$ on $\mcY$ and
$\mcD'\cup \mcS_{\Sigma'}$ on $X\times \Sigma'$ are relatively SNC over $\Sigma'$,
by Proposition \ref{prop-reltan}, 
the log normal bundle $\mcN_{f}$ also fits in an exact sequence
 \[
0\to 
T_{\mcY/\Sigma'}\left(
-\log(\mcD_{\mcY}\cup \mcS_{\mcY})
\right)
\to
f^{*}T_{X\times\Sigma'/\Sigma'}\left(-\log(
\mcD'\cup\mcS_{\Sigma'})
\right)
\to\mcN_{f}
\to 0.
\]
By restricting this exact sequence to the fiber $Y_{\sigma'}$, $\sigma'\in \Sigma'$,
we obtain an exact sequence
\begin{equation}\label{eq-NfY}
0\to 
T_{Y_{\sigma'}}\left(
-\log(D_{Y_{\sigma'}}\cup \mcS_{Y_{\sigma'}})
\right)
\to
f_{\sigma'}^{*}T_{X}\left(
-\log(
D'_{\sigma'}\cup\mcS)
\right)
\to\left.\mcN_{f}\right|_{Y_{\sigma'}}
\to 0.
\end{equation}
We have
\begin{equation}\label{eq-TY>}
\begin{split}
\deg
T_{Y_{\sigma'}}\left(
-\log(D_{Y_{\sigma'}}\cup \mcS_{Y_{\sigma'}})
\right)
& = 2-2g(Y_{\sigma'})-\left|
D_{Y_{\sigma'}}\setminus \mcS_{Y_{\sigma'}}
\right|
-\left|
\mcS_{Y_{\sigma'}}
\right| \\
&
\geq 
2-2g(Y_{\sigma'})-\left|
D_{Y_{\sigma'}}\setminus \mcS_{Y_{\sigma'}}
\right|
-e|S|,
\end{split}
\end{equation}
where $e=\deg(Y_{\sigma'}/C)$.
Since
\[
\det
T_{X}\left(
-\log(
D'_{\sigma'}\cup\mcS)
\right)\simeq
\mcO_{\Pn}(n+1-d)\boxtimes
(K_{C}^{-1}\otimes\mcL^{-1}(-S)),
\]
we have
\begin{equation}\label{eq-fDS}
\deg
f_{\sigma'}^{*}T_{X}\left(
-\log(
D'_{\sigma'}\cup\mcS)
\right)
=(n+1-d)\deg\left.\mcO_{\Pn}(1)\right|_{Y_{\sigma'}}+e(2-2g(C)-l-|S|).
\end{equation}
By (\ref{eq-NfY}),
(\ref{eq-TY>}) and (\ref{eq-fDS}),
we have
\begin{equation}\label{eq-Nupper}
\begin{split}
\deg
\mcN_{f}|_{Y_{\sigma'}} & = \deg
f_{\sigma'}^{*}T_{X}\left(
-\log(
D'_{\sigma'}\cup\mcS)
\right)
-
\deg
T_{Y_{\sigma'}}\left(
-\log(D_{Y_{\sigma'}}\cup \mcS_{Y_{\sigma'}})
\right) \\
& \leq
(n+1-d)\deg\left.\mcO_{\Pn}(1)\right|_{Y_{\sigma'}}+e(2-2g(C)-l-|S|) \\
& \quad +
2g(Y_{\sigma'})-2+\left|
D_{Y_{\sigma'}}\setminus \mcS_{Y_{\sigma'}}
\right|
+e|S| \\
&
=2g(Y_{\sigma'})-2+\left|
D_{Y_{\sigma'}}\setminus \mcS_{Y_{\sigma'}}
\right|
+(n+1-d)\deg\left.\mcO_{\Pn}(1)\right|_{Y_{\sigma'}}+e(2-2g(C)-l)
\end{split}
\end{equation}

Next we will obtain a lower bound of $\deg
\mcN_{f}|_{Y_{\sigma'}}$.

Since $GL(V)$ acts on $\Pn$ transitively, 
the $GL(V)$-equivariant morphism 
$pr_{\Pn}\circ h \circ \beta' \circ f : \mcY \to \Pn$
is smooth, and the divisors
$\mcD_{\mcY}\cup \mcS_{\mcY}$ on $\mcY$ and
$\mcD'\cup \mcS_{\Sigma'}$ on $X\times \Sigma'$ are relatively SNC over $\Pn$.
Therefore the normal bundle $\mcN_{f}$ also fits in an exact sequence
\begin{equation}\label{eq-toNf}
0 \to
T_{\mcY/\Pn}\left(
-\log (\mcD_{\mcY}\cup \mcS_{\mcY})
\right)
\to
f^{*} T_{X\times \Sigma'/\Pn}\left(
-\log(\mcD'\cup\mcS_{\Sigma'})
\right)
\to
\mcN_{f}
\to 0.
\end{equation}
Pulling back the exact sequence of sheaves on $X\times \Sigma'$
\begin{equation*}
0\to
p'^{*}T_{\Sigma'/\Sigma^{sm}}
\to
T_{X\times \Sigma'/\Pn}\left(
-\log(\mcD'\cup\mcS_{\Sigma'})
\right)
\to
\beta'^{*}T_{X\times \Sigma^{sm}/\Pn}\left(
-\log(\mcD\cup \mcS_{\Sigma^{sm}})
\right)
\to 0
\end{equation*}
by the morphism $f$, 
we obtain an exact sequence of sheaves on $\mcY$
\begin{equation}
0\to
p_{1}^{*}T_{\Sigma'/\Sigma^{sm}}
\to
f^{*}T_{X\times \Sigma'/\Pn}\left(
-\log(\mcD'\cup\mcS_{\Sigma'})
\right)
\to
(\beta'\circ f)^{*}T_{X\times \Sigma^{sm}/\Pn}\left(
-\log(\mcD\cup \mcS_{\Sigma^{sm}})
\right)
\to 0.
\end{equation}
From this exact sequence and the surjective map 
$f^{*} T_{X\times \Sigma'/\Pn}\left(
-\log(\mcD'\cup\mcS_{\Sigma'})
\right)
\to
\mcN_{f}$ in (\ref{eq-toNf}),
we obtain the induced quotients
\begin{equation}\label{eq-N1}
(\beta'\circ f)^{*}T_{X\times \Sigma^{sm}/\Pn}\left(
-\log(\mcD\cup \mcS_{\Sigma^{sm}})
\right)\twoheadrightarrow \mcN_{1}
\end{equation}
$p_{1}^{*}T_{\Sigma'/\Sigma^{sm}} \twoheadrightarrow\mcN_{2}$,
and the induced exact sequence
\[
0\to \mcN_{2} \to \mcN_{f} \to \mcN_{1} \to 0
\]
(see \S \ref{subsection-term} for induced quotients and induced exact sequences).
Since the restriction of $p_{1}^{*}T_{\Sigma'/\Sigma^{sm}}$ to the fiber $Y_{\sigma'}$ is a trivial bundle,
we have
\begin{equation}\label{eq-degN2}
\deg\mcN_{2}|_{Y_{\sigma'}} \geq 0.
\end{equation}

In order to obtain a lower bound of $\deg \mcN_{1}|_{Y_{\sigma'}}$,
we need to show some positivity of $(\beta'\circ f)^{*}T_{X\times \Sigma^{sm}/\Pn}\left(
-\log(\mcD\cup \mcS_{\Sigma^{sm}})
\right)$.
From (the pull-back by $\beta'\circ f$ of) the exact sequence (\ref{eq-TTTSD}) and the surjective map (\ref{eq-N1}),
we obtain the induced quotients
\begin{align}
(\beta'\circ f)^{*} (\mathrm{id}_{\Pn}\times \alpha)^{*}
T_{\Pn\times C\times \barS/\Pn}
\left(
-\log(
q_{12}^{*}\mcS+q_{23}^{*}\mbfD
)
\right) \twoheadrightarrow \mcN_{3}, \label{eq-N3} \\
(\beta'\circ f)^{*}
T_{\Pn\times C \times \Sigma^{sm}/ \Pn\times C\times \barS }
\twoheadrightarrow \mcN_{4}, \notag
\end{align}
and the induced exact sequence
\[
0\to \mcN_{4}\to \mcN_{1} \to \mcN_{3} \to 0.
\]
By (\ref{eq-Tcps}), 
we see that there is a surjective map
\[
(pr_{C}\circ f_{\sigma'})^{*}\left(\mcM_{C}^{(\mcL;W)}\right)^{\oplus \dim S^{d}(V)}\twoheadrightarrow \mcN_{4}|_{Y_{\sigma'}}.
\]
Thus the sheaf $\left(\mcN_{4}|_{Y_{\sigma'}}\right)\otimes(pr_{C}\circ f_{\sigma'})^{*}\mcL$ is globally generated
by Proposition \ref{prop-posi},
and we have
\begin{equation}\label{eq-degN4}
\deg\mcN_{4}|_{Y_{\sigma'}} \geq -e (\rank\, \mcN_{4})\cdot \deg\mcL.
\end{equation}

By (\ref{eq-Tqq}),
there is a short exact sequence
\begin{align*}
0\to  (\beta'\circ f)^{*} & (\mathrm{id}_{\Pn}\times \alpha)^{*}
q_{2}^{*}
T_{C}(-S) \\
 \to
(\beta '\circ& f)^{*}   (\mathrm{id}_{\Pn}\times \alpha)^{*}
T_{\Pn\times C\times \barS /\Pn}
\left(
-\log(
q_{12}^{*}\mcS+q_{23}^{*}\mbfD
)
\right) \\
&  \to 
(\beta'\circ f)^{*} (\mathrm{id}_{\Pn}\times \alpha)^{*}
q_{13}^{*}
\left(
\mcM_{\Pn}^{d}\boxtimes \mcO_{\barS}(1)
\right)
\to 0.
\end{align*}
From this and the surjective map (\ref{eq-N3}),
we obtain the induced quotients
\begin{align}
(\beta'\circ f)^{*}\circ (\mathrm{id}_{\Pn}\times \alpha)^{*}
q_{13}^{*}
\left(
\mcM_{\Pn}^{d}\boxtimes \mcO_{\barS}(1)
\right)
\twoheadrightarrow \mcN_{5}, \label{align-N5} \\
 (\beta'\circ f)^{*}\circ (\mathrm{id}_{\Pn}\times \alpha)^{*}
q_{2}^{*}
T_{C}(-S) \twoheadrightarrow \mcN_{6},\label{align-N6}
\end{align}
and the induced exact sequence
\[
0\to \mcN_{6} \to \mcN_{3} \to \mcN_{5} \to 0.
\]
By (\ref{align-N6}), we have
\begin{equation}\label{eq-degN6}
\deg\mcN_{6}|_{Y_{\sigma'}} \geq
e(\rank \mcN_{6})(2-2g(C) -|S|).
\end{equation}
Finally we will study a lower bound of $\deg\mcN_{5}|_{Y_{\sigma'}}$.
Note that by definition of $\mcN_{5}$, there is an exact sequence
\begin{equation}\label{eq-exN5}
T_{\mcY/\Pn}\left(
-\log(\mcD_{\mcY}\cup \mcS_{\mcY})
\right)
\to
(\beta'\circ f)^{*} (\mathrm{id}_{\Pn}\times \alpha)^{*}
q_{13}^{*}
\left(
\mcM_{\Pn}^{d}\boxtimes \mcO_{\barS}(1)
\right)
\xrightarrow{\eta} \mcN_{5} \to 0,
\end{equation}
where we denoted by $\eta$ the surjective map in (\ref{align-N5}).
Take general elements $P_{1}, P_{2}, \dots$ of $S^{d-1(}V)$.
For a point $z\in\Pn$,
we can consider a multiplication map
\[
M_{z}^{1} \xrightarrow{P_{i}\cdot} M^{d}_{z},
\]
where see Notation and Convention for the notation $M_{z}^{d}$.
Since the fiber of $\mcM_{\Pn}^{d}$ at a point $z\in \Pn$ is $M_{z}^{d}$,
we can also consider a multiplication map
\[
\mcM_{\Pn}^{1} \xrightarrow{P_{i}\cdot} \mcM^{d}_{\Pn}.
\]
Define
\[
\gamma(i)  := \rank\, \eta\left(
\sum_{j=1}^{i}P_{j}\cdot (\beta'\circ f)^{*} (\mathrm{id}_{\Pn}\times \alpha)^{*}q_{13}^{*}\left(
\mcM_{\Pn}^{1}\boxtimes \mcO_{\mathbf{P}_{*}\left(
S^{d}(V)
\right)}(1)
\right)
\right).
\]
Then we have $\gamma(1)\leq \gamma(2)\leq \cdots$ and the sequence $\left( \gamma(i+1)-\gamma(i)\right)_{i\geq 1}$ is 
non-increasing.
Put
\[
s_{j} :=\min\{
i \mid \gamma(i+1)-\gamma(i) \leq j
\}.
\]
Then we have a generically surjective map
\[
(\beta'\circ f)^{*} (\mathrm{id}_{\Pn}\times \alpha)^{*}
q_{13}^{*}
\left(
\mcM_{\Pn}^{1}\boxtimes \mcO_{\barS}(1)
\right)^{\oplus s_{0}}
\to\mcN_{5},
\]
thus there is a generically surjective map to $\wedge^{n_{5}}\mcN_{5}\otimes(\beta'\circ f)^{*} (\mathrm{id}_{\Pn}\times \alpha)^{*}
q_{3}^{*}\mcO_{\barS}(-n_{5})$
from a direct sum of sheaves of the form
\begin{equation}\label{eq-wedge}
\bigotimes_{k=1}^{s_{0}}
\bigwedge^{a_{k}}
(pr_{\Pn}\circ h \circ \beta' \circ f)^{*}
\mcM_{\Pn}^{1},
\end{equation}
where $n_{i}:=\rank\,\mcN_{i}$, $\sum_{k=1}^{s_{0}}a_{k}=n_{5}$ 
and $q_{3}:\Pn\times C\times \barS \to \barS$
is the  projection to the third factor.
By Proposition \ref{prop-posi},
the sheaf $\left( \bigotimes_{k=1}^{s_{0}}
\bigwedge^{a_{k}}
\mcM_{\Pn}^{1}\right)
\otimes \mcO_{\Pn}(s_{0})$
is generated by global sections.

So we have
\begin{align*}
\left.
\deg\left(
\wedge^{n_{5}}\mcN_{5}
\right)
\otimes
(\beta'\circ f)^{*} (\mathrm{id}_{\Pn}\times \alpha)^{*}
q_{3}^{*}\mcO_{\barS}(-n_{5}) 
\otimes
(pr_{\Pn}\circ h \circ \beta' \circ f)^{*}\mcO_{\Pn}(s_{0})
\right|_{Y_{\sigma'}} \geq 0.
\end{align*}
Hence we obtain
\begin{equation}\label{eq-degN5}
\deg\mcN_{5}|_{Y_{\sigma'}} \geq
n_{5}\cdot e\cdot l -s_{0} \deg\mcO_{\Pn}(1)|_{Y_{\sigma'}} .
\end{equation}
By (\ref{eq-degN2}), (\ref{eq-degN4}), (\ref{eq-degN6}) and (\ref{eq-degN5}),
we have
\begin{equation}
\deg\mcN_{f}|_{Y_{\sigma'}}
\geq el(n_{5}-n_{4})+en_{6}(2-2g(C)-|S|)-s_{0} \deg\mcO_{\Pn}(1)|_{Y_{\sigma'}} .
\end{equation}
Note that
\[
n=\rank\,\mcN_{f}=n_{2}+n_{4}+n_{5}+n_{6} \quad
\text{and}
\quad n_{6}\leq 1.
\]
So we have 
\begin{equation}\label{eq-Nlower}
\deg\mcN_{f}|_{Y_{\sigma'}}
\geq 
-el(n-1)-e\cdot\max\{
l,2g(C)-2+|S|
\}
-s_{0} \deg\mcO_{\Pn}(1)|_{Y_{\sigma'}} .
\end{equation}
Combining (\ref{eq-Nupper}) and (\ref{eq-Nlower}), 
we obtain
\begin{align*}
2g(Y_{\sigma'})-2+\left|
D_{Y_{\sigma'}}\setminus \mcS_{Y_{\sigma'}}
\right|
+(n+1-d)\deg\left.\mcO_{\Pn}(1)\right|_{Y_{\sigma'}}+e(2-2g(C)-l) \\
\geq
-el(n-1)-e\cdot\max\{
l,2g(C)-2+|S|
\}
-s_{0} \deg\mcO_{\Pn}(1)|_{Y_{\sigma'}} .
\end{align*}
Hence we have
\begin{align*}
(d-n-1-s_{0})\deg\left.\mcO_{\Pn}(1)\right|_{Y_{\sigma'}}
&\leq  2g(Y_{\sigma'})-2+ \left|
D_{Y_{\sigma'}}\setminus \mcS_{Y_{\sigma'}}
\right| \\
& +
e\left(
2-2g(C)-l+l(n-1)+ \max\{l, 2g(C)-2+|S| \}
\right).
\end{align*}
If $d-n-1-s_{0} \geq 1$, then
\begin{align*}
\frac{1}{e}\deg\left.\mcO_{\Pn}(1)\right|_{Y_{\sigma'}}
\leq & \frac{1}{e} \left\{ 2g(Y_{\sigma'})-2+ \left|
D_{Y_{\sigma'}}\setminus \mcS_{Y_{\sigma'}}
\right| 
\right\}
\\
& +
2-2g(C)+l(n-1)+\max\{0, 2g(C)-2+|S| -l\}.
\end{align*}

\subsection{Proof of  the case (ii) in Proposition \ref{prop-half}}
In this subsection, continuing the arguments in the previous subsection,
we show that (ii) of Proposition \ref{prop-half} holds if $d-n-s_{0}-1 \leq 0.$
In the rest of this section, we assume $d-n-s_{0}-1 \leq 0.$

Denote by $k$ the composite of morphisms
\begin{equation*}
\mcY\xrightarrow{f}
X\times\Sigma'\xrightarrow{\beta}
X\times\Sigma^{sm}=\Pn\times C\times\Sigma^{sm}
\xrightarrow{\mathrm{id}_{\Pn}\times \alpha}
\Pn\times C\times\barS
\xrightarrow{q_{13}}\Pn\times\barS,
\end{equation*}
where recall that $\barS=\mbP_{*}(S^{d}V)$ and the morphism $\alpha$ is defined in (\ref{eq-alpha}).
Let $y\in \mcY$ be a general point.
This in particular implies that $y$ does not lie in the divisor $\mcD_{\mcY}\cup \mcS_{\mcY}$,
and that the point $k(y)$ does not line in the divisor $\mathbf{D}$ defined in (\ref{eq-mbfD}).
Let $f(y)\in X\times\Sigma'=\Pn\times C \times \Sigma'$ be expressed as 
$(z,c,\sigma')$ with $z\in\Pn$, $c\in C$ and $\sigma'\in \Sigma'$.
Put $T:=\left.\mcT_{\mcY/\Pn}(-\log(\mcD_{\mcY}\cup \mcS_{\mcY}))\right|_{y}
=\left.\mcT_{\mcY/\Pn}\right|_{y}.$
Restricting the map
\[
T_{\mcY/\Pn}\to k^{*}T_{\Pn\times\barS/\Pn}
\]
of tangent bundles induced by the morphism $k$ to the fiber over $y$,
we obtain a linear map
\begin{equation}
k_{*}:
T\to \left.T_{\Pn\times\barS/\Pn}\right|_{k(y)}
=\frac{S^{d}V}{\left.\mbC\cdot D'_{\sigma'}\right|_{\Pn\times\{c\}}}
\otimes \left(
\left.\mbC\cdot D'_{\sigma'}\right|_{\Pn\times\{c\}}
\right)^{*},
\end{equation}
where, by abuse of notation,
we denoted by $\left.\mbC\cdot D'_{\sigma'}\right|_{\Pn\times\{c\}}$
the $1$-dimensional subspace of $S^{d}V$
spanned by an element defining the divisor $\left.D'_{\sigma'}\right|_{\Pn\times\{c\}}$.
Since $z\notin \left.D'_{\sigma'}\right|_{\Pn\times\{c\}}$,
we have an isomorphism
\[
M^{d}_{z}\xrightarrow{\sim}
\frac{S^{d}V}{\left.\mbC\cdot D'_{\sigma'}\right|_{\Pn\times\{c\}}},
\]
and we identify the two vector spaces via this isomorphism (see Notation and convention for the notation $M_{z}^{d}$).
Then the map $k_{*}$ is equal to the restriction of the left arrow in (\ref{eq-exN5})
to the fiber over $y\in \mcY$.
\begin{claim}\label{claim-ss2}
We have $s_{0}-s_{1}\geq 2$.
\end{claim}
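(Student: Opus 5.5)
The plan is to argue purely numerically from the definition of the $s_{j}$, the standing assumption $d-n-s_{0}-1\leq 0$, and the hypothesis $d>\frac{3n+3}{2}$. First, the assumption gives $s_{0}\geq d-n-1>\frac{3n+3}{2}-n-1=\frac{n+1}{2}$. So it suffices to show that $s_{0}-s_{1}\leq 1$ forces $s_{0}\leq\frac{n+1}{2}$.

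Put $\gamma(0):=0$ and $\delta_{i}:=\gamma(i+1)-\gamma(i)$ for $i\geq 0$. The paper records that $(\delta_{i})_{i\geq 1}$ is non-increasing. I would first extend this to include $\delta_{0}=\gamma(1)$, i.e. show $\gamma(1)\geq\gamma(2)-\gamma(1)$. This follows by the same mechanism as the stated monotonicity. For general $P_{1},P_{2},\dots$, the rank added by $P_{i+1}\cdot\mcM^{1}_{\Pn}$ to the image under $\eta$ is at most the rank added by $P_{i}\cdot\mcM^{1}_{\Pn}$. The reason is that the $P_{j}$ are general, hence interchangeable, and adding a further subspace can only enlarge the intersection with the image of the next copy (a submodularity of ranks). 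With this, the non-increasing property applies to the whole sequence $\delta_{0}\geq\delta_{1}\geq\cdots$.

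Next, by the definition of $s_{0}$ and $s_{1}$, we have $\delta_{i}\geq 2$ for $0\leq i<s_{1}$, $\delta_{i}=1$ for $s_{1}\leq i<s_{0}$, and $\delta_{s_{0}}=0$. Hence all later increments vanish, and $\gamma(s_{0})$ is the full rank $n_{5}$ of the image. Summing the increments gives
\[
n\geq n_{5}=\gamma(s_{0})=\sum_{i=0}^{s_{0}-1}\delta_{i}\geq 2s_{1}+(s_{0}-s_{1})=s_{0}+s_{1},
\]
where $n_{5}\leq\rank\,\mcN_{f}=n$ comes from the rank decomposition $n=n_{2}+n_{4}+n_{5}+n_{6}$ established above. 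If $s_{0}-s_{1}\leq 1$, this yields $2s_{0}-1\leq n$, i.e. $s_{0}\leq\frac{n+1}{2}$. That contradicts $s_{0}>\frac{n+1}{2}$, so $s_{0}-s_{1}\geq 2$.

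The main obstacle is the justification that $\gamma(1)\geq\gamma(2)-\gamma(1)$, i.e. that the monotonicity of increments starts at index $0$. If the submodularity argument does not go through cleanly, I would fall back on the weaker fact $\gamma(1)\geq 1$ (which holds if $s_{0}\geq 1$). I would then recheck the count: the bound becomes $n\geq 1+2(s_{1}-1)+(s_{0}-s_{1})$, i.e. $s_{0}\leq\frac{n+2}{2}$ when $s_{0}-s_{1}\leq 1$. Since $d$ is an integer, $d>\frac{3n+3}{2}$ gives $s_{0}\geq d-n-1\geq\frac{n+2}{2}$, which is only borderline, so the fallback likely needs supplementing. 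I would therefore try to show directly that $\gamma(1)\geq 2$, using that $\mcM^{1}_{\Pn}$ has rank $n\geq 2$ and that multiplication by a general $P_{1}$ is injective on fibres.
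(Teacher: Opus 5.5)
Your argument is correct and is the paper's own: assuming $s_{0}-s_{1}\leq 1$, the chain $n\geq n_{5}\geq s_{0}+s_{1}\geq 2s_{0}-1$ together with $s_{0}\geq d-n-1$ contradicts $d>\frac{3n+3}{2}$. Your increment count also justifies $n_{5}\geq s_{0}+s_{1}$, which the paper leaves implicit. The extended monotonicity $\delta_{0}\geq\delta_{1}$ that you flag as the main obstacle is not needed: with $\gamma(0)=0$ and the minima defining $s_{j}$ taken over $i\geq 0$, minimality alone gives $\delta_{i}\geq 2$ for $i<s_{1}$ and $\delta_{i}\geq 1$ for $i<s_{0}$, and $\gamma(s_{0})\leq n_{5}$ is automatic, so your fallback discussion is moot.
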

\proof[Proof of Claim \ref{claim-ss2}]
Suppose that $s_{0}-s_{1} \leq 1$.
Then we have
\[
n\geq n_{5}\geq s_{0}+s_{1} \geq 2s_{0}-1.
\]
So we have 
\[
\frac{n+1}{2} \geq s_{0}\geq d-n-1.
\]
Then we obtain that $\frac{3n+3}{2}\geq d$,
which contradicts our assumption that $d>\frac{3n+3}{2}$.
\endproof
Recall that $P_{1},P_{2},\dots$ are general elements of $S^{d-1}(V)$.
The subspace
\[
\Ker\left(
M^{1}_{z}
\otimes 
\left(
\left.\mbC\cdot D'_{\sigma'}\right|_{\Pn\times\{c\}}
\right)^{*}
\xrightarrow{P_{s_{1}+1}\cdot}
\frac{M^{d}_{z}
\otimes
\left(
\left.\mbC\cdot D'_{\sigma'}\right|_{\Pn\times\{c\}}
\right)^{*}}
{
(\mathrm{Im}\, k_{*})
+\sum_{i=1}^{s_{1}}
P_{i}\cdot
\left(
M^{1}_{z}\otimes 
\left(
\left.\mbC\cdot D'_{\sigma'}\right|_{\Pn\times\{c\}}
\right)^{*}
\right)
}
\right)
\]
of $M^{1}_{z}
\otimes 
\left(
\left.\mbC\cdot D'_{\sigma'}\right|_{\Pn\times\{c\}}
\right)^{*}$
is $1$-dimensional,
so it is of the form
\[
M^{1}_{l_{y}}
\otimes 
\left(
\left.\mbC\cdot D'_{\sigma'}\right|_{\Pn\times\{c\}}
\right)^{*}
\]
for a line $l_{y}\subset\Pn$ passing through $z$
(see Notation and Convention for the notation $M^{1}_{l_{y}}$).
By Claim \ref{claim-ss2},
carrying out a similar argument as in the proof of \cite[Lemma 3.2]{CR},
we see that the line $l_{y}$ does not depend on the choice
of general elements
$P_{1},P_{2},\dots \in S^{d-1}(V)$,
and 
\begin{equation}\label{eq-MImk}
M^{d}_{l_{y}}
\otimes 
\left(
\left.\mbC\cdot D'_{\sigma'}\right|_{\Pn\times\{c\}}
\right)^{*}
\subset
(\mathrm{Im}\,k_{*})
+\sum_{i=1}^{s_{1}}
P_{i}\cdot
\left(
M^{1}_{z}\otimes 
\left(
\left.\mbC\cdot D'_{\sigma'}\right|_{\Pn\times\{c\}}
\right)^{*}
\right).
\end{equation}
Define 
\[
T':=
k_{*}^{-1}
\left(
M^{d}_{l_{y}}
\otimes 
\left(
\left.\mbC\cdot D'_{\sigma'}\right|_{\Pn\times\{c\}}
\right)^{*}
\right).
\]
By (\ref{eq-MImk}),
we have $\dim T/T'=d-s_{0}$.
So we have
\[
\dim T'=\dim \mcY-n-d+s_{0}.
\]
As a general point $y$ varies in $\mcY$,
the subspaces $T'\subset T$ form a subbundle
$\mcT'\subset \mcT_{\mcY/\Pn}$ over a Zariski open subset $\mcY^{\circ}\subset\mcY$.
Recall that $\mbG$ is the Grassmannian variety parametrizing lines in $\Pn=\mbP(V)$.
Let $\mbL$ be the moduli space parametrizing pairs $(l,w)$,
where $l$ is a line in $\Pn$, and $w$ is a point of $l$.
Let $\pi_{1}:\mbL\to \mbG$,
$\pi_{2}:\mbL\to\Pn$
be morphisms defined by $(l,w)\mapsto l$ and $(l,w)\mapsto w$ respectively.
By associating $(l_{y},z)$ to a general point $y$,
we obtain a morphism $g:\mcY^{\circ}\to \mbL$
such that 
$\pi_{2}\circ g
=(pr_{\Pn}\circ h \circ \beta' \circ f)|_{\mcY^{\circ}}$.
By a similar argument as in \cite[Appendix]{A24} (see also
\cite[Lemma 3]{V98}, \cite[\S 6]{C03}, \cite[\S 3]{CR}),
we can show that 
\begin{align}
\text{
the subbundle $\mcT'\subset \mcT_{\mcY/\Pn}$ is integrable;
}\label{align-T'T} 
\\
\text{
$\mcT'$ is in the kernel of 
$g_{*}:\mcT_{\mcY^{\circ}/\Pn}
\to g^{*}\mcT_{\mbL/\Pn}$.
}\label{align-TgT}
\end{align}

Fix a point $z_{0}\in \Pn$ and a line $l_{0}\subset \Pn$ passing through $z_{0}$.
Define closed subschemes $\mcY^{\circ}_{z_{0}}$ and $\mcY^{\circ}_{(l_{0},z_{0})}$
of $\mcY^{\circ}$ by
\begin{align*}
\mcY^{\circ}_{z_{0}} &=
\left.(pr_{\Pn}\circ h\circ \beta' \circ f)\right|_{\mcY^{\circ}}^{-1}(z_{0}), \\
\mcY^{\circ}_{(l_{0},z_{0})} &=
\left\{
y\in \mcY^{\circ}
\mid g(y)=(l_{0}, z_{0})
\right\}.
\end{align*}
Since $\GLV$ acts transitively on $\mbL$, and the morphism $g$ is $\GLV$-equivariant,
these subschemes are smooth, and
\begin{align*}
\dim\mcY^{\circ}_{z_{0}}&=
\dim\mcY^{\circ}-n, \\
\dim\mcY^{\circ}_{(l_{0},z_{0})}&=
\dim\mcY^{\circ}-2n+1.
\end{align*}
Here the subscheme
$\mcY^{\circ}_{(l_{0},z_{0})}$ is irreducible.
In fact, if $Y_{1}\subset \mcY^{\circ}_{(l_{0},z_{0})}$ is a connected component, and  $\mathfrak{S}\subset \GLV$ is the stabilizer
of the point $(l_{0},z_{0})\in \mbL$, then, 
since $\mathfrak{S}$ is connected, the fibers of $\GLV\cdot Y_{1}(\subset \mcY)$ over $\mbL$ are
isomorphic to $Y_{1}$.
Since $\mcY$ is irreducible by assumption, we have $\GLV\cdot Y_{1}=\mcY$.
So we have $Y_{1}=\mcY^{\circ}_{(l_{0},z_{0})}$.

Define a morphism $\rho:\mcY^{\circ}_{(l_{0},z_{0})}\to \mbP_{*}\mrH^{0}\left(l_{0}, \mcO_{l_{0}}(d)\right)$
by $y \mapsto \left.D_{\sigma'}\right|_{l_{0}\times\{c\}}$,
where $f(y)=(z_{0}, c,\sigma')$.
For each point $y\in \mcY^{\circ}_{(l_{0},z_{0})}$, 
there is a submanifold $L\subset \mcY^{\circ}_{z_{0}}$
passing through $y$ such that $T_{L}=\left.\mcT'\right|_{L}\subset \left.\mcT_{\mcY^{\circ}/\Pn}\right|_{L}$.
By (\ref{align-TgT}), the morphism $g$ is constant on $L$, so we have $L\subset \mcY^{\circ}_{(l_{0},z_{0})}$.
Moreover, by the definition of $T'$, the morphism $\rho$ is constant on $L$.
So we have
\begin{align*}
\dim \mathrm{Im}\, \rho & \leq
\dim \mcY^{\circ}_{(l_{0},z_{0})} -\dim L \\
&=\dim \mcY-2n+1-(\dim\mcY-n-d+s_{0}) \\
& = d-n-s_{0}+1 \leq 2.
\end{align*}
We see that
if $\dim \mathrm{Im}\, \rho=1$, then for general $y\in \mcY^{\circ}$ with $f(y)=(z,c,\sigma')$,
the line $l_{y}$ intersects $D_{\sigma'}$ in one point;
and if $\dim \mathrm{Im}\, \rho=2$,
then for general points $y_{1},y_{2} \in \mcY$ with 
$f(y_{i})=(z_{i},c_{i},\sigma'_{i})$, $i=1,2$,
the divisor 
$\left.
D_{\sigma'_{1}}
\right|_{l_{y_{1}}\times\{c_{1}\}}$
on the pointed line $(l_{y_{1}},z_{1})$ is equivalent to the divisor
$\left.
D_{\sigma'_{2}}
\right|_{l_{y_{2}}\times\{c_{2}\}}$
on the pointed line $(l_{y_{2}},z_{2})$.
This shows that (ii) of Proposition \ref{prop-half} holds.
\hfill \qed

%
\section{Lines intersecting hypersurfaces in a fixed divisor}\label{sect-lines}
Fixing a degree $d$ effective divisor $E_{0}$ on a pointed line $(l_{0}, p_{0})$, 
we construct a moduli space of quadruples $(l,p,,E,B)$ where $l\subset \Pn$ is a line, $p\in l$,
$E$ is a degree $d$ effective divisor on $l$, and $B\subset \Pn$ is a degree $d$ hyperplane such that
$B|_{l}=E$ and the divisor $E$ on the pointed line $(l,p)$ and the divisor $E_{0}$ on $(l_{0},p_{0})$ are equivalent.
We also construct  a compactification of the moduli.
We carried out such construction in \cite[\S 5]{A24},
where moduli were constructed relatively over the affine space $\mrH^{0}(\Pn, \mcO(d))$.
In this paper we want the moduli space to be constructed relatively over the projective space
$\mbP_{*}\left(
\mrH^{0}(\Pn, \mcO(d))
\right)
=\barS$.
To obtain such a moduli space,
we decided to reproduce most of the argument in \cite[\S 5]{A24}
rather than constructing our needed moduli space by dividing by the $\mbC^{\times}$-action
the moduli space constructed in \cite[\S 5]{A24}.
One reason for this is that the reproduction of the argument in \cite[\S 5]{A24} makes this paper self-contained;
another reason is that we need not only the moduli space but also the description of line bundles on it,
and 
it seems easier to reproduce the argument than to look at how $\mbC^{\times}$ acts on the fibers of  line bundles appearing  in \cite[\S 5]{A24}.

\subsection{Construction}\label{subsect-const}

Fix a pointed line $(l_{0},p_{0})$ in $\Pn$ and a degree $d$ effective divisor $E_{0}$ on $l_{0}$
such that $p_{0}\notin\Supp\,E_{0}$ and 
\begin{equation}\label{eq-assumption}
\sharp\Supp\,E_{0}\geq 2.
\end{equation}

In this section we construct a smooth variety $\overline{R}$ 
proper over $\barS=\mbP_{*}\left(
\mrH^{0}(\Pn,\mcO_{\Pn}(d))
\right)
=\mbP_{*}
\left(
S^{d}V
\right)$
whose open subset parametrizes quadruples $(l,p.E,B)$,
where  $l\subset \Pn$ is a line, $p$ is a point of $l$,
$E$ is a degree $d$ effective divisor on $l$ and $B\in \overline{\mbfS}$,
a hypersurface of degree $d$,
such that the divisor $E_{0}$ on $(l_{0},p_{0})$ and
$E$ on $(l,p)$ are equivalent, and that the equality $l\cap B=E$ holds as divisors.
For this, most of this section is devoted to the construction
of a smooth compactification of a variety $\mbO$ parametrizing
triples $(l,p,E)$ which constitute the first three components of the quadruples $(l,p.E,B)$
mentioned above.
(The precise definition of $\mbO$ will be given later.)
Now we begin the construction.
Decompose the divisor $E_{0}$ 
according to the multiplicities at each point as
\[
E_{0}=m^{(1)}E_{0}^{(1)}+\dots +m^{(c)}E_{0}^{(c)}, \quad c\geq 1,
\]
where $E_{0}^{(i)}=q_{1}^{(i)}+\dots +q_{e^{(i)}}^{(i)}$
with $q_{j}^{(i)}$, $1\leq j \leq e^{(i)}$, $1\leq i \leq c$,
distinct points,
and $m^{(i)}$, $1\leq i \leq c$,  distinct positive integers.
(Here we changed the notation from \cite{A24}, where
the notation $E_{0}^{(i)}$ was used for $m^{(i)}(q_{1}^{(i)}+\dots +q_{e^{(i)}}^{(i)})$.)

Note that we have
\[
d=\sum_{i=1}^{c}m^{(i)}e^{(i)}.
\]
For $1\leq i \leq c$,
let $\mbK^{(i)}$ be the space parametrizing triples
$(l,p,E^{(i)})$,
where $l\subset \Pn$ is a line, $p$ is a point of $l$
and $E^{(i)}$ is a degree $e^{(i)}$ effective divisor on $l$.
We have a natural morphism $\mbK^{(i)}\to \mbL$ given by
$(l,p,E^{(i)})\mapsto (l,p)$.
More precisely,
the variety $\mbK^{(i)}$ is constructed as follows.
Recall the natural exact sequence (\ref{eq-univG}) on $\mbG$
\[
0\to \mcK \to V\otimes \mcO_{\mbG}\to \mcQ \to 0.
\]
Then $\mbK^{(i)}=\mbP_{*}\left(
S^{e^{(i)}}\pi_{1}^{*}\mcQ \right)$.
Here see Notation and Convention at the end of the introduction
for the use of $\mbP_{*}$,
and recall that $\pi_{1}:\mbL\to \mbG$ is the morphism given by $(l,p)\mapsto l$.
The tautological line bundle $\mcO_{
\mbP_{*}\left(
S^{e^{(i)}}\pi_{1}^{*}\mcQ \right)
}(1)$
is denoted by $\mcO_{\mbK^{(i)}}(1)$.

Put $\mbK :=\mbK^{(1)}\times_{\mbL}\times \dots \times_{\mbL}\mbK^{(c)}$.
Thus $\mbK$ parametrizes tuples
$$(l,p,E^{(i)}, 1\leq i\leq c),$$
where $(l,p,E^{(i)})\in \mbK^{(i)}$ for each $1\leq i\leq c$.
Let 
$$\tau^{(i)}_{1}:\mbK\to \mbK^{(i)}$$
be the $i$-th projection and
\[
\tau^{(i)}_{2}:\mbK^{(i)}\to \mbL \quad
\text{and}
\quad 
\tau:\mbK \to \mbL
\]
be natural morphisms to $\mbL$.
We denote by $k_{0}$ the point $(l_{0}, p_{0}, E_{0}^{(i)}, 1\leq i \leq c)$ of $\mbK$.
The group $\GLV$ naturally acts on $\mbK$, and the orbit $\GLV \cdot k_{0}$
of $k_{0}$ by this action is denoted by $\mbO$.
We can consider $\mbO$ as a parameter space parametrizing triples $(l,p,E)$,
where $l\subset \Pn$ is a line, $p$ is a point of $l$
and $E$ is a degree $d$ effective divisor on $l$
such that the divisor $E_{0}$ on $(l_{0},p_{0})$ and
$E$ on $(l,p)$ are equivalent.
The closure $\overline{\mbO}\subset \mbK$ might have singularities.
We want to construct its smooth model.

Let
\begin{equation}\label{eq-frakS}
\GLV \supset
\mathfrak{S}:=\left\{
g\in \GLV \mid
g(l_{0},p_{0})=(l_{0},p_{0})
\right\}
\end{equation}
be the stabilizer group of $(l_{0},p_{0})$,
so we have an isomorphism $\mbL\simeq\GLV/\mathfrak{S}$
because $\GLV$ acts transitively on $\mbL$.
Denote by $\overline{\mbO}_{(l_{0}, p_{0})}$ (resp.$\mbO_{(l_{0}, p_{0})}$) the fiber of the map $\tau|_{\overline{\mbO}}:\overline{\mbO}\to \mbL$ 
(resp. $\tau|_{\mbO}:\mbO\to \mbL$) 
over $(l_{0},p_{0})$.
We aim at constructing a smooth model of $\overline{\mbO}_{(l_{0}, p_{0})}$,
then the homogeneity of $\mbL$ will allow us to obtain a smooth model of $\overline{\mbO}$.
Denote by $\mathsf{p}_{0} \subset \mrH^{0}(l_{0}, \mcO_{l_{0}}(1) )$ 
the codimension $1$ subspace corresponding to the point $p_{0}$,
and fix an isomorphism
\begin{equation}\label{eq-C2}
\mrH^{0}(l_{0}, \mcO_{l_{0}}(1) ) \simeq \mbC^{2}
\end{equation}
such that the subspace $\mathsf{p}_{0}$
maps to the subspace $\langle (1,0) \rangle$.
The isomorphism (\ref{eq-C2}) determines an isomorphism
\begin{equation}\label{eq-l0P}
l_{0} \simeq \mbP^{1}
\end{equation}
such that the point $p_{0}\in l_{0}$ corresponds to $[1:0]\in \mbP^{1}$.

Define a subalgebra $H_{0}\subset \End\, \mrH^{0}(l_{0}, \mcO_{l_{0}}(1) )$ by
\[
H_{0}=\left\{
\sigma \in \End\,\mrH^{0}(l_{0}, \mcO_{l_{0}}(1) ) \mid
\sigma(\mathsf{p}_{0})\subset \mathsf{p}_{0}
\right\},
\]
and a subgroup $H_{0}^{\times} \subset\Aut\, \mrH^{0}(l_{0}, \mcO_{l_{0}}(1) )$
by
$H_{0}^{\times}:=H_{0}\cap \Aut\, \mrH^{0}(l_{0}, \mcO_{l_{0}}(1) ) $.
Under the isomorphism (\ref{eq-C2}),
we can consider $H_{0}$ as the set of $2\times 2$ matrices of the form
$\begin{pmatrix}
* & * \\
0 & * 
\end{pmatrix}$.
Put
\[
\mbH_{0}:=\mbP_{*}(H_{0})=\left.\left\{\left.
\begin{pmatrix}
a & b \\
0 & d 
\end{pmatrix}
\right|
(a,b,d)\in \mbC^{3} \setminus\{(0,0,0)\} 
\right\}\right/
\mbC^{\times}
\simeq \mbP^{2}=\left\{
[a:b:d]
\right\}
\]
and $\mbH_{0}^{\times}:=H_{0}^{\times}/\mbC^{\times}$,
which is an open subset of $\mbH_{0}$
and nothing but $\Aut\, (l_{0}, p_{0})$.
The tautological line bundle $\mcO_{\mbP_{*}(H_{0})}(1)$ is denoted by $\mcO_{\mbH_{0}}(1)$.
The complement $\mbH_{0}\setminus \mbH_{0}^{\times}$ is a union of two lines \begin{equation}\label{eq-ADelta}
A:=(a=0)\quad \text{and}\quad \Delta:=(d=0).
\end{equation}

In the following argument, it is more convenient to consider that
$\mbK^{(i)}$ parametrizes triples $(l,p,\mathsf{E})$,
where $(l,p)\in \mbL$ and $\mathsf{E}$ is a $1$-dimensional subspace of 
$\mrH^{0}\left(
l_{0},
\mcO_{l_{0}}(e^{(i)})
\right)$.
Let $\mathsf{E}_{0}^{(i)}\subset 
\mrH^{0}\left(
l_{0},
\mcO_{l_{0}}(e^{(i)})
\right)$
be the $1$-dimensional subspace corresponding to the divisor $E_{0}^{(i)}$ on $l_{0}$.
We denote by $\mbK_{(l_{0}, p_{0})}$ the fiber of the map $\tau:\mbK\to \mbL$ over $(l_{0},p_{0})$.
We have an isomorphism
\[
\mbK_{(l_{0}, p_{0})}
\simeq \mbP^{e^{(1)}} \times \dots \times\mbP^{e^{(c)}}.
\]
Define a rational map 
$
\mu_{0}:\mbH_{0}\dashedrightarrow \mbK_{(l_{0}, p_{0})}
$
by
\[
\mu_{0}([\sigma])
=
\left(
l_{0}, p_{0},
(S^{e^{(i)}}\sigma)(\mathsf{E}_{0}^{(i)}),
1\leq i \leq c
\right)
\]
for $[\sigma]\in \mbH_{0}$, where $0\neq\sigma \in H_{0}$.
Let us consider the indeterminacy locus of this rational map.

The isomorphism (\ref{eq-l0P}) and the isomorphism $\mbP^{1}\setminus \{[1:0]\} \simeq \mbC$
by $[\alpha:1]\mapsto \alpha$ gives an isomorphism
\begin{equation}\label{eq-lC}
l_{0} \setminus\{ p_{0} \}
\simeq \mbC.
\end{equation}
Identify $\mrH^{0}(l_{0},\mcO(1))$ with $\mbC^{2}$ by the isomorphism (\ref{eq-C2}),
so we have
$\mrH^{0}(l_{0},\mcO(e^{(i)})) \simeq S^{e^{(i)}}(\mbC^{2})$.
If the point $q_{j}^{(i)}$ corresponds to $a_{j}^{(i)}\in \mbC$ via the isomorphism (\ref{eq-lC}),
then $\mathsf{E}_{0}^{(i)}=
\left\langle
\prod_{j=1}^{e^{(i)}}(a_{j}^{(i)}\mathbf{e}_{0}+\mathbf{e}_{1})
\right\rangle
\subset S^{e^{(i)}}(\mbC^{2})$,
where $\mathbf{e}_{0}=(1,0)$ and $\mathbf{e}_{1}=(0,1)$ are the canonical basis of $\mbC^{2}$.
The rational map $\mu_{0}$ is undefined at a point $[\sigma]\in \mbH_{0}$,
$\sigma=\begin{pmatrix}
a & b \\
0 & d 
\end{pmatrix}$,
precisely when $\mathsf{E}_{0}^{(i)}\subset \Ker\,S^{e^{(i)}}\sigma$ for some $1\leq i\leq c$.
Therefore the indeterminacy locus of $\mu_{0}$ consists of points
$
\left[
\sigma^{(i)}_{j}
\right]$ $ (1\leq i\leq c, \; 1\leq j\leq e^{(i)}),
$
where $\sigma^{(i)}_{j}=
\begin{pmatrix}
1 & -a^{(i)}_{j} \\
0 & 0
\end{pmatrix}.$
Let $\beta:\widetilde{\mbH_{0}}\to \mbH_{0}$ be the blowing-up of $\mbH_{0}$ at all these points.
Denote by $C^{(i)}_{j}$ be the exceptional curve of $\widetilde{\mbH_{0}}$ mapping to the point $
\left[
\sigma^{(i)}_{j}
\right]$.
We claim that this blowing-up resolves the indeterminacy locus of the rational map $\mu_{0}$,
or equivalently, that there is a morphism $\widetilde{\mu_{0}}:\widetilde{\mbH_{0}}\to \mbK_{(l_{0},p_{0})}$
such that $\widetilde{\mu_{0}}=\mu_{0}\circ \beta$.
Indeed, if we choose local coordinates $(u,v)$ of $\mbH_{0}$ centered at, say, $[\sigma_{1}^{(1)}]$
as $\left[
\begin{pmatrix}
1 & u-a_{1}^{(1)} \\
0 & v
\end{pmatrix}
\right]$,
then, in a neighborhood of $[\sigma_{1}^{(1)}]$,
the rational map $\mu_{0}$ is expressed as
\begin{equation}\label{eq-local}
\begin{split}
(u,v) \mapsto
\left(
\left\langle
(u\mathbf{e}_{0}+v\mathbf{e}_{1})\prod_{j=2}^{e^{(1)}}
\left(
(a_{j}^{(1)}-a_{1}^{(1)}+u)\mathbf{e}_{0}+v\mathbf{e}_{1}\right)
\right\rangle, \right. \\
\left.
\left\langle
\prod_{j=1}^{e^{(i)}}
\left(
(a_{j}^{(i)}-a_{1}^{(1)}+u)\mathbf{e}_{0}+v\mathbf{e}_{1}
\right)
\right\rangle,
2\leq i \leq c
\right).
\end{split}
\end{equation}
The exceptional curve $C^{(1)}_{1}$ is covered by the two affine charts with coordinates
$(u, v/u)$ and $(u/v,v)$ respectively.
In the affine chart with coordinates $(u,v/u)$, we have
\begin{equation}
\begin{split}
(u\mathbf{e}_{0}+v\mathbf{e}_{1})\prod_{j=2}^{e^{(1)}}
\left(
(a_{j}^{(1)}-a_{1}^{(1)}+u)\mathbf{e}_{0}+v\mathbf{e}_{1}\right) \\
=
u\times(\mathbf{e}_{0}+\frac{v}{u}\mathbf{e}_{1})\prod_{j=2}^{e^{(1)}}
\left(
(a_{j}^{(1)}-a_{1}^{(1)}+u)\mathbf{e}_{0}+u\cdot\frac{v}{u}\mathbf{e}_{1}\right),
\end{split}
\end{equation}
and the term after ``$u\times$\rq\rq{} does not vanich on $C^{(1)}_{1}=(u=0)$.
For $i\geq 2$, the terms appearing (\ref{eq-local}) do not vanish at $[\sigma_{1}^{(1)}]$.
It follows from these that $\widetilde{\mu_{0}}$ is defined on this affine chart.
In the other affine chart with coordinates $(u/v,v)$,
we can similarly verify that $\widetilde{\mu_{0}}$ is defined.
Since we can argue similarly in neighborhoods of other $C_{j}^{(i)}$\rq{}s,
we see that $\widetilde{\mu_{0}}$ is a morphism.
Moreover, from the above computation, we see that
\begin{equation}\label{eq-muback}
\begin{split}
\widetilde{\mu_{0}}^{*}
\left(
\mcO_{\mbP^{e^{(1)}}} \boxtimes
\dots
\boxtimes \mcO_{\mbP^{e^{(i)}}}(-1)
\boxtimes\dots\boxtimes
\mcO_{\mbP^{e^{(c)}}}  
\right) \\
\simeq
\beta^{*}\mcO_{\mbH_{0}}(-e^{(i)})
\otimes
\mcO_{\widetilde{\mbH_{0}}}
\left(
\sum_{j=1}^{e^{(i)}}
C^{(i)}_{j}
\right).
\end{split}
\end{equation}
The image of $\widetilde{\mu_{0}}:\widetilde{\mbH_{0}}\to \mbK_{(l_{0},p_{0})}$ is
$\overline{\mbO}_{(l_{0},p_{0})}$,
but the morphism $\widetilde{\mu_{0}}:\widetilde{\mbH_{0}}\to \overline{\mbO}_{(l_{0},p_{0})}$
is not necessarily birational.
In order to make it birational, we need to divide $\widetilde{\mbH_{0}}$
by a finite group.
Put
\[
\mathfrak{S}_{1}:=\left\{
\sigma
\left|
\sigma E_{0}=E_{0}
\right.
\right\}
\subset \Aut (l_{0}, p_{0})=\mbH_{0}^{\times}.
\]
By the assumption $\sharp\Supp\,E_{0} \geq 2$,
the group $\mathfrak{S}_{1}$ is a finite cyclic group.
By replacing the isomorphism (\ref{eq-C2}) if necessary,
we may assume that
\[
\mathfrak{S}_{1}=
\left\langle
\left[
\begin{pmatrix}
1 & 0 \\
0 & \zeta 
\end{pmatrix}
\right]
\right\rangle,
\]
where $\zeta=\exp\left( \frac{2\pi i}{r} \right)$
and $r$ is the order of $\mathfrak{S}_{1}$.
Since $E_{0}$ is fixed by the action of $\mathfrak{S}_{1}$,
we have
\begin{equation}\label{eq-eineq}
r\leq e^{(i)}
\end{equation}
unless $E^{(i)}_{0}$ consists of a single point $[0:1]$.
It follows from this and (\ref{eq-assumption})
that
\begin{equation}\label{eq-rdineq}
r\leq \sum_{i=1}^{c}e^{(i)}\leq d.
\end{equation}

Note that the group 
$\mbH_{0}^{\times}=\Aut\,(l_{0},p_{0})$ has a natural right action on $\mbH_{0}$.
This action does not necessarily lift to $\widetilde{\mbH_{0}}$,
but the subgroup $\mathfrak{S}_{1}$ acts on  $\widetilde{\mbH_{0}}$
from the right, and the morphism $\widetilde{\mu_{0}}$ factors as
\[
\widetilde{\mbH_{0}}\to \widetilde{\mbH_{0}}/\mathfrak{S}_{1} \to\overline{\mbO}_{(l_{0},p_{0})},
\]
where the right arrow is birational.
However the variety $\widetilde{\mbH_{0}}/\mathfrak{S}_{1}$ might have quotient singularities. 
Let us construct its smooth model.
If the point $[0:1]\in l_{0}$ is in the support of the divisor $E_{0}$,
then permuting points $q^{(i)}_{j}$\rq{}s, we may assume that $q_{1}^{(1)}=[0:1]$.
(Here recall that we identify $l_{0}$ with $\mbP^{1}$ via (\ref{eq-l0P}).)
We define $\widetilde{\mbH_{0}}^{\dagger}$ to be $\widetilde{\mbH_{0}}$ if $[0:1]\in \Supp\, E_{0}$,
an the blowing-up of $\widetilde{\mbH_{0}}$ at the point $\left[
\begin{pmatrix}
1 & 0 \\
0 & 0 
\end{pmatrix}
\right]$
if $[0:1]\notin \Supp\, E_{0}$.
(Here, by abuse of notation, we are writing $\left[
\begin{pmatrix}
1 & 0 \\
0 & 0 
\end{pmatrix}
\right]$
for the pre-image of the point $\left[
\begin{pmatrix}
1 & 0 \\
0 & 0 
\end{pmatrix}
\right]\in\mbH_{0}$
by the map $\beta:\widetilde{\mbH_{0}}\to \mbH_{0}$.
Note that in the case $[0:1]\notin \Supp\, E_{0}$, the point 
$\left[
\begin{pmatrix}
1 & 0 \\
0 & 0 
\end{pmatrix}
\right]\in\mbH_{0}$ is not equal to any $[\sigma_{j}^{(i)}]$ 
($1\leq i \leq c,\, 1\leq j\leq e^{(i)}$).)
In the latter case, we denote by $C_{0}$ the exceptional curve of $\widetilde{\mbH_{0}}^{\dagger}\to
\widetilde{\mbH_{0}}$.
The strict transforms of the curves $C_{j}^{(i)}$ on $\widetilde{\mbH_{0}}$ to $\widetilde{\mbH_{0}}^{\dagger}$
are denoted by the same letters.
Denote $\widetilde{\mu_{0}}^{\dagger}$ the composed morphism
$\widetilde{\mbH_{0}}^{\dagger}
\to \widetilde{\mbH_{0}}\xrightarrow{\widetilde{\mu_{0}}} \overline{\mbO}_{(l_{0},p_{0})}. $
The action of $\mathfrak{S}_{1}$ on $\widetilde{\mbH_{0}}$ lifts to $\widetilde{\mbH_{0}}^{\dagger}$,
and we have a diagram
\[
\xymatrix@R=15pt@C=40pt{
\widetilde{\mbH_{0}}^{\dagger}  \ar[r] \ar[d] \ar@/_30pt/[dd]_-{\widetilde{\mu_{0}}^{\dagger}}
& \widetilde{\mbH_{0}}  \ar[r]^-{\beta} \ar[d] 
& \mbH_{0} \\
{ \widetilde{\mbH_{0}}^{\dagger}/\mathfrak{S}_{1}  } \ar[r] \ar[d] & {\widetilde{\mbH_{0}}/\mathfrak{S}_{1}  }  \ar[dl]& \\
\overline{\mbO}_{(l_{0},p_{0})}.  & & \\
}
\]
The variety $\widetilde{\mbH_{0}}^{\dagger}/\mathfrak{S}_{1}$ is smooth and the induced morphism 
$\widetilde{\mbH_{0}}^{\dagger}/\mathfrak{S}_{1} \to \overline{\mbO}_{(l_{0},p_{0})}$
gives an isomorphism of $\mbO_{(l_{0},p_{0})}$ with its inverse image,
so $\widetilde{\mbH_{0}}^{\dagger}/\mathfrak{S}_{1}$ is a smooth model of   $\overline{\mbO}_{(l_{0},p_{0})}$.
Finally, using the homogeneity of $\mbL$, we construct a smooth model of $\overline{\mbO}$.
The group $\mbH_{0}^{\times}=\Aut\,(l_{0},p_{0})$ acts naturally on $\mbH_{0}$
from the left.
Since the points of the form
$
\left[
\begin{pmatrix}
1 & * \\
0 & 0
\end{pmatrix}
\right]
$
of $\mbH_{0}$
are fixed points of this action, the blowing-ups $\widetilde{\mbH_{0}}$ 
and $\widetilde{\mbH_{0}}^{\dagger}$ 
have  induced left action of $\mbH_{0}^{\times}$.
Recall that we have an isomorphism
$\GLV/\mathfrak{S}\simeq \mbL$ (see (\ref{eq-frakS}) for the definition of the group $\mathfrak{S}$).
By associating to $g\in \mathfrak{S}$ the automorphism of $\mrH^{0}(l_{0},\mcO_{l_{0}}(1))$ induced from the morphism
$g:V\to V(=\mrH^{0}(\Pn,\mcO(1)))$,
we have a group homomorphism
\begin{equation}\label{eq-Sto}
\mathfrak{S}\to H_{0}^{\times}\subset \Aut\,\mrH^{0}(l_{0},\mcO_{l_{0}}(1)).
\end{equation}
Via this homomorphism (and the natural map $H_{0}^{\times}\to \mbH_{0}^{\times}$),
the group $\mathfrak{S}$ also acts on $\widetilde{\mbH_{0}}^{\dagger}$ from the left.
We define a right action of $\mathfrak{S}$ on $\GLV\times \widetilde{\mbH_{0}}^{\dagger}$ by
$(g,h)g\rq{}:=(g\cdot g\rq{},g\rq{}^{-1}\cdot h)$,
where $g\in \GLV$, $h\in \widetilde{\mbH_{0}}^{\dagger}$ and $g\rq{}\in \mathfrak{S}$.
The quotient space $\GLV\times_{\mathfrak{S}} \widetilde{\mbH_{0}}^{\dagger}$ of $\GLV\times \widetilde{\mbH_{0}}^{\dagger}$
by this action is a fiber space over $\mbL$ with fibers isomorphic to 
$\widetilde{\mbH_{0}}^{\dagger}$.
The morphism
\[
\GLV \times \widetilde{\mbH_{0}}^{\dagger}\to \overline{\mbO} \subset \mbK
\]
defined by$(g,h)\mapsto g\widetilde{\mu_{0}}^{\dagger}(h)$ factors as
\begin{equation}\label{eq-Obar}
\GLV \times \widetilde{\mbH_{0}}^{\dagger} \to \GLV \times_{\mathfrak{S}} 
\widetilde{\mbH_{0}}^{\dagger} \to \overline{\mbO}.
\end{equation}
By letting $\mathfrak{S}_{1}$ act trivially on the first factor,
we have a right $\mathfrak{S}_{1}$-action on $\GLV \times \widetilde{\mbH_{0}}^{\dagger}$.
This action induces a right action of $\mathfrak{S}_{1}$ on $\GLV \times_{\mathfrak{S}} \widetilde{\mbH_{0}}^{\dagger}$.
The right arrow in (\ref{eq-Obar}) factors as 
\begin{equation}\label{eq-Obar2}
\GLV \times_{\mathfrak{S}} \widetilde{\mbH_{0}}^{\dagger}
\to
\left.
\left(
\GLV \times_{\mathfrak{S}} \widetilde{\mbH_{0}}^{\dagger}
\right)\right/
\mathfrak{S}_{1}
\to \overline{\mbO}.
\end{equation}
The right arrow in (\ref{eq-Obar2}) is birational, and the variety $\left.\left(
\GLV \times_{\mathfrak{S}} \widetilde{\mbH_{0}}^{\dagger}
\right)\right/
\mathfrak{S}_{1}$ is smooth.

We have the following diagram :
\[
\xymatrix@R=15pt@C=40pt{
\GLV\times_{\mathfrak{S}}\widetilde{\mbH_{0}}^{\dagger}  \ar[r]^-{\eta_{0}} \ar[d]^-{\varphi}&
\GLV \times_{\mathfrak{S}}\mbH_{0} \ar@/^24pt/[lddddd]^-{\theta} \\
{(\GLV\times_{\mathfrak{S}}\widetilde{\mbH_{0}}^{\dagger})/\mathfrak{S}_{1}  } \ar[d]^-{\psi}&
\\
\overline{\mbO} \ar@{}[d]|{\bigcap} &  \\
\mbK \ar[d]^-{\tau^{(i)}_{1}} \ar@/_24pt/[dd]_-{\tau} & \\
\mbK^{(i)} \ar[d]^-{\tau^{(i)}_{2}} &  \\
\mbL \ar[r]^-{\pi_{1}} \ar[d]^-{\pi_{2}}& \mbG  \\
\mbP^{n} &  
}
\]
where $\varphi$, $\psi$, $\eta_{0}$,  $\theta$ are natural morphisms.
For simplicity of notation, put
\[
\widetilde{\mcP}^{\dagger}:=\GLV\times_{\mathfrak{S}}\widetilde{\mbH_{0}}^{\dagger}
\quad
\text{and}
\quad
\mcP:=\GLV \times_{\mathfrak{S}}\mbH_{0}.
\]
The variety $\widetilde{\mcP}^{\dagger}/\mathfrak{S}_{1}$ is smooth and proper,
and the birational morphism $\psi$ induces an isomorphism $\psi^{-1}(\mbO)\xrightarrow{\sim} \mbO$.


Now that we have constructed the smooth compactification $\widetilde{\mcP}^{\dagger}/\mathfrak{S}_{1}$
of $\mbO$, we next construct  a parameter space and its compactifications 
over $\overline{\mbfS}$
of quadruples $(l,p.E,B)$,
where $(l,p,E)\in \mbO$  and 
$B\in \overline{\mbfS}=\mbP_{*}\left(\mrH^{0}(\Pn,\mcO_{\Pn}(d)) \right)$
such that $l\cap B=E$.

Recall that $\mbK^{(i)}=\mbP_{*}\left( S^{e^{(i)}}\pi_{1}^{*}\mcQ\right)$ and 
we write $\mcO_{\mbKi}(1)$ for $\mcO_{ \mbP_{*}\left( S^{e^{(i)}}\pi_{1}^{*}\mcQ\right) }(1)$.
From the natural injective map
$\mcO_{\mbKi}(-1)\to (\pi_{1}\circ \tau_{2}^{(i)})^{*} S^{e^{(i)}}\mcQ$,
we obtain, by taking a tensor product for $1\leq i \leq c$,
an injective map
\begin{equation*}
\bigotimes_{i=1}^{c}
\tau_{1}^{(i)*}
\mcO_{\mbKi}(-m^{(i)})
\to
(\pi_{1}\circ \tau)^{*}S^{d}\mcQ.
\end{equation*}
We define a vector bundle $\mcU$ on $\mbK$ to be the kernel of the composite of morphisms
\begin{equation}\label{eq-defU}
S^{d}V\otimes\mcO_{\mbK}
\to (\pi_{1}\circ \tau)^{*}S^{d}\mcQ
\to
\frac{
 (\pi_{1}\circ \tau)^{*}S^{d}\mcQ
}
{
\bigotimes_{i=1}^{c}
\tau_{1}^{(i)*}
\mcO_{\mbKi}(-m^{(i)})
}.
\end{equation}
We have
\begin{equation}
\rank \; \mcU=\dim S^{d}V-d,
\end{equation}
and
we have an exact sequence (cf. Notation \ref{notation-M})
\begin{equation}\label{eq-MUt}
0 \to (\pi_{1}\circ \tau)^{*}\mcM^{d}_{\mbG}
\to \mcU \to
\bigotimes_{i=1}^{c}
\tau_{1}^{(i)*}
\mcO_{\mbKi}(-m^{(i)}) \to 0.
\end{equation}
Define a variety $\overline{R}_{\mbK}$ over $\mbK$ to be $\mbP_{*}(\mcU)$.
If $?=\overline{\mbO},\, \widetilde{\mcP}^{\dagger},\,
\widetilde{\mcP}^{\dagger}/\mathfrak{S}_{1}$,
then we put $\overline{R}_{?}:=(\overline{R}_{\mbK})\times_{\mbK}?$, 
the fiber product of $\overline{R}_{\mbK}$ and $?$ over $\mbK$.
But for simplicity of notation we write $\overline{R}$ for 
$\overline{R}_{ \widetilde{\mcP}^{\dagger}/\mathfrak{S}_{1} }$.
Since the vector bundle $\mcU$ is a subbundle of $S^{d}V\otimes\mcO_{\mbK}$, the variety $\overline{R}_{\mbK}$ is a subvariety
of $\mbK\times \overline{\mbfS}$.
We have the following commutative diagram:
\begin{equation}\label{eq-bigdiag}
\xymatrix@R=15pt@C=40pt{
&\overline{R}_{ \widetilde{\mcP}^{\dagger} } \ar[r]^-{\eta_{3}} \ar[d]^-{\rho} 
& \widetilde{\mcP}^{\dagger} \ar[d]^-{\varphi} \ar[r]^-{\eta_{0}}
& \mcP \ar@/^24pt/[lddddd]^-{\theta} 
 \\
&\overline{R}\ar[r]^-{\bar{\eta}_{3}} \ar[d]
& {\widetilde{\mcP}^{\dagger} /\mathfrak{S}_{1}  } \ar[d]^-{\psi}
&
 \\
&\overline{R}_{ \overline{\mbO} }  \ar[r] \ar@{}[d]|{\bigcap}  
& \overline{\mbO} \ar@{}[d]|{\bigcap} & \\
&\overline{R}_{\mbK}  \ar[dd]^{\eta}    \ar[r]^-{\lambda}                  
& \mbK \ar[d]_-{\tau^{(i)}_{1}} \ar@/^15pt/[dd]^-{\tau} &  \\
&{}& \mbK^{(i)} \ar[d]_-{\tau^{(i)}_{2}} &  \\
&\mbL\times \overline{\mbfS} \ar[r] \ar[d]^{\xi} 
& \mbL \ar[r]^-{\pi_{1}} \ar[d]^-{\pi_{2}}& \mbG & \\
\mathbf{D} \ar@{}[r]|*{\subset}&\Pn\times \overline{\mbfS} \ar[r] \ar[d]^-{pr_{\overline{\mbfS}}} & \mbP^{n} &  \\
&\overline{\mbfS} & &,
}
\end{equation}
where $\eta$ is the composite
$\overline{R}_{\mbK}\hookrightarrow \mbK\times \overline{\mbfS}
 \xrightarrow{\tau\times \id_{\overline{\mbfS}}} \mbL\times \overline{\mbfS}$,
$\lambda$ is the projection $\overline{R}_{\mbK}=\mbP_{*}(\mcU)\to\mbK$,
$\eta_{3}$, $\bar{\eta}_{3}$ are base-changes of $\lambda$,
$\rho:\overline{R}_{\widetilde{\mcP}^{\dagger}}\to \overline{R}$ is the base-change of $\varphi$,
and $\xi=\pi_{2} \times\id_{\overline{\mbfS}}$.

It is the variety $\overline{R}$ that we wanted to construct.
By construction, we have
\begin{equation}\label{eq-dimRS}
\dim \overline{R}=\dim \overline{\mbfS} +2n-d+1.
\end{equation}

\subsection{Calculation of Canonical bundles}\label{subsect-calcano}
In this section, we carry out the computation of the canonical bundle of the variety 
$\overline{R}$.

We introduce some notation for line bundles.
If $\mcV\to \mbH_{0}$ is an $\mathfrak{S}$-equivariant vector bundle,
we obtain the induced vector bundle $\GLV\times_{\mathfrak{S}}\mcV$ on $\mcP(:=\GLV \times_{\mathfrak{S}}\mbH_{0}).$
The induced line bundle on $\mcP$ obtained from the $\mathfrak{S}$-equivariant line bundle
$\mcO_{\mbH_{0}}(1)$ on $\mbH_{0}$ is denoted by $\mcO_{\mcP}(1)$.

Composing the group homomorphism (\ref{eq-Sto}) and the characters
$H_{0}^{\times}\to\mbC^{\times}$ given by
$\begin{pmatrix}
a & b \\
0 & d
\end{pmatrix}
\mapsto a$
and by
$\begin{pmatrix}
a & b \\
0 & d
\end{pmatrix}
\mapsto d$
respectively, we obtain characters $\alpha$ and $\delta :\mathfrak{S}\to \mbC^{\times}$ of 
$\mathfrak{S}$ respectively.
The character of $\mathfrak{S}$ given by $\mathfrak{S}\subset\GLV\xrightarrow{\det}\mbC^{\times}$
is denoted by $\mathbf{det}$.
For a character $\chi$ of $\mathfrak{S}$,
we write $(\mbC_{\mbH_{0}})_{\chi}$ for the trivial line bundle on $\mbH_{0}$ with the action given by $\chi$,
and $\mcL_{\chi}$ for the line bundle on $\mcP$ induced from $(\mbC_{\mbH_{0}})_{\chi}$.
In other words, the line bundle $\mcL_{\chi}$ is the quotient space of 
$\GLV \times (\mbC \times \mbH_{0})$
by the relation 
\begin{equation}\label{eq-equiv}
(g g\rq{}, a, z) \sim (g, \chi(g\rq{})a, g\rq{}z),
\end{equation}
where $g\rq{}\in \mathfrak{S}$.
For characters $\chi_{1}$ and $\chi_{2}$ of $\mathfrak{S}$, we write $\chi_{1}+\chi_{2}$
additively for their product.
\begin{lem}\label{lem-lbs}
(1) The line bundle $\mcL_{\mathbf{det}}$  on $\mcP$ is a trivial line bundle.

\noindent (2) We have $(\pi_{1}\circ \theta)^{*}\mcO_{\mbG}(1) \simeq \mcL_{\alpha+\delta}$,
where $\mcO_{\mbG}(1)=\det\mcQ$.

\noindent (3) We have $(\pi_{2}\circ \theta)^{*}\mcO_{\Pn}(1) \simeq \mcL_{\delta}$.

\noindent (4) We have 
\[
K_{\mcP/\mbL}  \simeq \mcL_{-2\alpha-\delta}\otimes\mcO_{\mcP}(-3) 
 \simeq (\pi_{1}\circ\theta)^{*}\mcO_{\mbG}(-2)\otimes (\pi_{2}\circ\theta)^{*}\mcO_{\Pn}(1) \otimes \mcO_{\mcP}(-3).
\]

\noindent (5) We have
$K_{\mcP}\simeq (\pi_{1}\circ\theta)^{*}\mcO_{\mbG}(-n-2) \otimes
(\pi_{2}\circ\theta)^{*}\mcO_{\Pn}(-1) \otimes \mcO_{\mcP}(-3)$.
\end{lem}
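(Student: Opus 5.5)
We treat all five items as computations of $\mathfrak{S}$-equivariant line bundles on the homogeneous fibration $\mcP=\GLV\times_{\mathfrak{S}}\mbH_{0}\to\mbL=\GLV/\mathfrak{S}$. A $\GLV$-equivariant line bundle pulled back from $\mbL$ is $\mcL_{\chi}$, where $\chi$ is the character by which $\mathfrak{S}$ acts on its fiber at $(l_{0},p_{0})$. So for (1)--(3) we only need to identify the relevant fibers as $\mathfrak{S}$-modules, being careful with the sign convention in (\ref{eq-equiv}). Fix a basis of $V$ adapted to the flag $\mathsf{p}_{0}\subset \mrH^{0}(l_{0},\mcO(1))$, i.e.\ to the quotient $V\to \mrH^{0}(l_{0},\mcO(1))\simeq\mbC^{2}$. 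Then $\mathfrak{S}$ consists of block upper-triangular matrices whose image in $H_{0}^{\times}$ is $\begin{pmatrix} a & b\\ 0& d\end{pmatrix}$.

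For (1), $\mathbf{det}$ is a character of $\mathfrak{S}$ that extends to the character $\det$ of $\GLV$. Hence the map $(g,a,z)\mapsto(\det(g)^{-1}a\cdot(\text{suitable sign convention}), [g,z])$ trivializes $\mcL_{\mathbf{det}}$ globally. For (2), the fiber of $\mcQ$ at $l_{0}$ is $\mrH^{0}(l_{0},\mcO(1))$, on which $\mathfrak{S}$ acts through $H_{0}^{\times}$, so $\det\mcQ$ has fiber character $\alpha+\delta$. For (3), $\mcO_{\Pn}(1)$ at $p_{0}$ is the quotient $\mrH^{0}(l_{0},\mcO(1))/\mathsf{p}_{0}$. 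With $\mathsf{p}_{0}=\langle(1,0)\rangle$ this quotient is acted on by $d$, giving $\delta$. Here I would check once, directly from (\ref{eq-equiv}), that the convention (left action vs.\ dual) produces $\chi$ and not $-\chi$; this is consistent for (2) and (3) simultaneously.

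For (4), $\mcP\to\mbL$ is the $\mbP^{2}$-bundle $\mbP_{*}$ of the vector bundle $\GLV\times_{\mathfrak{S}}H_{0}$. The relative Euler sequence gives $K_{\mcP/\mbL}\simeq \mcO_{\mcP}(-3)\otimes\det(\GLV\times_{\mathfrak{S}}H_{0})^{\vee}$ (with the sign fixed by the $\mbP_{*}$ convention). We therefore need the character of $\mathfrak{S}$ on $\det H_{0}$. Here $\mathfrak{S}$ acts on $H_{0}$ by left multiplication, since the action on $\widetilde{\mbH_{0}}^{\dagger}$ is via the left action. Left multiplication by $\begin{pmatrix}a&b\\0&d\end{pmatrix}$ on $\{\begin{pmatrix}x&y\\0&w\end{pmatrix}\}$ sends $x\mapsto ax$, $y\mapsto ay+bw$ and $w\mapsto dw$. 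It is triangular with determinant $a^{2}d$, giving the character $2\alpha+\delta$. This yields $\mcL_{-2\alpha-\delta}\otimes\mcO_{\mcP}(-3)$. Rewriting $-2\alpha-\delta=-2(\alpha+\delta)+\delta$ and using (2) and (3) gives the second expression.

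For (5), we use $K_{\mcP}=K_{\mcP/\mbL}\otimes\theta^{*}K_{\mbL}$. The map $\mbL\to\mbG$ is the $\mbP^{1}$-bundle $\mbP(\mcQ)$, so $K_{\mbL/\mbG}=\pi_{2}^{*}\mcO_{\Pn}(-2)\otimes\pi_{1}^{*}\det\mcQ$ and $K_{\mbG}=\mcO_{\mbG}(-n-1)$. Then $K_{\mbL}=\pi_{1}^{*}\mcO_{\mbG}(-n)\otimes\pi_{2}^{*}\mcO_{\Pn}(-2)$. Adding (4) gives exponents $-n-2$ on $\mcO_{\mbG}$ and $1-2=-1$ on $\mcO_{\Pn}$, as claimed. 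An alternative check is to compute $K_{\mbL}$ via its own character ($T_{\mbL}$ at $(l_{0},p_{0})$ is $\mathfrak{gl}(V)/\mathrm{Lie}\,\mathfrak{S}$) and use (1).

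The main obstacle is purely one of sign and duality conventions: whether $\mbP_{*}$ and the relation (\ref{eq-equiv}) yield $\chi$ or $\chi^{-1}$, and left versus right actions on $H_{0}$. I would fix these once by testing on (3), where $\mcO_{\Pn}(1)$ must come out positive, and then apply the same convention throughout.
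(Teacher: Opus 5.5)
Your proposal is correct and follows the paper's proof essentially step by step. Item (1) comes from $\mathbf{det}$ extending to $\GLV$, and items (2)--(3) from the $\mathfrak{S}$-characters on $\wedge^{2}\mrH^{0}(l_{0},\mcO(1))$ and $\mrH^{0}(l_{0},\mcO(1))/\mathsf{p}_{0}$. Item (4) comes from the Euler sequence with $\det H_{0}$ of character $2\alpha+\delta$ under left multiplication. Item (5) comes from $K_{\mbL}\simeq\pi_{1}^{*}\mcO_{\mbG}(-n)\otimes\pi_{2}^{*}\mcO_{\Pn}(-2)$, which you derive via $\mbL=\mbP(\mcQ)$ where the paper simply quotes it.

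The one convention to settle in (1): with the relation (\ref{eq-equiv}), the well-defined trivialization is $[g,a,z]\mapsto([g,z],\det(g)\,a)$, as in the paper, not $\det(g)^{-1}a$.
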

\proof
(1)
The isomorphism
$ \GLV\times \mbC \times \mbH_{0}
\to
\GLV\times \mbC \times \mbH_{0}$
given by $(g,a,z)\mapsto (g, (\det g)a,z)$ transforms the equivalence relation
(\ref{eq-equiv}),
with $\chi$ replaced by $\mathbf{det}$, on the source to the equivalence relation
\begin{equation*}
(gg',a,z)\sim (g,a, g'z)
\end{equation*}
on the target.
From this, (1) follows.

(2)
The group $\mathfrak{S}$ acts on $\mrH^{0}(l_{0}, \mcO(1))$.
The rank $2$ vector bundle $\pi_{1}^{*}\mcQ$ on $\mbL\simeq \GLV/\mathfrak{S}$ is 
isomorphic to $\GLV\times_{\mathfrak{S}}\mrH^{0}(l_{0},\mcO(1))$.
The $\mathfrak{S}$-action on $\wedge^{2}\mrH^{0}(l_{0},\mcO(1))$
is given by the character $\alpha+\delta$.
This proves (2).

(3)
This can be proved similarly as (2).

(4)
We have the Euler sequence
\[
0\to \Omega_{\mbH_{0}}(1)\to
H_{0}^{\vee} \otimes \mcO_{\mbH_{0}}\to \mcO_{\mbH_{0}}(1)
\to 0.
\]
From this, we have an isomorphism
\[
K_{\mbH_{0}}\simeq
(\det H_{0})^{\vee}\otimes \mcO_{\mbH_{0}}(-3).
\]

This implies that $K_{\mcP/\mbL} \simeq \mcL_{-2\alpha-\delta}
\otimes \mcO_{\mcP}(-3)$,
which is isomorphic to
$(\pi_{1}\circ\theta)^{*}\mcO_{\mbG}(-2)\otimes (\pi_{2}\circ\theta)^{*}\mcO_{\Pn}(1) \otimes \mcO_{\mcP}(-3)$
by (2) and (3).

(5)
This follows from (4) and the isomorphism
$K_{\mbL} \simeq\pi_{1}^{*}\mcO_{\mbG}(-n)
\otimes \pi_{2}^{*}\mcO_{\Pn}(-2)$.
\endproof
If $C$ is an $\mathfrak{S}$-invariant curve on $\mbH_{0}$ 
or $\widetilde{\mbH_{0}}^{\dagger}$, then $\GLV\times_{\mathfrak{S}}C$
is a divisor on $\mcP$  or $\widetilde{\mcP}^{\dagger}$.
By abuse of notation, we write $C$ for $\GLV\times_{\mathfrak{S}}C$.
For example, recall the lines $A$ and $\Delta$ on $\mbH_{0}$ defined in
(\ref{eq-ADelta}).
The divisors $\GLV\times_{\mathfrak{S}}A$ and $\GLV\times_{\mathfrak{S}}\Delta$
on $\mcP$ are also written as $A$ and $\Delta$ respectively.
With this notation, we have the following:
\begin{lem}\label{lem-lbs2}
(1) We have $\mcO_{\mcP}(-1)\otimes\mcO_{\mcP}(A) \simeq \mcL_{\alpha}$.

\noindent (2) We have $\mcO_{\mcP}(-1)\otimes\mcO_{\mcP}(\Delta) \simeq \mcL_{\delta}$.
\end{lem}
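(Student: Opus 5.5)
The plan is to exhibit $A$ and $\Delta$ as zero loci of explicit $\mathfrak{S}$-equivariant sections on $\mbH_{0}$, and then induce up to $\mcP=\GLV\times_{\mathfrak{S}}\mbH_{0}$. Since induction $\GLV\times_{\mathfrak{S}}(-)$ takes $\mathfrak{S}$-equivariant line bundles on $\mbH_{0}$ to line bundles on $\mcP$, respects tensor products, and sends equivariant sections to sections with the induced divisor, it suffices to prove two equivariant isomorphisms on $\mbH_{0}$:
\[
\mcO_{\mbH_{0}}(-1)\otimes\mcO_{\mbH_{0}}(A)\simeq(\mbC_{\mbH_{0}})_{\alpha}
\quad\text{and}\quad
\mcO_{\mbH_{0}}(-1)\otimes\mcO_{\mbH_{0}}(\Delta)\simeq(\mbC_{\mbH_{0}})_{\delta}.
\]

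Consider the linear functional $a:H_{0}\to\mbC$ that sends $\begin{pmatrix} a & b\\ 0 & d\end{pmatrix}$ to $a$. Restricting it to the tautological subbundle $\mcO_{\mbH_{0}}(-1)\subset H_{0}\otimes\mcO_{\mbH_{0}}$ gives a map $\mcO_{\mbH_{0}}(-1)\to\mcO_{\mbH_{0}}$. Its vanishing locus is exactly the line $A=(a=0)$, and the vanishing is reduced because $a$ is a linear form. Hence this map identifies $\mcO_{\mbH_{0}}(-1)$ with $\mcO_{\mbH_{0}}(-A)$.

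The next step is to check equivariance. The group $\mathfrak{S}$ acts on $H_{0}$ through the homomorphism (\ref{eq-Sto}) by left multiplication $\sigma\mapsto h\sigma$, and this is the action on $\mbH_{0}$ used to define $\mcP$. For upper triangular matrices the $(1,1)$-entry is multiplicative: $a(h\sigma)=a(h)\,a(\sigma)$, that is, $a(h\sigma)=\alpha(g')\,a(\sigma)$ when $h$ is the image of $g'\in\mathfrak{S}$. So $a$ is an $\mathfrak{S}$-equivariant map $\mcO_{\mbH_{0}}(-1)\to(\mbC_{\mbH_{0}})_{\alpha}$ whose divisor is $A$. This gives the equivariant isomorphism $\mcO_{\mbH_{0}}(-1)\simeq(\mbC_{\mbH_{0}})_{\alpha}\otimes\mcO_{\mbH_{0}}(-A)$. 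Equivalently, $\mcO(-1)\otimes\mcO(A)$ is $(\mbC_{\mbH_{0}})_{\alpha}$, and inducing up proves (1).

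For (2) I will run the same argument with the $(2,2)$-entry $d$ in place of $a$. It is also multiplicative on upper triangular matrices, $d(h\sigma)=d(h)\,d(\sigma)$, so it transforms by $\delta$, and its zero locus is $\Delta=(d=0)$.

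The only delicate point, which I expect to be the main obstacle, is the sign convention. I must confirm that the equivalence relation (\ref{eq-equiv}), $(gg',a,z)\sim(g,\chi(g')a,g'z)$, matches the way the equivariant map $\mcO(-1)\to\mbC_{\alpha}$ descends. If that check produced $\mcL_{-\alpha}$ instead, the statement would be off by a dual. I will verify the convention against Lemma \ref{lem-lbs}(2): there $\pi_{1}^{*}\mcQ=\GLV\times_{\mathfrak{S}}\mrH^{0}(l_{0},\mcO(1))$ has determinant $\mcL_{\alpha+\delta}$ under the same convention. The vector $(a,d)$ of diagonal entries transforms under $\mathfrak{S}$ exactly as the diagonal part of that action, with the same weights $\alpha$ and $\delta$, so the two conventions agree and the signs come out as stated.
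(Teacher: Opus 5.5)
Your proposal is correct and is the same argument as the paper's: the paper just notes that, since $A=(a=0)$ and $\Delta=(d=0)$, one has equivariantly $(\mbC_{\mbH_{0}})_{-\alpha}\simeq\mcO_{\mbH_{0}}(1)\otimes\mcO_{\mbH_{0}}(-A)$ and $(\mbC_{\mbH_{0}})_{-\delta}\simeq\mcO_{\mbH_{0}}(1)\otimes\mcO_{\mbH_{0}}(-\Delta)$ (the duals of your isomorphisms), and then induces to $\mcP$. The sign check you flag is automatic: by (\ref{eq-equiv}), $\mcL_{\chi}$ is by definition the induction of the trivial bundle on which $g'$ acts by $\chi(g')$, which is exactly the target of your equivariant map, so the comparison with Lemma \ref{lem-lbs} is harmless but not needed.
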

\proof
Since $A, \Delta\subset \mbH_{0}$ are defined by $a=0$ and $d=0$ respectively,
we have
\begin{equation*}
\left(\mbC_{\mbH_{0}}\right)_{-\alpha}\simeq
\mcO_{\mbH_{0}}(1)\otimes
\mcO_{\mbH_{0}}(-A)
\quad
\text{and}
\quad
\left(\mbC_{\mbH_{0}}\right)_{-\delta}\simeq
\mcO_{\mbH_{0}}(1)\otimes
\mcO_{\mbH_{0}}(-\Delta).
\end{equation*}
The lemma follows from these.
\endproof
The strict transforms to  $\widetilde{\mbH_{0}}^{\dagger}$ 
of the lines $A$ and $\Delta$ on $\mbH_{0}$ are denoted by the same letters.
The strict transforms to $\widetilde{\mbH_{0}}^{\dagger}$ of the curves $C_{j}^{(i)}$
on $\widetilde{\mbH_{0}}$ are denoted by the same letters.
By construction, we have isomorphisms of sheaves on $\wtPd$
\begin{align}
\eta_{0}^{*}\mcO_{\mcP}(A)\simeq \mcO_{\wtPd}(A),  \label{align-etaAA}\\
\eta_{0}^{*}\mcO_{\mcP}(\Delta)
\simeq
\begin{cases}
\mcO_{\wtPd}\left(\Delta
+\sum_{
\substack{1\leq i \leq c \\ 1\leq j \leq e^{(i)}}
}C^{(i)}_{j}
\right) & \text{if $[0:1]\in\Supp E_{0}$} \\
\mcO_{\wtPd}\left(\Delta
+\sum_{
\substack{1\leq i \leq c \\ 1\leq j \leq e^{(i)}}
}C^{(i)}_{j}
+C_{0}
\right)
& \text{if $[0:1]\notin\Supp E_{0}$}, 
\end{cases}
\\
K_{\widetilde{\mcP}^{\dagger}}
\simeq
\begin{cases}
\eta_{0}^{*}K_{\mcP}
\otimes
\mcO_{\widetilde{\mcP}^{\dagger}}\bigl(
\sum_{
\substack{1\leq i \leq c \\ 1\leq j \leq e^{(i)}}
}C^{(i)}_{j}
\bigr) & \text{if $[0:1]\in\Supp E_{0}$} \\
\eta_{0}^{*}K_{\mcP}
\otimes
\mcO_{\widetilde{\mcP}^{\dagger}}\bigl(
\sum_{
\substack{1\leq i \leq c \\ 1\leq j \leq e^{(i)}}
}C^{(i)}_{j}
\bigr) \otimes \mcO_{\widetilde{\mcP}^{\dagger}}(C_{0}) & \text{if $[0:1]\notin\Supp E_{0}$}. 
\end{cases}
\end{align}
By Lemma \ref{lem-lbs}(5), 
the line bundle 
$\eta_{0}^{*}K_{\mcP}
\otimes
\mcO_{\widetilde{\mcP}^{\dagger}}\bigl(
\sum_{
\substack{1\leq i \leq c \\ 1\leq j \leq e^{(i)}}
}C^{(i)}_{j}
\bigr)$ in the above expression 
is isomorphic to
\begin{equation}\label{align-lb}
 (\pi_{1}\circ \theta\circ \eta_{0})^{*}
\mcO_{\mbG}(-n-2)
\otimes
(\pi_{2}\circ\theta\circ\eta_{0})^{*}
\mcO_{\Pn}(-1)
\otimes \eta_{0}^{*}\mcO_{\mcP}(-3)
\otimes
\mcO_{\widetilde{\mcP}^{\dagger}}\bigl(
\sum_{
\substack{1\leq i \leq c \\ 1\leq j \leq e^{(i)}}
}C^{(i)}_{j}
\bigr).
\end{equation}
In order to make the notation simple,
from now on we omit the pull-back notation of line bundles.
For example, instead of the rigorous expression (\ref{align-lb}),
we will write 
\begin{equation}\label{eq-KtildeP}
\mcO_{\mbG}(-n-2)
\otimes
\mcO_{\Pn}(-1)
\otimes
\mcO_{\mcP}(-3)
\otimes
\mcO_{\widetilde{\mcP}^{\dagger}}\bigl(
\sum_{
\substack{1\leq i \leq c \\ 1\leq j \leq e^{(i)}}
}C^{(i)}_{j}
\bigr).
\end{equation}
After all, we have
\begin{equation}\label{eq-KPdagger}
K_{\widetilde{\mcP}^{\dagger} }\simeq
\begin{cases}
(\text{the line bundle (\ref{eq-KtildeP})} )& \text{if $[0:1]\in\Supp\,E_{0}$} \\
(\text{the line bundle (\ref{eq-KtildeP})} )\otimes 
\mcO_{\widetilde{\mcP}^{\dagger}}(C_{0}) & \text{if $[0:1]\not\in\Supp\,E_{0}$.} 
\end{cases}
\end{equation}
\begin{prop}\label{prop-KR}
The canonical line bundle $K_{\overline{R}_{\widetilde{\mcP}^{\dagger}}}$ 
of $\overline{R}_{ \widetilde{\mcP}^{\dagger} }$
is isomorphic to
\begin{equation}\label{eq-KPRdagger}
\begin{split}
\mcO_{\mbG}\left(
\frac{d(d+1)}{2}-n-2
\right)\otimes\mcO_{\Pn}(-1)\otimes\mcO_{\mcP}(-3) 
\otimes\mcO_{\widetilde{\mcP}^{\dagger}}\bigl(
\sum_{\substack{1\leq i \leq c \\
1\leq j \leq e^{(i)}}
}C_{j}^{(i)}
\bigr)
\\
\otimes\bigotimes_{i=1}^{c}\mcO_{\mbK^{(i)}}(m^{(i)})
\otimes
\mcO_{\overline{\mbfS}}(-\rank\;\mcU)
\end{split}
\end{equation}
if $[0:1]\in \Supp \,E_{0}$, and to
$(\ref{eq-KPRdagger})\otimes\mcO_{ \widetilde{\mcP}^{\dagger} }(C_{0})$
if $[0:1]\notin \Supp \,E_{0}$.
We have
\begin{equation}\label{eq-rhoKR}
\rho^{*}K_{\overline{R}}\simeq
\begin{cases}
K_{\overline{R}_{ \widetilde{\mcP}^{\dagger} }}\otimes
\mcO_{\widetilde{\mcP}^{\dagger}}\left(
-(r-1)(A+C_{1}^{(1)})
\right) & \text{if $[0:1]\in \Supp \,E_{0}$} \\
K_{\overline{R}_{ \widetilde{\mcP}^{\dagger} }}\otimes
\mcO_{\widetilde{\mcP}^{\dagger}}\left(
-(r-1)(A+C_{0} )
\right)& \text{if $[0:1]\notin \Supp \,E_{0}$}.
\end{cases}
\end{equation}
\end{prop}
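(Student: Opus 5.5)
The plan is to compute $K_{\overline{R}_{\widetilde{\mcP}^{\dagger}}}$ from the projective bundle formula over $\widetilde{\mcP}^{\dagger}$, and then get (\ref{eq-rhoKR}) from Riemann--Hurwitz for the quotient by the cyclic group $\mathfrak{S}_{1}$. The main input is the determinant of $\mcU$, read off from (\ref{eq-MUt}).

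\emph{Step 1: the projective bundle formula.} By definition $\overline{R}_{\widetilde{\mcP}^{\dagger}}=\mbP_{*}(\mcU|_{\widetilde{\mcP}^{\dagger}})$. Write $\lambda'$ for its projection to $\widetilde{\mcP}^{\dagger}$. The relative Euler sequence
\[
0\to\mcO\to\lambda'^{*}\mcU\otimes\mcO_{\mbP_{*}(\mcU)}(1)\to T_{\mbP_{*}(\mcU)/\widetilde{\mcP}^{\dagger}}\to 0
\]
gives
\[
K_{\overline{R}_{\widetilde{\mcP}^{\dagger}}}\simeq K_{\widetilde{\mcP}^{\dagger}}\otimes\det\mcU^{\vee}\otimes\mcO_{\mbP_{*}(\mcU)}(-\rank\,\mcU).
\]
Since $\mcU\subset S^{d}V\otimes\mcO_{\mbK}$ and $\overline{R}_{\mbK}\subset\mbK\times\overline{\mbfS}$, the tautological bundle $\mcO_{\mbP_{*}(\mcU)}(1)$ is the restriction of $\mcO_{\overline{\mbfS}}(1)$. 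This accounts for the factor $\mcO_{\overline{\mbfS}}(-\rank\,\mcU)$.

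\emph{Step 2: the determinant of $\mcU$.} By (\ref{eq-MUt}),
\[
\det\mcU\simeq\det\mcM^{d}_{\mbG}\otimes\bigotimes_{i}\mcO_{\mbK^{(i)}}(-m^{(i)}).
\]
Because $\mcM^{d}_{\mbG}$ is the kernel of $S^{d}V\otimes\mcO\to S^{d}\mcQ$, we get $\det\mcM^{d}_{\mbG}\simeq(\det S^{d}\mcQ)^{-1}$. For rank $2$, $\det S^{d}\mcQ=(\det\mcQ)^{d(d+1)/2}$, so $\det\mcM^{d}_{\mbG}\simeq\mcO_{\mbG}(-d(d+1)/2)$. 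Substituting this and the expression (\ref{eq-KPdagger}) for $K_{\widetilde{\mcP}^{\dagger}}$ into Step 1 gives (\ref{eq-KPRdagger}), with the extra $\mcO(C_{0})$ in the case $[0:1]\notin\Supp E_{0}$.

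\emph{Step 3: reduction to a ramification computation.} The map $\rho$ is the base change of $\varphi:\widetilde{\mcP}^{\dagger}\to\widetilde{\mcP}^{\dagger}/\mathfrak{S}_{1}$ along the smooth projective bundle map $\bar{\eta}_{3}$. Hence $\rho$ is again a quotient by $\mathfrak{S}_{1}$, and its ramification divisor is the pullback of that of $\varphi$. By Hurwitz,
\[
K_{\overline{R}_{\widetilde{\mcP}^{\dagger}}}=\rho^{*}K_{\overline{R}}\otimes\mcO\bigl((r-1)R_{\varphi}\bigr),
\]
where $R_{\varphi}$ is the union of the divisors fixed pointwise by $\mathfrak{S}_{1}$, each of which has stabilizer all of $\mathfrak{S}_{1}$ by cyclicity of order $r$. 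So it suffices to show that $R_{\varphi}=A+C_{1}^{(1)}$, respectively $A+C_{0}$. Since $\mathfrak{S}_{1}$ acts fibrewise over $\mbL$, this can be checked on $\widetilde{\mbH_{0}}^{\dagger}$.

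\emph{Step 4: the fixed divisors, which is the main obstacle.} The generator $\mathrm{diag}(1,\zeta)$ acts on the right by $[a:b:d]\mapsto[a:\zeta b:\zeta d]$.
\begin{itemize}
\item On $\mbH_{0}$ its fixed locus is the line $A=(a=0)$ together with the point $[1:0:0]$.
\item At $[1:0:0]$ the action in the local coordinates $(u,v)$ is $(u,v)\mapsto(\zeta u,\zeta v)$, a scalar. Hence the exceptional curve over this point is fixed pointwise. This curve is $C_{1}^{(1)}$ if $q_{1}^{(1)}=[0:1]\in\Supp E_{0}$, and $C_{0}$ otherwise.
\item The other blown-up points $[\sigma^{(i)}_{j}]$ with $a^{(i)}_{j}\neq 0$ are permuted freely, since $\mathfrak{S}_{1}$ acts on the $a^{(i)}_{j}$ by multiplication by powers of $\zeta$. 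So their exceptional curves $C^{(i)}_{j}$ are not fixed.
\item Isolated fixed points on $A$ and on the exceptional curves contribute nothing in codimension one.
\end{itemize}
I expect the care to lie here: checking that no other curve is fixed, and keeping the sign and twist conventions for $\mbP_{*}$ and $\det\mcQ$ consistent. With $R_{\varphi}$ identified, Step 3 gives (\ref{eq-rhoKR}).
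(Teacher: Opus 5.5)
Your proposal is correct and follows essentially the route the paper takes when it defers to \cite[Proposition 5.3]{A24}. That route is the relative Euler sequence for $\mbP_{*}(\mcU)$, with $\det\mcU$ read off from (\ref{eq-MUt}), followed by Hurwitz for the $\mathfrak{S}_{1}$-quotient, which is ramified with index $r$ exactly along $A$ and $C_{1}^{(1)}$ (resp.\ $C_{0}$). The one step to repair is the claim that each fixed divisor has pointwise stabilizer all of $\mathfrak{S}_{1}$ ``by cyclicity'': this does not follow from cyclicity. It does follow from your Step 4 computation run for every element $\mathrm{diag}(1,\zeta^{k})$ with $\zeta^{k}\neq 1$, each of which has the same fixed locus $A\cup\{[1:0:0]\}$ on $\mbH_{0}$. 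Also, there are no isolated fixed points to discard, since $A$ is fixed pointwise.
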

\proof
See the proof of \cite[Proposition 5.3]{A24}.
\endproof
From (\ref{eq-muback}), we see that the pull-back of $\mcO_{\mbK^{(i)}}(-1)$
to $\widetilde{\mcP}^{\dagger}$ is isomorphic to
\begin{equation}\label{eq-K-1}
\eta_{0}^{*}\mcO_{\mcP}(-e^{(i)})\otimes
\mcO_{\widetilde{\mcP}^{\dagger}}\left(
\sum_{j=1}^{e^{(i)}}C_{j}^{(i)}
\right).
\end{equation}

Recall that $\mbfD\subset \Pn\times \overline{\mbfS}$ (cf. (\ref{eq-mbfD})) is the universal family of degree $d$ hypersurfaces.
We want to study the pull-back of $\mbfD$ to $\overline{R}_{\wtPd}$.
First we study its pull-back to $\overline{R}_{\mbK}$.
Define divisors $I$ and $J^{(i_{0})}$, $1\leq i_{0} \leq c$, of $\overline{R}_{\mbK}$ by
\begin{align*}
I&:=\left\{ \left.
(l,p,E^{(i)}, 1\leq i \leq c; B)\right| l\subset B
\right\} \\
\intertext{and} 
J^{(i_{0})} &:=\left\{ \left.
(l,p,E^{(i)}, 1\leq i \leq c; B)\right| p\in \Supp\, E^{(i_{0})}
\right\}. 
\end{align*}
On $\overline{R}_{\mbK}$, there are natural maps (cf. \ref{eq-MUt})
\[
(pr_{\overline{\mbfS}}\circ \xi \circ \eta)^{*}
\mcO_{\overline{\mbfS}}(-1)
\to\lambda^{*}\mcU
\to \lambda^{*}\bigotimes_{i=1}^{c}\tau_{1}^{(i)*}\mcO_{\mbKi}(-m^{(i)}),
\]
and the vanishing locus of their composite is the divisor $I$.
So we have
\begin{equation}\label{eq-I}
\mcO_{\overline{R}_{\mbK}}(I)
\simeq
(pr_{\overline{\mbfS}}\circ \xi \circ \eta)^{*}
\mcO_{\overline{\mbfS}}(1)\otimes
\lambda^{*}\bigotimes_{i=1}^{c}\tau_{1}^{(i)*}\mcO_{\mbKi}(-m^{(i)}).
\end{equation}
If we define a divisor $\overline{J}^{(i_{0})}$ of $\mbK^{(i_{0})}$ by
\[
\mbK^{(i_{0})}
\supset
\overline{J}^{(i_{0})}
:=\left\{
(l,p,E^{(i_{0})}) \mid p\in \Supp\,E^{(i_{0})}
\right\},
\]
then the divisor $J^{(i_{0})}\subset \overline{R}_{\mbK}$ is the pull-back of $\overline{J}^{(i_{0})}$
to $\overline{R}_{\mbK}$.
On $ \mbK^{(i_{0})} $,
there are natural maps
\[
\mcO_{\mbK^{(i_{0})}}(-1)
\to
(\pi_{1}\circ \tau_{2}^{(i_{0})})^{*}S^{ e^{(i_{0})} }\mcQ
\to
(\pi_{2}\circ \tau_{2}^{(i_{0})})^{*}
\mcO_{\Pn}(e^{(i_{0})}),
\]
where the right arrow is the pull-back by $\tau_{2}^{(i_{0})}$ of the
$e^{(i_{0})}$-symmetric tensor of the natural surjective map
$\pi_{1}^{*}\mcQ\to\pi_{2}^{*}\mcO_{\Pn}(1)$ on $\mbL$.
The vanishing locus of their composite is $\overline{J}^{(i_{0})}$,
so we have
\begin{equation}
\mcO_{ \mbK^{(i_{0})} }( \overline{J}^{(i_{0})})
\simeq
\mcO_{ \mbK^{(i_{0})} }(1)
\otimes(\pi_{2}\circ \tau_{2}^{(i_{0})})^{*}
\mcO_{\Pn}(e^{(i_{0})}).
\end{equation}
The support of the divisors $(\xi\circ \eta)^{*}\mbfD$ 
and $I+\sum_{i=1}^{c}J^{(i)}$ on $\overline{R}_{\mbK}$ are equal.
Taking multiplicity into account, we have
\begin{equation}\label{eq-pullD}
(\xi\circ \eta)^{*}\mbfD=
I+\sum_{i=1}^{c}m^{(i)}J^{(i)}.
\end{equation}
The pull-backs to $\overline{R}$ of the divisors $I$ and $J^{(i)}$ on $\overline{R}_{\mbK}$
are denoted by $I_{\overline{R}}$ and $J^{(i)}_{\overline{R}}$ respectively.
\begin{prop}\label{prop-IJi}
\begin{enumerate}
\item The divisor $I_{\barR}$ on $\barR$ is smooth.
\item The divisor $I_{\barR}+\left(
        \sum_{i=1}^{c}J_{\overline{R}}^{(i)}
        \right)_{\mathrm{red}}$ on $\barR$ is SNC.
\item We have an isomorphism of sheaves on $\barR_{\wtPd}$
\begin{equation}\label{eq-ORIR}
\rho^{*}\mcO_{\barR}\left(
\left(
\sum_{i=1}^{c}J_{\overline{R}}^{(i)}
\right)_{\mathrm{red}}
\right)
\simeq
\begin{cases}
\mcO_{\wtPd}\left(
\Delta
+\sum_{
\substack{1\leq i \leq c \\ 1\leq j \leq e^{(i)} }
}C_{j}^{(i)}
+(r-1)C_{1}^{(1)}
\right)
& \text{if $[0:1]\in\Supp\, E_{0}$} \\
\mcO_{\wtPd}\left(
\Delta
+\sum_{
\substack{1\leq i \leq c \\ 1\leq j \leq e^{(i)} }
}C_{j}^{(i)}
+rC_{0}
\right)
& \text{if $[0:1]\notin\Supp\, E_{0}$} 
\end{cases}
\end{equation}
where we omitted the notation of pull-back to $\overline{R}_{\wtPd}$ on the right-hand side.
\end{enumerate}
\end{prop}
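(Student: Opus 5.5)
The plan is to reduce everything to local computations on $\widetilde{\mcP}^{\dagger}$ and on the fibers of $\barR_{\mbK}\to\mbK$, exploiting homogeneity. Since $\GLV$ acts transitively on $\mbL$ and all divisors in question are $\GLV$-invariant, it suffices to work over the fiber over $(l_{0},p_{0})$. There $\barR_{\widetilde{\mcP}^{\dagger}}$ is, locally over $\widetilde{\mbH_{0}}^{\dagger}$, the projective bundle $\mbP_{*}(\mcU)$. The map $\rho$ is the quotient by the cyclic group $\mathfrak{S}_{1}$, whose fixed locus in the relevant region is the divisor $A$ together with $C^{(1)}_{1}$ (resp. $C_{0}$). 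This matches the ramification formula (\ref{eq-rhoKR}).

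\textbf{Part (1).} By (\ref{eq-I}), $I$ is the zero locus of the composite $\mcO_{\overline{\mbfS}}(-1)\to\lambda^{*}\mcU\to\lambda^{*}\bigotimes_{i}\tau_{1}^{(i)*}\mcO_{\mbKi}(-m^{(i)})$. By (\ref{eq-MUt}), fiberwise over $\mbK$ this is the sub-projective bundle $\mbP_{*}((\pi_{1}\circ\tau)^{*}\mcM^{d}_{\mbG})\subset\mbP_{*}(\mcU)$. This is a projective subbundle of codimension one, hence smooth over $\mbK$, and its pull-back to $\barR$ is smooth over the smooth variety $\widetilde{\mcP}^{\dagger}/\mathfrak{S}_{1}$. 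Thus $I_{\barR}$ is smooth; this is exactly the setting of Proposition \ref{prop-TPU}.

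\textbf{Part (3).} The divisors $J^{(i)}$ are pulled back from $\mbK$, so I would compute on $\widetilde{\mbH_{0}}^{\dagger}$. The locus where $p_{0}\in\Supp\,(S^{e^{(i)}}\sigma)(E_{0}^{(i)})$ in $\mbH_{0}$ is given by the vanishing at $p_{0}=[1:0]$ of $\prod_{j}(a^{(i)}_{j}a+b\ldots)$ evaluated along the first coordinate. Using the local expression (\ref{eq-local}), the image of a point $q_{j}^{(i)}=a_{j}^{(i)}$ under $\sigma=\begin{pmatrix}a&b\\0&d\end{pmatrix}$ reaches $p_{0}$ exactly when $d=0$. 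This gives $\Delta$ in $\mbH_{0}$, plus the exceptional curves $C^{(i)}_{j}$ (and $C_{0}$), on which the limiting divisor contains $p_{0}$. Reducedly, $\bigl(\sum_{i}J^{(i)}\bigr)_{\mathrm{red}}$ pulled back to $\widetilde{\mcP}^{\dagger}$ is therefore supported on $\Delta\cup\bigcup C^{(i)}_{j}(\cup C_{0})$. The coefficients are obtained by comparing with $\rho$. The $J$-divisor on $\barR$ is reduced, but $\rho$ ramifies to order $r$ along $C^{(1)}_{1}$ (resp. $C_{0}$), which are fixed by $\mathfrak{S}_{1}$. The pull-back then has multiplicity $r$ there, accounting for the extra $(r-1)C^{(1)}_{1}$ (resp. $rC_{0}$, since $C_{0}$ is not otherwise among the $C^{(i)}_{j}$). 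The multiplicity along $\Delta$ and the other $C^{(i)}_{j}$ is $1$: in the charts $(u,v/u)$, $(u/v,v)$ the defining function of $J$ vanishes to first order, and these curves are not pointwise fixed by $\mathfrak{S}_{1}$.

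\textbf{Part (2).} Here I would check SNC in local coordinates on $\widetilde{\mbH_{0}}^{\dagger}$ times the fiber coordinates of $\mbP_{*}(\mcU)$. Since $I$ is a linear subbundle transverse to the fibers over $\mbK$, and the $J$'s are pulled back from the base, the SNC property reduces to the components of $J$ meeting transversally on $\widetilde{\mcP}^{\dagger}/\mathfrak{S}_{1}$. The curves $\Delta$, $C^{(i)}_{j}$ and $C_{0}$ on the blown-up plane meet transversally: blowing up separates the points of $\Delta$. On the quotient one must check that images of transversal curves remain transversal, via the local model $(x,y)\mapsto(x,y^{r})$ around the fixed curves.

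The main obstacle I expect is the local analysis at the $\mathfrak{S}_{1}$-fixed points, where $A$ meets $C^{(1)}_{1}$ or $C_{0}$. The task there is to confirm both the SNC property on the quotient and the precise multiplicity $r$ in (3).
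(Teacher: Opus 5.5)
Your proposal is correct and follows essentially the same route as the paper. Part (1) holds because $I_{\barR}$ is a projective subbundle over the smooth variety $\wtPd/\mathfrak{S}_{1}$, and parts (2)--(3) come from two facts: upstairs the support of the $J$-divisor is $\Delta+\sum C^{(i)}_{j}$ (plus $C_{0}$), and $\varphi$ is ramified of order $r$ exactly along $A$ and $C^{(1)}_{1}$ (resp.\ $C_{0}$).

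Two small corrections. First, $A$ never meets $C^{(1)}_{1}$ or $C_{0}$, since the centre $[a:b:d]=[1:0:0]$ of that blow-up lies off $A=(a=0)$. The fixed points you actually need to check with your local model are therefore $\Delta\cap C^{(1)}_{1}$ (resp.\ $\Delta\cap C_{0}$) and $\Delta\cap A$. Second, the multiplicity one along $\Delta$ and the remaining $C^{(i)}_{j}$ comes only from $\varphi$ being unramified there. It does not come from first-order vanishing: the unreduced pull-back of $\overline{J}^{(i_{0})}$ has multiplicity $e^{(i_{0})}$ along $\Delta$.
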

\proof
(1)
Since $I_{\overline{R}}$ is a projective bundle over the smooth variety $\wtPd/\mathfrak{S}_{1}$, 
it is smooth.

(2)
In order to study $(J_{\overline{R}}^{(i)})_{\mathrm{red}}$, we first consider
the divisor $(\tau_{1}^{(i)}\circ\psi\circ\varphi)^{*}\overline{J}^{(i_{0})}$.
By the local description of the map $\widetilde{\mu_{0}}$,
we find that
\begin{equation}\label{eq-tpvarphi}
(\tau_{1}^{(i_{0})}\circ \psi\circ\varphi)^{*}\overline{J}^{(i_{0})}
=
e^{(i_{0})} \Delta+
\sum_{j=1}^{ e^{(i_{0})} }( e^{(i_{0})}-1)C_{j}^{(i_{0})}
+\sum_{
\substack{i\neq i_{0} \\ 1\leq j \leq e^{(i)} }
}e^{(i_{0})}C_{j}^{(i)}
\end{equation}
if $[0:1]\in \Supp\, E_{0}$,
and
\begin{equation}\label{eq-tpvarphi2}
(\tau_{1}^{(i_{0})}\circ \psi\circ\varphi)^{*}\overline{J}^{(i_{0})}
=
e^{(i_{0})} \Delta+
\sum_{j=1}^{ e^{(i_{0})} }( e^{(i_{0})}-1)C_{j}^{(i_{0})}
+\sum_{
\substack{i\neq i_{0} \\ 1\leq j \leq e^{(i)} }
}e^{(i_{0})}C_{j}^{(i)}
+e^{(i_{0})}C_{0}
\end{equation}
if $[0:1]\not\in \Supp\, E_{0}$,
where $\Delta$, $C_{j}^{(i_{0})}$ and $C_{0}$ are divisors on $\widetilde{\mcP}^{\dagger}$
(see the paragraph before and after Lemma \ref{lem-lbs2}).
It follows from this that 
\begin{equation}
\Supp\,
\varphi^{*}
\psi^{*}\sum_{i=1}^{c}\tau_{1}^{(i)*}\overline{J}^{(i)}
=
\begin{cases}
\Delta+\sum_{\substack{1\leq i\leq c \\ 1\leq j \leq e^{(i)} }}C_{j}^{(i)}
& \text{ if $[0:1]\in\Supp\, E_{0}$} \\
\Delta+\sum_{\substack{1\leq i\leq c \\ 1\leq j \leq e^{(i)} }}C_{j}^{(i)}+C_{0}
&\text{ if $[0:1]\notin\Supp\, E_{0}$}. 
\end{cases}
\end{equation}
This shows that the divisor $\left(\varphi^{*}
\psi^{*}\sum_{i=1}^{c}\tau_{1}^{(i)*}\overline{J}^{(i)}
\right)_{\mathrm{red}}$ on $\wtPd$ is SNC.
Since the morphism $\varphi:\wtPd\to\wtPd/\mathfrak{S}_{1}$ is ramified along the divisors $A$ and $C^{(1)}_{1}$
(resp. $A$ and $C_{0}$)
if $[0:1]\in\Supp\, E_{0}$ (resp. $[0:1]\not\in\Supp\, E_{0}$),
we see that
the divisor $\left(
\psi^{*}\sum_{i=1}^{c}\tau_{1}^{(i)*}\overline{J}^{(i)}
\right)_{\mathrm{red}}$ on $\wtPd/\mathfrak{S}_{1}$ is SNC,
and
\begin{equation}\label{eq-descri}
\varphi^{*}
\left(
\left(\psi^{*}\sum_{i=1}^{c}\tau_{1}^{(i)*}\overline{J}^{(i)}\right)_{\mathrm{red}}
\right)
=
\begin{cases}
\Delta+\sum_{i=1}^{c}\sum_{j=1}^{e^{(i)}}C_{j}^{(i)}+(r-1)C_{1}^{(1)} & \text{if $[0:1]\in\Supp\, E_{0}$} \\
\Delta+\sum_{i=1}^{c}\sum_{j=1}^{e^{(i)}}C_{j}^{(i)}+rC_{0} & \text{if $[0:1]\not\in\Supp\, E_{0}$}. 
\end{cases}
\end{equation}
Since $\left(
        \sum_{i=1}^{c}J_{\overline{R}}^{(i)}
        \right)_{\mathrm{red}}=\bar{\eta}_{3}^{*}\left(
\psi^{*}\sum_{i=1}^{c}\tau_{1}^{(i)*}\overline{J}^{(i)}
\right)_{\mathrm{red}}$
and 
$I_{\overline{R}}$ is a projective bundle over  $\wtPd/\mathfrak{S}_{1}$,
(2) follows.

(3) follows from (\ref{eq-descri}).
\endproof
We come to the main result of this section.
\begin{prop}\label{prop-KRIJ}
The line bundle $\rho^{*}K_{\overline{R}}\left(
I_{\overline{R}}+\left(
\sum_{i=1}^{c}J_{\overline{R}}^{(i)}
\right)_{\mathrm{red}}
\right)$
is isomorphic to
\begin{equation}\label{eq-33A}
\begin{split}
\mcO_{\barR}(2I_{\barR})\otimes
\mcO_{\mbG}\left(
\frac{(d-1)(d-2)}{2}-n
\right) 
\otimes
\mcO_{\Pn}\left(
2\sum_{i=1}^{c}e^{(i)}-4
\right) \\
\otimes 
\mcO_{\wtPd}\left(\Delta+
\left(
2\sum_{i=1}^{c}e^{(i)}-r-2
\right)A
\right)
\otimes
\bigotimes_{i=1}^{c}
\left\{
\mcO_{\mbK^{(i)}}(1)
\otimes\mcO_{\mbG}( e^{(i)} )
\right\}^{\otimes (2m^{(i)}-2)}
\\
\otimes
\mcO_{\overline{\mbfS}}(-\rank\;\mcU-1)
\end{split}
\end{equation}
if $[0:1]\in\Supp\, E_{0}$, and
to $(\ref{eq-33A})\otimes\eta^{*}_{3}\mcO_{\wtPd}(2C_{0})$
if $ [0:1]\not\in\Supp\, E_{0}$,
where we omitted the notation of pull-back to $\overline{R}_{\wtPd}$ in (\ref{eq-33A}).
\end{prop}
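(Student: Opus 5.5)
We compute everything on $\barR_{\wtPd}$ as a product of the basic line bundles $\mcO_{\mbG}(1)$, $\mcO_{\Pn}(1)$, $\mcO_{\mcP}(1)$, $\mcO_{\wtPd}(A)$, $\mcO_{\wtPd}(\Delta)$, $\mcO_{\wtPd}(C_{j}^{(i)})$, $\mcO_{\wtPd}(C_{0})$, $\mcO_{\mbK^{(i)}}(1)$, $\mcO_{\overline{\mbfS}}(1)$ and $\mcO_{\barR}(I_{\barR})$, pulled back as needed. The strategy is to add three known expressions: $\rho^{*}K_{\barR}$, from Proposition \ref{prop-KR} (formulas (\ref{eq-KPRdagger}) and (\ref{eq-rhoKR})); $\rho^{*}\mcO_{\barR}(I_{\barR})$; and $\rho^{*}\mcO_{\barR}((\sum_{i}J^{(i)}_{\barR})_{\mathrm{red}})$, from (\ref{eq-ORIR}). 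We then rewrite the sum into the shape (\ref{eq-33A}).

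First, rewrite $\mcO(I_{\barR})$ as $\mcO(2I_{\barR})\otimes\mcO(-I_{\barR})$. By (\ref{eq-I}),
$$\mcO(-I_{\barR})\simeq \mcO_{\overline{\mbfS}}(-1)\otimes\bigotimes_{i}\mcO_{\mbK^{(i)}}(m^{(i)}).$$
This accounts for the factors $\mcO(2I_{\barR})$ and $\mcO_{\overline{\mbfS}}(-\rank\,\mcU-1)$ in (\ref{eq-33A}). After this step the total $\mcO_{\mbK^{(i)}}$-exponent is $2m^{(i)}$.

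Next, split $\mcO_{\mbK^{(i)}}(2m^{(i)})$ as $\mcO_{\mbK^{(i)}}(2m^{(i)}-2)\otimes\mcO_{\mbK^{(i)}}(2)$. Convert the remaining $\mcO_{\mbK^{(i)}}(2)$ using (\ref{eq-K-1}), which gives $\mcO_{\mcP}(2e^{(i)})\otimes\mcO(-2\sum_{j}C_{j}^{(i)})$. Also insert $\mcO_{\mbG}(e^{(i)}(2m^{(i)}-2))$ and compensate for it.

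Using $d=\sum m^{(i)}e^{(i)}$, the $\mcO_{\mbG}$-exponent becomes
$$\frac{d(d+1)}{2}-n-2-(2d-2\textstyle\sum e^{(i)}).$$
This must be reconciled with $\frac{(d-1)(d-2)}{2}-n=\frac{d(d+1)}{2}-2d+1-n$. The mismatch has to be absorbed by converting $\mcO_{\mcP}$-powers into $A$, $\Delta$ and characters via Lemmas \ref{lem-lbs} and \ref{lem-lbs2}. Concretely,
$$\mcO_{\mcP}(1)\simeq\mcO(A)\otimes\mcL_{-\alpha}\simeq\mcO(\Delta)\otimes\mcL_{-\delta},$$
with $\mcL_{\alpha+\delta}=\mcO_{\mbG}(1)$ and $\mcL_{\delta}=\mcO_{\Pn}(1)$. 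Also use (\ref{align-etaAA}) and the formula for $\eta_{0}^{*}\mcO_{\mcP}(\Delta)$, which shows that the pullback of $\Delta$ on $\mcP$ is $\Delta+\sum C_{j}^{(i)}$ $(+C_{0})$.

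Collecting the $\mcO_{\mcP}$-terms gives exponent $2\sum e^{(i)}-3$. We write this as one $\Delta$-type factor plus $(2\sum e^{(i)}-4)$ $A$-type factors. The resulting $\mcL_{-\alpha}$ and $\mcL_{-\delta}$ powers become $\mcO_{\mbG}$ and $\mcO_{\Pn}$ powers, using $\mcL_{-\alpha}=\mcO_{\mbG}(-1)\otimes\mcO_{\Pn}(1)$.

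Then collect the $C_{j}^{(i)}$ coefficients:
\begin{itemize}
\item $+1$ from $K$;
\item $+1$ from $J_{\mathrm{red}}$;
\item $-2$ from the $\mcO_{\mbK^{(i)}}(2)$ conversion;
\item $+1$ from the $\Delta$-conversion, with the $\Delta$-factor accounted for separately.
\end{itemize}
The net contribution must be $0$, except that $C_{1}^{(1)}$ (respectively $C_{0}$) also receives $-(r-1)$ from (\ref{eq-rhoKR}) and $+(r-1)$ (respectively $+r$) from (\ref{eq-ORIR}). In the case $[0:1]\notin\Supp E_{0}$, $C_{0}$ further picks up $+1$ from $K_{\wtPd}$ and a contribution from the $\Delta$ pullback. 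This leftover is what produces the extra $2C_{0}$.

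The $A$-coefficient is $(2\sum e^{(i)}-4)-(r-1)+1=2\sum e^{(i)}-r-2$, matching (\ref{eq-33A}). The $\mcO_{\Pn}$-exponent is $-1$ plus the contributions of the character conversions, and it should come out to $2\sum e^{(i)}-4$.

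I expect the main obstacle to be bookkeeping: choosing how to split $\mcO_{\mcP}$-powers between $A$-type and $\Delta$-type so that the $\mcO_{\mbG}$ and $\mcO_{\Pn}$ exponents land exactly on the stated values, with all $C_{j}^{(i)}$ cancelling. I would first fix the $\Delta$-coefficient to be $1$ and the $A$-coefficient to be $2\sum e^{(i)}-r-2$, then solve for the character exponents and check them against Lemma \ref{lem-lbs}. Because (\ref{eq-rhoKR}) involves the ramification of $\varphi$ along $A$ and $C_{1}^{(1)}$ (or $C_{0}$), I would treat the two cases $[0:1]\in\Supp E_{0}$ and $[0:1]\notin\Supp E_{0}$ in parallel, tracking the $C_{0}$ terms separately to obtain the extra $\eta_{3}^{*}\mcO_{\wtPd}(2C_{0})$.
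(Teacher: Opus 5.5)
Your route is the paper's: combine (\ref{eq-rhoKR}) with (\ref{eq-ORIR}), insert (\ref{eq-KPRdagger}), rewrite $\mcO(I_{\barR})$ as $\mcO(2I_{\barR})\otimes\mcO_{\barS}(-1)\otimes\bigotimes_{i}\mcO_{\mbK^{(i)}}(m^{(i)})$ via (\ref{eq-I}), and turn $\mcO_{\mbK^{(i)}}(2)$ into $\mcO_{\mcP}(2e^{(i)})\otimes\mcO(-2\sum_{j}C^{(i)}_{j})$ via (\ref{eq-K-1}). Everything up to the total $\mcO_{\mcP}(2\sum_{i}e^{(i)}-3)$ is right.

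The step that fails is the split you then choose: one $\Delta$-type factor plus $2\sum_{i}e^{(i)}-4$ $A$-type factors. Because $\eta_{0}^{*}\mcO_{\mcP}(\Delta)\simeq\mcO_{\wtPd}(\Delta+\sum_{i,j}C^{(i)}_{j})$ (with an extra $C_{0}$ when $[0:1]\notin\Supp E_{0}$), this choice breaks every coefficient:
\begin{itemize}
\item Your own tally for each $C^{(i)}_{j}$ is $+1+1-2+1=+1$, not $0$.
\item The $\Delta$-coefficient becomes $2$, since (\ref{eq-ORIR}) already supplies one $\Delta$.
\item The $C_{0}$-coefficient becomes $-(r-1)+1+r+1=3$ rather than $2$.
\item The $A$-coefficient is $(2\sum_{i}e^{(i)}-4)-(r-1)=2\sum_{i}e^{(i)}-r-3$; the extra $+1$ you add has no source.
\item The $\mcO_{\Pn}$-exponent is $-1-1+(2\sum_{i}e^{(i)}-4)=2\sum_{i}e^{(i)}-6$.
\item Since $\mcL_{-\delta}=\mcO_{\Pn}(-1)$ contributes nothing to $\mcO_{\mbG}$, the $\mcO_{\mbG}$-mismatch you noticed (exactly $2\sum_{i}e^{(i)}-3$) is absorbed only up to $2\sum_{i}e^{(i)}-4$.
\end{itemize}
The bundle you would get is still isomorphic to the correct one, since on $\mcP$ one has $\mcO(\Delta)\otimes\mcL_{-\delta}\simeq\mcO_{\mcP}(1)\simeq\mcO(A)\otimes\mcL_{-\alpha}$. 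But it is not the expression (\ref{eq-33A}), and the coefficient matchings you assert are false.

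The missing choice, which is what the paper does, is to convert all $2\sum_{i}e^{(i)}-3$ copies of $\mcO_{\mcP}(1)$ through $A$. Use $\mcO_{\mcP}(A)\simeq\mcO_{\mbG}(1)\otimes\mcO_{\Pn}(-1)\otimes\mcO_{\mcP}(1)$ (Lemmas \ref{lem-lbs}(2),(3) and \ref{lem-lbs2}(1)) together with $\eta_{0}^{*}\mcO_{\mcP}(A)\simeq\mcO_{\wtPd}(A)$, which creates no exceptional curves. Then:
\begin{itemize}
\item The single $\Delta$ in (\ref{eq-33A}) comes only from (\ref{eq-ORIR}).
\item Each $C^{(i)}_{j}$ gets $+1+1-2=0$; for $C^{(1)}_{1}$ the terms $\mp(r-1)$ also cancel.
\item $C_{0}$ gets $-(r-1)+1+r=2$.
\item $A$ gets $(2\sum_{i}e^{(i)}-3)-(r-1)=2\sum_{i}e^{(i)}-r-2$.
\item $\mcO_{\Pn}$ gets $-1+(2\sum_{i}e^{(i)}-3)$.
\item $\mcO_{\mbG}$ gets $\frac{d(d+1)}{2}-n-2-(2\sum_{i}e^{(i)}-3)=\frac{(d-1)(d-2)}{2}-n+\sum_{i}e^{(i)}(2m^{(i)}-2)$, and the last summand is absorbed into $\bigotimes_{i}\{\mcO_{\mbK^{(i)}}(1)\otimes\mcO_{\mbG}(e^{(i)})\}^{\otimes(2m^{(i)}-2)}$.
\end{itemize}
Your fallback of fixing the $\Delta$-coefficient at $1$ and solving for the rest would force exactly this all-$A$ choice, so the repair is small. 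As written, however, the proposal does not reach (\ref{eq-33A}).
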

\proof
We give a proof only in the case $[0:1]\in\Supp\,E_{0}$.

In the proof,
we use the isomorphism
\begin{equation}\label{eq-DelA}
\mcO_{\mcP}(A)\simeq\mcO_{\mbG}(1)\otimes\mcO_{\Pn}(-1)
\otimes\mcO_{\mcP}(1),
\end{equation}
which is a  consequence of Lemma \ref{lem-lbs} and Lemma \ref{lem-lbs2}.

Combining (\ref{eq-rhoKR}) and (\ref{eq-ORIR}), we find
that
$\rho^{*}K_{\overline{R}}\left(
I_{\overline{R}}+\left(
\sum_{i=1}^{c}J_{\overline{R}}^{(i)}
\right)_{\mathrm{red}}
\right)$
is isomorphic to
\begin{equation}\label{eq-KRIR}
\mcO_{\barR}(I_{\barR})
\otimes
K_{\overline{R}_{\wtPd}}\otimes
\mcO_{\wtPd}\left(
\Delta+\sum_{\substack{1\leq i \leq c \\ 1\leq j \leq e^{(i)}}}
C^{(i)}_{j} -(r-1)A
\right).
\end{equation}
Using (\ref{eq-KPRdagger}), we see that the line bundle (\ref{eq-KRIR}) is isomorphic to
\begin{equation}\label{eq-KRIR2}
\begin{split}
\mcO_{\barR}(I_{\barR})\otimes
\mcO_{\mbG}\left(
\frac{d(d+1)}{2}-n-2
\right)\otimes\mcO_{\Pn}(-1)
\otimes\mcO_{\mcP}(-3)
\otimes
\bigotimes_{i=1}^{c}\mcO_{\mbK^{(i)}}(m^{(i)}) \\
\otimes
\mcO_{\barS}(-\rank\,\mcU)
\otimes
\mcO_{\wtPd}\left(
\Delta+2\sum_{\substack{1\leq i \leq c \\ 1\leq j \leq e^{(i)}}}
C^{(i)}_{j} -(r-1)A
\right),
\end{split}
\end{equation}
which is, by (\ref{eq-I}), isomorphic to
\begin{equation}\label{eq-KRIR3}
\begin{split}
\mcO_{\barR}(2I_{\barR})\otimes
\mcO_{\mbG}\left(
\frac{d(d+1)}{2}-n-2
\right)\otimes\mcO_{\Pn}(-1)
\otimes\mcO_{\mcP}(-3)
\otimes
\bigotimes_{i=1}^{c}\mcO_{\mbK^{(i)}}(2m^{(i)}) \\
\otimes
\mcO_{\barS}(-\rank\,\mcU-1)
\otimes
\mcO_{\wtPd}\left(
\Delta+2\sum_{\substack{1\leq i \leq c \\ 1\leq j \leq e^{(i)}}}
C^{(i)}_{j} -(r-1)A
\right).
\end{split}
\end{equation}
Noting (\ref{eq-K-1}), we see that the line bundle (\ref{eq-KRIR3})
is isomorphic to
\begin{align}\label{align-KRIR3}
\mcO_{\barR}(2I_{\barR})\otimes
\mcO_{\mbG}\left(
\frac{d(d+1)}{2}-n-2
\right)\otimes\mcO_{\Pn}(-1)
\otimes\mcO_{\mcP}\left(2\sum_{i=1}^{c}e^{(i)}-3\right) \\
\otimes
\bigotimes_{i=1}^{c}\mcO_{\mbK^{(i)}}(2m^{(i)}-2) 
\otimes
\mcO_{\barS}(-\rank\,\mcU-1)
\otimes
\mcO_{\wtPd}\left(
\Delta-(r-1)A
\right). \notag
\end{align}
By (\ref{eq-DelA}), also noting (\ref{align-etaAA}),
we see that the line bundle (\ref{align-KRIR3}) is isomorphic to
\begin{align}
\mcO_{\barR}(2I_{\barR})\otimes
\mcO_{\mbG}\left(
\frac{d(d+1)}{2}-n+1-2\sum_{i=1}^{c}e^{(i)}
\right)\otimes\mcO_{\Pn}\left(2\sum_{i=1}^{c}e^{(i)}-4\right)\notag  \\
\otimes\mcO_{\wtPd}\left(\Delta+\left(2\sum_{i=1}^{c}e^{(i)}-r-2\right)A\right) \notag \\
\otimes
\bigotimes_{i=1}^{c}\mcO_{\mbK^{(i)}}(2m^{(i)}-2) 
\otimes
\mcO_{\barS}(-\rank\,\mcU-1).
\notag
\end{align}
This is isomorphic to (\ref{eq-33A}).
\endproof

Denote by $\gamma$ the composition
$\overline{R}\to \overline{R}_{\overline{\mbO}}
\subset \overline{R}_{\overline{\mbK}}
\xrightarrow{\eta}\mbL\times\overline{\mbfS}
\xrightarrow{\xi}\Pn\times \overline{\mbfS}$
of morphisms in (\ref{eq-bigdiag}).

\begin{cor}\label{cor-ggen}
Let $Y$ be a smooth projective curve and 
let $b:Y\to \barR$ be a morphism.
Suppose that the image $(\gamma\circ b)(Y)\subset \Pn\times \barS$ of $Y$ by the morphism $\gamma\circ b$
is not contained in the divisor $\mbfD$,
and that the image $(\psi\circ \bar{\eta}_{3} \circ b)(Y)\subset \overline{\mbO}$ of $Y$ by the morphism $\psi\circ \bar{\eta}_{3} \circ b$
intersects the open subset $\mbO$ of $\overline{\mbO}$.
Then $\gamma^{-1}\mbfD(=(\gamma^{*}\mbfD)_{\mathrm{red}})$
is equal to the SNC divisor $I_{\overline{R}}+\left(
\sum_{i=1}^{c}J_{\overline{R}}^{(i)}
\right)_{\mathrm{red}}$
on $\barR$, and
the inequality
\begin{align}\label{align-bKineq}
\deg 
b^{*}K_{\overline{R}}\left(
\gamma^{-1}\mbfD
\right)
\geq
&
\deg \left. \mcO_{\mbG}\left(
\frac{(d-1)(d-2)}{2}-n
\right) \right|_{Y}
+\deg\left.
\mcO_{\Pn}\left(
2\sum_{i=1}^{c}e^{(i)}-4
\right) \right|_{Y} \\
&+\deg\left. \mcO_{\barS}(-\rank\, \mcU-1)\right|_{Y}\notag
\end{align}
holds,
where $?|_{Y}$ denotes the pull back of the line bundle $?$ to $Y$.
\end{cor}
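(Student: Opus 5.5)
The first assertion is a set-theoretic identity, which I would pull back from $\overline{R}_{\mbK}$. By (\ref{eq-pullD}), $(\xi\circ\eta)^{*}\mbfD=I+\sum_{i}m^{(i)}J^{(i)}$ on $\overline{R}_{\mbK}$. The map $\gamma$ factors through $\overline{R}_{\mbK}$, so the support of $\gamma^{*}\mbfD$ equals that of $I_{\barR}+\sum_{i}J^{(i)}_{\barR}$. Taking reduced structures gives $\gamma^{-1}\mbfD=I_{\barR}+\bigl(\sum_{i}J^{(i)}_{\barR}\bigr)_{\mathrm{red}}$. Here $I_{\barR}$ is reduced, because it is a projective subbundle over $\wtPd/\mathfrak{S}_{1}$, and it shares no component with the $J$'s. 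Proposition \ref{prop-IJi}(2) then says this divisor is SNC.

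For the inequality, $\rho:\barR_{\wtPd}\to\barR$ is a finite cover of degree $r$, so I would lift the curve. Take a component $\widetilde{Y}$ of the normalization of $Y\times_{\barR}\barR_{\wtPd}$, with $\tilde{b}:\widetilde{Y}\to\barR_{\wtPd}$ and $\widetilde{Y}\to Y$ of degree $r'$. Since degrees of pulled-back line bundles scale by $r'$, it suffices to prove the inequality for $\tilde{b}^{*}\rho^{*}K_{\barR}(\gamma^{-1}\mbfD)$ with all bundles pulled back to $\widetilde{Y}$. By Proposition \ref{prop-KRIJ}, this bundle equals (\ref{eq-33A}), possibly tensored with $\mcO(2C_{0})$. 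So I need to show that each factor beyond the three in (\ref{align-bKineq}) has non-negative degree on $\widetilde{Y}$. These extra factors are:
\begin{itemize}
\item[(a)] $\mcO(2I_{\barR})$;
\item[(b)] $\mcO_{\wtPd}(\Delta)$, $\mcO_{\wtPd}(A)^{\otimes(2\sum e^{(i)}-r-2)}$ and $\mcO(2C_{0})$;
\item[(c)] $\bigl(\mcO_{\mbK^{(i)}}(1)\otimes\mcO_{\mbG}(e^{(i)})\bigr)^{\otimes(2m^{(i)}-2)}$.
\end{itemize}

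For (a) and (b), these are effective divisors. The hypotheses ensure the curve is not contained in their supports. The image of $Y$ is not inside $\mbfD$, which contains $\gamma(I)$, so $\widetilde{Y}\not\subset I$. The image meets $\mbO$, whose preimage avoids $\Delta$, $A$, $C_{0}$ and the $C_{j}^{(i)}$, since these lie over the boundary $\mbH_{0}\setminus\mbH_{0}^{\times}$ or are exceptional. Hence these degrees are $\geq 0$. For the $A$ term I also need $2\sum_{i}e^{(i)}-r-2\geq0$. This follows from (\ref{eq-rdineq}): $r\leq\sum_{i}e^{(i)}$ and $\sum_{i}e^{(i)}\geq\sharp\Supp E_{0}\geq2$.

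For (c), $m^{(i)}\geq1$, so it suffices that $\mcO_{\mbK^{(i)}}(1)\otimes\mcO_{\mbG}(e^{(i)})$ is nef, or at least has non-negative degree on curves. $\mbK^{(i)}=\mbP_{*}(S^{e^{(i)}}\pi_{1}^{*}\mcQ)$, so $\mcO_{\mbK^{(i)}}(-1)\subset S^{e^{(i)}}\mcQ$. Dualizing, $\mcO_{\mbK^{(i)}}(1)$ is a quotient of $S^{e^{(i)}}\mcQ^{\vee}$. Since $\mcQ$ has rank 2, $\mcQ^{\vee}\simeq\mcQ\otimes(\det\mcQ)^{-1}$, so $S^{e}\mcQ^{\vee}\otimes\mcO_{\mbG}(e)\simeq S^{e}\mcQ$, which is globally generated. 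Hence $\mcO_{\mbK^{(i)}}(1)\otimes\mcO_{\mbG}(e^{(i)})$ is globally generated and has non-negative degree on $\widetilde{Y}$.

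The main obstacle is making this bookkeeping precise. One must check that the "omitted pull-back" conventions in Proposition \ref{prop-KRIJ} are compatible, that the lifted curve genuinely avoids every boundary divisor (it suffices that its image meets $\psi^{-1}(\mbO)$), and that the multiplicity argument behind (\ref{eq-pullD}) is unaffected by the reduction.
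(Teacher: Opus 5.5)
Your proposal is correct and follows essentially the same route as the paper. You read off $\gamma^{-1}\mbfD$ from (\ref{eq-pullD}) and Proposition \ref{prop-IJi}, and pass to a finite cover so that $b$ factors through $\rho$. You then check that every factor of (\ref{eq-33A}) other than the three in (\ref{align-bKineq}) has non-negative degree, using that the curve avoids $I$ and the boundary over $\overline{\mbO}\setminus\mbO$, that $2\sum_{i}e^{(i)}-r-2\geq 0$, and that $\mcO_{\mbK^{(i)}}(1)\otimes\mcO_{\mbG}(e^{(i)})$ is globally generated. The differences are minor: you prove that last global generation directly via $\mcQ^{\vee}\simeq\mcQ\otimes(\det\mcQ)^{-1}$ rather than citing \cite[Proposition 2.2 (2)]{A24}, and you treat the extra $\mcO(2C_{0})$ factor explicitly where the paper leaves it implicit.
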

\proof
We have $\gamma^{-1}\mbfD=I_{\overline{R}}+\left(
\sum_{i=1}^{c}J_{\overline{R}}^{(i)}
\right)_{\mathrm{red}}$ by (\ref{eq-pullD}), and it is a SNC divisor of $\barR$ by Proposition \ref{prop-IJi}.

Next we prove the inequality.
Replacing $Y$ by a finite cover of $Y$ if necessary,
we may assume that the morphism $b$ factors as $Y\to\barR_{\wtPd}\xrightarrow{\rho}\barR$.
We consider the degrees of line bundles appearing in (\ref{eq-33A}) pulled back to $Y$.
We have $\deg \left. \mcO_{\barR}(2I_{\barR})\right|_{Y}\geq 0$ because $(\gamma\circ g)(T) \not \subset \mbfD$.
By (\ref{eq-assumption}) and (\ref{eq-rdineq}),
we have $2\sum_{i=1}^{c}e^{(i)}-r-2\geq 0$.
So the pull-back to $Y$ of the line bundle
$\mcO_{\wtPd}\left(\Delta+
\left(
2\sum_{i=1}^{c}e^{(i)}-r-2
\right)A
\right)$ have non-negative degree (here we use the assumption $(\psi\circ \bar{\eta}_{3} \circ b)(Y)\not\subset \overline{\mbO}\setminus \mbO$).
By \cite[Proposition 2.2 (2)]{A24},
the line bundle $\mcO_{\mbKi}(1)\otimes (\pi_{1}\circ\tau_{2}^{(i)})^{*}\mcO_{\mbG}(e^{(i)})$ on $\mbKi$
is globally generated.
So we have $\deg \left. \mcO_{\mbK^{(i)}}(1)
\otimes\mcO_{\mbG}( e^{(i)} ) \right|_{Y} \geq 0$.
The inequality (\ref{align-bKineq}) follows from these inequalities and Proposition \ref{prop-KRIJ}.
\endproof

\section{Proof of Theorem \ref{thm-core}} \label{section-proofcore}

Suppose that the statement (ii) of Proposition \ref{prop-half} holds.
We can find a pointed line $(l_{0}, p_{0})$ and a degree $d$ effective divisor
 $E_{0}$ on $l_{0}$ with $p_{0} \notin \Supp \, E_{0}$
such that for a general point $y\in \mcY$, the divisor
$D_{\sigma'}|_{l_{y}}$ on the pointed line $(l_{y}, x)$ is equivalent to the divisor $E_{0}$
on $(l_{0}, p_{0})$,
where see Proposition \ref{prop-half} (ii) for the notation $D_{\sigma'}|_{l_{y}}$.

If $\sharp \Supp\, E_{0} \leq 2$, then
(ii) of Theorem \ref{thm-core} holds and we are done.
Now we suppose that $\sharp \Supp\, E_{0} \geq 3$,
and will derive the inequality (\ref{eq-ineqcore}).
For this divisor $E_{0}$ on $(l_{0}, p_{0})$,
we will use notation and results about moduli spaces of divisors
on pointed lines obtained in \S \ref{sect-lines}.

\subsection{Normal bundle $\mcN_{\tilde{f}}$}
In this subsection, we introduce a big diagram describing our situation,
and define a normal bundle, the lower bound of the degree of which
is a key to the inequality (\ref{eq-ineqcore}).

Recall the following part of the diagram (\ref{eq-bigdiag}):

\begin{equation}\label{eq-frombigdiag}
\xymatrix@R=15pt@C=40pt{
&\overline{R}_{ \widetilde{\mcP}^{\dagger} }  \ar[d]^-{\rho} 
&
 \\
&\overline{R}\ar[r]^-{\bar{\eta}_{3}} \ar[ddd]^-{\gamma}
& {\widetilde{\mcP}^{\dagger} /\mathfrak{S}_{1}  } \ar[d]^-{\psi}
 \\
&
& \overline{\mbO} \\
&
& \\
\mathbf{D} \ar@{}[r]|*{\subset}&\Pn\times \overline{\mbfS} \ar[r] \ar[d]^-{pr_{\overline{\mbfS}}} & \mbP^{n} &  \\
&\overline{\mbfS} & .
}
\end{equation}
Here recall that  $\gamma$ is the composition
$\overline{R}\to \overline{R}_{\overline{\mbO}}
\subset \overline{R}_{\overline{\mbK}}
\xrightarrow{\eta}\mbL\times\overline{\mbfS}
\xrightarrow{\xi}\Pn\times \overline{\mbfS}$
of morphisms in (\ref{eq-bigdiag}).
Recall also the morphism $\alpha:C\times\Sigma^{sm}\to C\times \overline{\mbfS}$
in (\ref{eq-alpha}).
We denote by $\alpha_{1}$ the composition
\begin{equation}\label{eq-alpha1}
C\times\Sigma^{sm}\xrightarrow{\alpha} C\times \overline{\mbfS}
\xrightarrow{pr_{\overline{\mbfS}}}\overline{\mbfS}.
\end{equation}
Taking the base change of the diagram (\ref{eq-frombigdiag})
by the morphisms
\[
C\times \Sigma' \xrightarrow{\id_{C}\times\beta}
C\times \Sigma^{sm}
\xrightarrow{\alpha_{1}}
\overline{\mbfS},
\]
we obtain the following diagram (except the morphism $\tilde{f}$).
\begin{equation}\label{eq-hugediag}
\xymatrix@R=20pt@C=30pt{
&\left( \overline{R}_{ \widetilde{\mcP}^{\dagger} } \right)_{C\times \Sigma'} \ar[r] \ar[d]
&\left( \overline{R}_{ \widetilde{\mcP}^{\dagger} } \right)_{C\times \Sigma^{sm}} \ar[r] \ar[d]
&\overline{R}_{ \widetilde{\mcP}^{\dagger} }  \ar[d]^-{\rho} 
&
\\
&\overline{R}_{C\times \Sigma'} \ar[r]^-{b_{2}} \ar[dd]^-{\gamma'}
&\overline{R}_{C\times \Sigma^{sm}} \ar[r]^-{b_{1}} \ar[dd]
&\overline{R}\ar[r]^-{\bar{\eta}_{3}} \ar[dd]^-{\gamma}
& {\widetilde{\mcP}^{\dagger} /\mathfrak{S}_{1}  } \ar[d]^-{\psi}
\\
&
&
&
& \overline{\mbO} 
\\
\mcY \ar@/^10pt/[ruu]^-{\tilde{f}} \ar[r]^-{f} \ar@/_10pt/[rd]^-{\tilde{p}_{1}} \ar@/_15pt/[rdd]_-{p_{1}}
&\Pn\times C\times\Sigma' \ar[r] \ar[d]
&\Pn\times C\times\Sigma^{sm} \ar[r] \ar[d]
&\Pn\times \overline{\mbfS} \ar[r] \ar[d]^-{pr_{\overline{\mbfS}}} & \mbP^{n} 
&  
\\
&C\times \Sigma' \ar[r]^-{\id_{C}\times \beta} \ar[d]
&C\times \Sigma^{sm} \ar[r]^-{\alpha_{1}} \ar[d]
&\overline{\mbfS} 
&  
\\
&\Sigma' \ar[r]^-{\beta}
&\Sigma^{sm}
&
&
}
\end{equation}
Here 
$\left( \overline{R}_{ \widetilde{\mcP}^{\dagger} } \right)_{?}$
and $\overline{R}_{?}$ denote
$\left( \overline{R}_{ \widetilde{\mcP}^{\dagger} } \right)\times_{\overline{\mbfS}} {?}$
and
$\overline{R}\times_{\overline{\mbfS}}{?}$
respectively for $?=C\times \Sigma', \,C\times\Sigma^{sm}$,
the morphisms $b_{1}$ and $b_{2}$
are base-changes of the morphisms $\alpha_{1}$ and $\id_{C}\times \beta$ respectively,
$\gamma'$ is the base-change of $\gamma$,
and $\tilde{p}_{1}$ is the composite of the morphism $f$ and the 
projection $\Pn\times C \times \Sigma' \to C \times \Sigma'$.

Recall that we defined a  divisor $I_{\overline{R}}\subset \overline{R}$ in \S \ref{subsect-calcano};
we denote by $I_{C\times \Sigma'}$ its pull-back to $\overline{R}_{C\times \Sigma'}$.
By construction, the open subscheme 
$(\psi\circ\bar{\eta}_{3}\circ b_{1}\circ b_{2})^{-1}(\mbO)
\setminus I_{C\times \Sigma'} \subset \overline{R}_{C\times \Sigma'}$
parametrized quadruples $(c, \sigma', l, z)$,
where $(c,\sigma')\in C\times \Sigma'$ and $l\subset \Pn$ is a line,
and $z\in l$ is a point
such that $l\nsubset D'_{\sigma'}|_{\Pn\times \{c\}}$ and the divisor $D'_{\sigma'}|_{l\times\{c\}}$ on the pointed line
$(l,z)(\simeq (l\times\{c\}, (z,c)))$ is equivalent to the divisor $E_{0}$ on $(l_{0}, p_{0})$.
Since we are in the situation where (ii) of Proposition \ref{prop-half} holds,
we obtain a rational map $\tilde{f}:\mcY\to \overline{R}_{C\times \Sigma'}$
such that for a general point $y\in \mcY$, 
$\tilde{f}(y)$ is a point of $(\psi\circ\bar{\eta}_{3}\circ b_{1}\circ b_{2})^{-1}(\mbO)
\setminus I_{C\times \Sigma'} \subset \overline{R}_{C\times \Sigma'}$
representing the quadruple $(c, \sigma', l_{y}, z)$,
where $c, \sigma', z$ are defined by $(z,c,\sigma')=f(y)$,
and $l_{y}$ is the line given in the statement (ii) of Proposition \ref{prop-half}.
By shrinking $\Sigma'$ if necessary, we may assume that $\tilde{f}$ is a morphism.
\begin{lem}
The divisor $\gamma'^{*}\mcS_{\Sigma'}$ is a smooth divisor on $\overline{R}_{C\times \Sigma'}$,
and the divisor
$\left(
\gamma'^{*}\mcD'
\right)_{\mathrm{red}}$
is equal to the SNC divisor
$(b_{1}\circ b_{2})^{*}\left(
I_{\overline{R}}+\left(
\sum_{i=1}^{c}J_{\overline{R}}^{(i)}
\right)_{\mathrm{red}}
\right).$
Moreover
the divisor 
$\left(
\gamma'^{*}\mcD'
\right)_{\mathrm{red}}
+\gamma'^{*}\mcS_{\Sigma'}$
on $\overline{R}_{C\times \Sigma'}$
is SNC.
Here see \S \ref{section-setting} for the definition of $\mcD'$
and $\mcS_{\Sigma'}$.
\end{lem}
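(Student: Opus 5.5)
The plan is to reduce everything to the smoothness of the morphism $b_{1}\circ b_{2}:\overline{R}_{C\times\Sigma'}\to\overline{R}$ together with a description of $\gamma'$ as a product-type map in the $C$-direction. First, $\alpha:C\times\Sigma^{sm}\to C\times\barS$ is smooth (stated after (\ref{eq-alpha})). The morphism $\alpha_{1}=pr_{\barS}\circ\alpha$ is therefore smooth as well, provided $pr_{\barS}$ is smooth on the image; it is, since $C\times\barS\to\barS$ is a smooth projection. Since $\beta$ is smooth, $\alpha_{1}\circ(\id_{C}\times\beta)$ is smooth too. Base change then makes $b_{1}\circ b_{2}$ smooth. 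More precisely, the morphism
\[
\overline{R}_{C\times\Sigma'}\longrightarrow C\times\overline{R},
\]
given by projecting to $C$ (through $C\times\Sigma'$) and to $\overline{R}$, is a base change of the smooth morphism $C\times\Sigma'\to C\times\barS$. It is therefore smooth, and I will use it instead of $b_{1}\circ b_{2}$ alone.

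Second, I identify the divisors. By definition $\mcD'$ is the pull-back of $\mbfD$ under $X\times\Sigma'\to\Pn\times C\times\barS\to\Pn\times\barS$ (see the discussion before (\ref{eq-PPSd})). The square with $\gamma'$ and $\gamma$ in (\ref{eq-hugediag}) commutes. Hence $\gamma'^{*}\mcD'=(b_{1}\circ b_{2})^{*}\gamma^{*}\mbfD$. Pull-back by a smooth morphism commutes with taking reduced structure. Using (\ref{eq-pullD}) (pulled back to $\overline{R}$), this gives
\[
(\gamma'^{*}\mcD')_{\mathrm{red}}=(b_{1}\circ b_{2})^{*}\bigl(I_{\overline{R}}+(\textstyle\sum_{i}J^{(i)}_{\overline{R}})_{\mathrm{red}}\bigr).
\]
This divisor is SNC because it is the smooth pull-back of an SNC divisor, by Proposition \ref{prop-IJi}(2). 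Next, $\mcS_{\Sigma'}=\Pn\times S\times\Sigma'$, so $\gamma'^{*}\mcS_{\Sigma'}$ is the pull-back of $S\times\overline{R}$ under the smooth map $\overline{R}_{C\times\Sigma'}\to C\times\overline{R}$. Since $S\times\overline{R}$ is a smooth divisor, $\gamma'^{*}\mcS_{\Sigma'}$ is a reduced smooth divisor.

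Finally, I treat the sum. On $C\times\overline{R}$, the divisor $S\times\overline{R}+C\times\bigl(I_{\overline{R}}+(\sum_{i}J^{(i)}_{\overline{R}})_{\mathrm{red}}\bigr)$ is SNC. The reason is that it is a sum of pull-backs of SNC divisors from the two factors of a product, and in local coordinates $(t,w_{1},\dots)$ it is given by $t\cdot\prod w_{j}=0$. Pulling this back by the smooth morphism $\overline{R}_{C\times\Sigma'}\to C\times\overline{R}$ gives exactly $(\gamma'^{*}\mcD')_{\mathrm{red}}+\gamma'^{*}\mcS_{\Sigma'}$. SNC is preserved under smooth pull-back, so the sum is SNC.

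The main obstacle is the smoothness of $\overline{R}_{C\times\Sigma'}\to C\times\overline{R}$. It requires checking that the combined map $C\times\Sigma^{sm}\to C\times\barS$, $(c,\sigma)\mapsto(c,D_{\sigma}|_{\Pn\times\{c\}})$, is smooth, and not merely its second component. Fortunately this is exactly the morphism $\alpha$, which is smooth: it is the restriction of a linear projection of projective bundles over $C$, and it is surjective on fibers because $W$ generates $\mcL$.
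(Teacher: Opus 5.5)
Your argument is correct. For the identification of $(\gamma'^{*}\mcD')_{\mathrm{red}}$ and its SNC property you do what the paper does: smoothness of $b_{1}\circ b_{2}$, combined with Corollary \ref{cor-ggen} (equivalently (\ref{eq-pullD})) and Proposition \ref{prop-IJi}.

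For the part involving $\mcS_{\Sigma'}$ you take a somewhat different route. The paper works point by point over $s\in S$. It uses that $\{s\}\times\Sigma'\to\barS$ is smooth to conclude that $\barR_{\{s\}\times\Sigma'}$ is smooth and that $(\gamma'^{*}\mcD')_{\mathrm{red}}$ restricts to an SNC divisor on it. It then implicitly applies the transversality criterion: if $D$ is SNC, $H$ is smooth and $D|_{H}$ is SNC, then $D+H$ is SNC.

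You instead note that $\overline{R}_{C\times\Sigma'}\to C\times\barR$ is the base change of the smooth morphism $\alpha\circ(\id_{C}\times\beta)$ along $C\times\barR\to C\times\barS$. All three assertions then follow at once by pulling back the product-type SNC divisor $S\times\barR+C\times\bigl(I_{\barR}+(\sum_{i}J^{(i)}_{\barR})_{\mathrm{red}}\bigr)$.

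Your route uses a slightly stronger input: smoothness of the full map $\alpha$, not just of its restrictions over the points of $S$. The paper asserts this anyway. In return you avoid the separate transversality check, and the smoothness of $\gamma'^{*}\mcS_{\Sigma'}$ and the SNC property of the sum become immediate. The paper's route needs only the smoothness of $b_{1}\circ b_{2}$ together with fibrewise smoothness over $S$.
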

\proof
The divisor $\mcD'$ is equal to the pull-back of the divisor
$\mbfD\subset \Pn\times\overline{\mbfS}$
to $\Pn\times C \times \Sigma'$.
So we have $\gamma'^{*}\mcD'=(\gamma\circ b_{1} \circ b_{2})^{*}(\mbfD)$.
By Corollary \ref{cor-ggen},
$(\gamma^{*}\mbfD)_{\mathrm{red}}$ is 
equal to the SNC divisor $I_{\overline{R}}+\left(
\sum_{i=1}^{c}J_{\overline{R}}^{(i)}
\right)_{\mathrm{red}}$
on $\barR$.
Since  $b_{1}\circ b_{2}$ is a smooth morphism,
$\left(
\gamma'^{*}\mcD'
\right)_{\mathrm{red}}$
is also SNC.

The divisor $\gamma'^{*}\mcS_{\Sigma'}$ is the pull-back of the divisor
$S\times \Sigma'\subset C\times \Sigma'$ to $\overline{R}_{C\times \Sigma'}$.
For each point $s\in S$, the morphism $\{s\}\times \Sigma' \to \overline{\mbfS}$
is smooth,
so 
$\barR_{\{s\}\times\Sigma'}:=\text{
(the pull-back of the divisor $\{s\}\times \Sigma' \subset C\times \Sigma'$ to $\overline{R}_{C\times \Sigma'}$)}$
is smooth.
Moreover the smoothness of the morphism $\{s\}\times \Sigma'\to \overline{\mbfS}$ implies that
the intersection of  $\barR_{\{s\}\times\Sigma'}$
and $\left(
\gamma'^{*}\mcD'
\right)_{\mathrm{red}}$
is a SNC divisor on $\barR_{\{s\}\times\Sigma'}$,
which shows that the divisor $\left(
\gamma'^{*}\mcD'
\right)_{\mathrm{red}}
+\gamma'^{*}\mcS_{\Sigma'}$
is SNC.
\endproof
By shrinking $\Sigma'$ if necessary, we may assume that the divisor
$\left(
\gamma'^{*}\mcD'
\right)_{\mathrm{red}}
+\gamma'^{*}\mcS_{\Sigma'}$
is relatively SNC over $\Sigma'$.
We define the sheaf $\mcN_{\tilde{f}}$ on $\mcY$ by the exact sequence
\begin{equation}\label{eq-Nmoto}
0
\to 
T_{\mcY}(-\log(\mcD_{\mcY}\cup \mcS_{\mcY}))
\to
\tilde{f}^{*}
T_{\overline{R}_{C\times \Sigma'}}
(-\log((\gamma'^{*}\mcD')_{\mathrm{red}}+\gamma'^{*}\mcS_{\Sigma'}))
\to
\mcN_{\tilde{f}} \to 0.
\end{equation}
By Proposition \ref{prop-reltan} (3),
we have an exact sequence
\begin{equation}\label{eq-Nftil}
0
\to 
T_{\mcY/\Sigma'}(-\log(\mcD_{\mcY}\cup \mcS_{\mcY}))
\to
\tilde{f}^{*}
T_{\overline{R}_{C\times \Sigma'}/\Sigma'}
(-\log((\gamma'^{*}\mcD')_{\mathrm{red}}+\gamma'^{*}\mcS_{\Sigma'}))
\to
\mcN_{\tilde{f}} \to 0.
\end{equation}
For a general point $\sigma'\in \Sigma'$,
restricting the above short exact sequence to the fiber $Y_{\sigma'}$,
we obtain an exact sequence
\begin{equation}
0
\to 
T_{Y_{\sigma'}}(-\log(D_{Y_{\sigma'}}\cup \mcS_{Y_{\sigma'}}))
\to
\left.
\tilde{f}^{*}
T_{\overline{R}_{C\times \Sigma'}/\Sigma'}
(-\log((\gamma'^{*}\mcD')_{\mathrm{red}}+\gamma'^{*}\mcS_{\Sigma'}))
\right|_{Y_{\sigma'}}
\to
\left.
\mcN_{\tilde{f}} 
\right|_{Y_{\sigma'}}
\to 0.
\end{equation}
From this, we obtain an isomorphism
\begin{align}\label{align-KfN}
K_{Y_{\sigma'}}\left(
D_{Y_{\sigma'}}\cup \mcS_{Y_{\sigma'}}
\right)
& \simeq
\left.
\tilde{f}^{*}
K_{\overline{R}_{C\times \Sigma'}/\Sigma'}
((\gamma'^{*}\mcD')_{\mathrm{red}}+\gamma'^{*}\mcS_{\Sigma'})
\right|_{Y_{\sigma'}}
\otimes
\det \left.
\mcN_{\tilde{f}} 
\right|_{Y_{\sigma'}} \\
& \simeq
\left.
\tilde{f}^{*}
K_{\overline{R}_{C\times \Sigma'}}
((\gamma'^{*}\mcD')_{\mathrm{red}}+\gamma'^{*}\mcS_{\Sigma'})
\right|_{Y_{\sigma'}}
\otimes
\det \left.
\mcN_{\tilde{f}} 
\right|_{Y_{\sigma'}}, \notag
\end{align}
where the second isomorphism follows because
the determinant line bundle of 
$T_{\overline{R}_{C\times \Sigma'}}
(-\log((\gamma'^{*}\mcD')_{\mathrm{red}}+\gamma'^{*}\mcS_{\Sigma'}))$
is isomorphic to that of 
$T_{\overline{R}_{C\times \Sigma'}/\Sigma'}
(-\log((\gamma'^{*}\mcD')_{\mathrm{red}}+\gamma'^{*}\mcS_{\Sigma'}))$
tensored by the pull-back of a line bundle on $\Sigma'$.
%
\subsection{Lower bound of $\deg \left.\mcN_{\tilde{f}}\right|_{Y_{\sigma'}} $}
In this subsection, we give a lower bound of  $\deg \left.\mcN_{\tilde{f}}\right|_{Y_{\sigma'}}$.

Note that the morphism
\[
\bar{\eta}_{3}\circ b_{1}\circ b_{2} : \overline{R}_{C\times \Sigma'}\to 
\widetilde{\mcP}^{\dagger}/\mathfrak{S}_{1}
\]
is smooth,
and that we have
\begin{align*}
(\gamma'^{*}\mcD')_{\mathrm{red}}
&=I_{C\times\Sigma'}
+(b_{1}\circ b_{2})^{*}\left(
\sum_{i=1}^{c}
J_{\overline{R}}^{(i)}
\right)_{\mathrm{red}} \\
&=I_{C\times\Sigma'}
+(\bar{\eta}_{3}\circ b_{1}\circ b_{2})^{*}
\left(
\left(
\psi^{*}
\sum_{i=1}^{c}\tau^{(i)*}_{1}\overline{J}^{(i)}
\right)_{\mathrm{red}}
\right).
\end{align*}
We have an exact sequence
\begin{align}\label{align-d1}
0
\to 
T_{\overline{R}_{C\times\Sigma'}/(\widetilde{\mcP}^{\dagger}/\mathfrak{S}_{1})}
\left(
-\log\left(
I_{C\times\Sigma'}+
\gamma'^{*}\mcS_{\Sigma'}
\right)
\right)
\to
T_{\overline{R}_{C\times\Sigma'}}
\left(
-\log\left(
\left(
\gamma'^{*}\mcD'
\right)_{\mathrm{red}}+
\gamma'^{*}\mcS_{\Sigma'}
\right)
\right)
\\
\xrightarrow{d_{1}}
(\bar{\eta}_{3}\circ b_{1} \circ b_{2})^{*}
T_{ (\widetilde{\mcP}^{\dagger}/\mathfrak{S}_{1}) }
\left(
-\log\left(\left(
\psi^{*}
\sum_{i=1}^{c}
\tau_{1}^{(i)*}\overline{J}^{(i)}
\right)_{\mathrm{red}}
\right)
\right)
\to
0 \notag
\end{align}
of $\mcO_{\overline{R}_{C\times\Sigma'}}$-modules.
The morphism
$\bar{\eta}_{3}\circ b_{1}\circ b_{2}\circ \tilde{f}
:\mcY\to \widetilde{\mcP}^{\dagger}/\mathfrak{S}_{1}$
is not necessarily smooth,
but since $GL(V)$ acts transitively on $\mbO$,
the restriction of $\bar{\eta}_{3}\circ b_{1}\circ b_{2}\circ \tilde{f}$
to the open subset $(\psi \circ \bar{\eta}_{3}\circ b_{1}\circ b_{2}\circ \tilde{f})^{-1}(\mbO)$
is smooth
(see \S \ref{subsect-const} for the varieties $\mbO$ and $\overline{\mbO}$,
and recall that the morphism $\psi^{-1}(\mbO)\to \mbO$ is an isomorphism).
So if we let $d_{2}$ be the composite of morphisms
\begin{align}
T_{\mcY}\left(
-\log\left(
\mcD_{\mcY}\cup\mcS_{\mcY}
\right)
\right)
\to
\tilde{f}^{*}
T_{\overline{R}_{C\times\Sigma'}}
\left(
-\log\left(
(\gamma'^{*}\mcD)_{\mathrm{red}}
+\gamma'^{*}\mcS_{\Sigma'}
\right)
\right)
\\
\xrightarrow{\tilde{f}^{*}(d_{1})}
(\bar{\eta}_{3}\circ b_{1} \circ b_{2}\circ \tilde{f})^{*}
T_{ \widetilde{\mcP}^{\dagger}/\mathfrak{S}_{1} }
\left(
-\log\left(\left(
\psi^{*}
\sum_{i=1}^{c}
\tau_{1}^{(i)*}\overline{J}^{(i)}
\right)_{\mathrm{red}}
\right)
\right),
\notag
\end{align}
then $d_{2}$ is surjective over the open subset $(\psi \circ \bar{\eta}_{3}\circ b_{1}\circ b_{2}\circ \tilde{f})^{-1}(\mbO)$.
Combining (\ref{eq-Nmoto}) and (\ref{align-d1}),
we obtain an exact sequence
\begin{equation}
0
\to
\Ker\,d_{2}
\to
\tilde{f}^{*}
T_{\overline{R}_{C\times\Sigma'}/(\widetilde{\mcP}^{\dagger}/\mathfrak{S}_{1})}
\left(
-\log\left(
I_{C\times\Sigma'}+
\gamma'^{*}\mcS_{\Sigma'}
\right)
\right)
\xrightarrow{d_{3}}
\mcN_{\tilde{f}},
\end{equation}
and the cokernel of $d_{3}$ has 
support in $\mcY\setminus
(\psi \circ \bar{\eta}_{3}\circ b_{1}\circ b_{2}\circ \tilde{f})^{-1}(\mbO)$.
Put $\mcN_{\tilde{f}}^{\flat}:=\mathrm{Im}\,d_{3}$.
We obtain a short exact sequence
\begin{equation}\label{eq-Kd2N}
0
\to
\Ker\,d_{2}
\to
\tilde{f}^{*}
T_{\overline{R}_{C\times\Sigma'}/(\widetilde{\mcP}^{\dagger}/\mathfrak{S}_{1})}
\left(
-\log\left(
I_{C\times\Sigma'}+
\gamma'^{*}\mcS_{\Sigma'}
\right)
\right)
\xrightarrow{d_{3}}
\mcN_{\tilde{f}}^{\flat}
\to
0,
\end{equation}
and an inequality
\begin{equation}\label{eq-NflatN}
\deg \left.\mcN_{\tilde{f}}^{\flat}\right|_{Y_{\sigma'}}
\leq
\deg \left.\mcN_{\tilde{f}}\right|_{Y_{\sigma'}}
\end{equation}
for general $\sigma'\in \Sigma'$.
We aim at getting a lower bound of $\deg \left.\mcN_{\tilde{f}}^{\flat}\right|_{Y_{\sigma'}}$.

From the short exact sequence (cf. Proposition \ref{prop-reltan} (2))
\begin{align}
0
\to
\tilde{f}^{*}
T_{\overline{R}_{C\times\Sigma'}/\overline{R}}
\left(
-\log
\gamma'^{*}\mcS_{\Sigma'}
\right)
\to
\tilde{f}^{*}
T_{\overline{R}_{C\times\Sigma'}/\left( \widetilde{\mcP}^{\dagger}/\mathfrak{S}_{1} \right) }
\left(
-\log\left(I_{C\times\Sigma'}+
\gamma'^{*}\mcS_{\Sigma'}\right)
\right) 
\\
\to
(b_{1}\circ b_{2} \circ \tilde{f})^{*}
T_{\overline{R}/\left( \widetilde{\mcP}^{\dagger}/\mathfrak{S}_{1} \right) }
(-\log I_{\overline{R}}) 
\to 
0
\notag
\end{align}
and the surjective morphism $d_{3}$ in (\ref{eq-Kd2N}),
we obtain an induced exact sequence
\begin{equation}
0\to
\tilde{\mcN}_{2}
\to
\mcN_{\tilde{f}}^{\flat}
\to
\tilde{\mcN}_{1}
\to 
0,
\end{equation}
together with surjective morphisms
\begin{equation}\label{eq-surjN2}
\tilde{f}^{*}
T_{\overline{R}_{C\times\Sigma'}/\overline{R}}
\left(
-\log
\gamma'^{*}\mcS_{\Sigma'}
\right)
\twoheadrightarrow\tilde{\mcN}_{2}
\end{equation}
and
\begin{equation}\label{eq-surjN1}
(b_{1}\circ b_{2} \circ \tilde{f})^{*}
T_{\overline{R}/\left( \widetilde{\mcP}^{\dagger}/\mathfrak{S}_{1} \right) }
(-\log I_{\overline{R}}) 
\twoheadrightarrow\tilde{\mcN}_{1}
\end{equation}
(see \S \ref{subsection-term} for what we mean by an induced exact sequence).

The relative log tangent bundle 
$T_{\barR_{C\times\Sigma'}/\barR}
\left(
-\log\gamma'^{*}\mcS_{\Sigma'}
\right)$
on $\barR_{C\times\Sigma'}$
is isomorphic to the pull-back 
to $\barR_{C\times\Sigma'}$
of the relative log tangent bundle
$T_{C\times\Sigma'/\overline{\mbfS}}\left(
-\log(S\times \Sigma')
\right)$
on $C\times\Sigma'$.
On $C\times\Sigma'$,
we have a short exact sequence (cf. Proposition \ref{prop-reltan} (2))
\begin{equation}
0\to T_{C\times\Sigma'/C\times\Sigma^{sm}}
\to
T_{C\times\Sigma'/\overline{\mbfS}}(-\log(S\times\Sigma'))
\to
(\id_{C}\times\beta)^{*}
T_{C\times\Sigma^{sm}/\overline{\mbfS}}
(-\log(S\times\Sigma^{sm}))
\to 0
\end{equation}
of tangent bundles.
From the pull-back to $\mcY$ of this short exact sequence and the surjection (\ref{eq-surjN2}),
we obtain an induced exact sequence
\begin{equation}
0\to \tilmcN_{4}
\to \tilmcN_{2} \to \tilmcN_{3} \to 0
\end{equation}
of $\mcO_{\mcY}$-modules,
together with surjective morphisms
\begin{equation}\label{eq-surjN3}
\left(
(\id_{C}\times\beta)
\circ p_{1}
\right)^{*}
T_{C\times\Sigma^{sm}/\overline{\mbfS}}\left(
-\log(S\times\Sigma^{sm})
\right)
\twoheadrightarrow\tilmcN_{3}
\end{equation}
and
\begin{equation}
\tilde{p}_{1}^{*}
T_{C\times\Sigma'/C\times\Sigma^{sm}}
\twoheadrightarrow\tilmcN_{4}.
\end{equation}
Since $\tilde{p}_{1}^{*}
T_{C\times\Sigma'/C\times\Sigma^{sm}}
\simeq p_{1}^{*}T_{\Sigma'/\Sigma^{sm}}$,
the restriction of $\tilde{p}_{1}^{*}
T_{C\times\Sigma'/C\times\Sigma^{sm}}$
to the fiber $Y_{\sigma'}$ ($\sigma'\in \Sigma'$)
is a trivial bundle.
So we have
\begin{equation}\label{eq-geqN4}
\deg \left.
\tilmcN_{4}
\right|_{Y_{\sigma'}}\geq 0.
\end{equation}
Since $\alpha_{1}$ is the composed morphism (\ref{eq-alpha1}),
we have a short exact sequence
\begin{equation}\label{eq-TCSsm}
0\to T_{C\times\Sigma^{sm}/C\times\overline{\mbfS}}
\to
T_{C\times\Sigma^{sm}/\overline{\mbfS}}
\left(
-\log(S\times\Sigma^{sm})
\right)
\to
\alpha^{*}
T_{C\times\underline{\mbfS}/\underline{\mbfS}}
\left(
-\log(S\times\overline{\mbfS})
\right)
\to 0.
\end{equation}
From the surjection (\ref{eq-surjN3}) and the pull-back to $\mcY$ of the short exact sequence (\ref{eq-TCSsm}),
we obtain an induced short exact sequence
\begin{equation}
0\to \tilmcN_{6}
\to \tilmcN_{3}
\to \tilmcN_{5}
\to 0
\end{equation}
of $\mcO_{\mcY}$-modules,
and surjective morphisms
\begin{equation}\label{eq-surjN6}
\left(
(\id_{C}\times\beta)\circ \tilde{p}_{1}
\right)^{*}
T_{C\times\Sigma^{sm}/C\times\overline{\mbfS}}
\twoheadrightarrow \tilmcN_{6}
\end{equation}
and
\begin{equation}
\left(
(\id_{C}\times\beta)\circ \tilde{p}_{1}
\right)^{*}\alpha^{*}
T_{C\times\underline{\mbfS}/\underline{\mbfS}}
\left(
-\log(S\times\overline{\mbfS})
\right)
\twoheadrightarrow \tilmcN_{5}.
\end{equation}
By (\ref{eq-TCSigsm}), for any $\sigma\in \Sigma^{sm}$, the vector bundle
$\left.T_{C\times\Sigma^{sm}/C\times\overline{\mbfS}}\right|_{C\times\{\sigma\}}$
on $C\times\{\sigma\}(\simeq C)$
is isomorphic to $\mcM_{C}^{(\mcL; W)}\otimes S^{d}V$,
so
$\left.T_{C\times\Sigma^{sm}/C\times\overline{\mbfS}}\right|_{C\times\{\sigma\}}
\otimes \mcL$
is generated by global sections by Proposition \ref{prop-posi}.
Thus, restricting the surjective morphism (\ref{eq-surjN6}) to the fiber $Y_{\sigma'}$ for general $\sigma'\in \Sigma'$,
we obtain
\begin{equation}\label{eq-geqN6}
\deg\left.
\tilmcN_{6}\right|_{Y_{\sigma'}}
\geq
-(\rank \tilmcN_{6})le.
\end{equation} 
Since the vector bundle $T_{ C\times\overline{\mbfS}/\barS }\left(-\log(S\times \overline{\mbfS})\right)$
is isomorphic to the pull-back to $C\times\overline{\mbfS}$
of the vector bundle $T_{C}(-S)$ on $C$, 
\begin{equation}\label{eq-geqN5}
\deg \left. \tilmcN_{5}\right|_{Y_{\sigma'}}
\geq
\begin{cases}
0 & \text{if $\rank\, \tilmcN_{5}=0$} \\
e(2-2g(C)-|S|) & \text{if $\rank\, \tilmcN_{5}=1.$} 
\end{cases}
\end{equation} 
From (\ref{eq-geqN6}) and (\ref{eq-geqN5}),
we have
\begin{equation}\label{eq-geqN3}
\deg \left. \tilmcN_{3}\right|_{Y_{\sigma'}}
\geq
e\cdot\min\left\{
-(\rank\,\tilmcN_{3})l,
-(\rank\,\tilmcN_{3}-1)l+2-2g(C)-|S|
\right\}.
\end{equation}
From (\ref{eq-geqN4}) and (\ref{eq-geqN3}),
we have
\begin{equation}\label{eq-geqN2}
\deg \left. \tilmcN_{2}\right|_{Y_{\sigma'}}
\geq
e\cdot\min\left\{
-(\rank\,\tilmcN_{2})l,
-(\rank\,\tilmcN_{2}-1)l+2-2g(C)-|S|
\right\}.
\end{equation}
By Proposition \ref{prop-TPU},
we have an isomorphism
\begin{equation}
T_{\overline{R}/\left(
\wtPd/\mathfrak{S}_{1}
\right)}
(-\log I_{\overline{R}})
\simeq
\bar{\eta}_{3}^{*}\psi^{*}(\pi_{1}\circ \tau)^{*}\mcM_{\mbG}^{d}
\otimes\gamma^{*}pr_{\overline{\mbfS}}^{*}\mcO_{\overline{\mbfS}}(1)
\end{equation}
of $\mcO_{\overline{R}}$-module.
From this and the surjective morphism (\ref{eq-surjN1}),
we see that
there is a surjective morphism of $\mcO_{\mcY}$-modules
\begin{equation}\label{eq-surjN1-1}
(b_{1}\circ b_{2} \circ \tilde{f})^{*}
\bar{\eta}_{3}^{*}
\psi^{*}(\pi_{1}\circ \tau)^{*}
\mcM_{\mbG}^{d}
\twoheadrightarrow
\tilmcN_{1}
\otimes
\tilde{p}_{1}^{*}
(\id_{C}\times\beta)^{*}
\alpha_{1}^{*}\mcO_{\overline{\mbfS}}(-1).
\end{equation}
Using the isomorphism (cf. (\ref{eq-alOS}))
\begin{equation}\label{eq-alpull}
\alpha_{1}^{*}\mcO_{\barS}(1)\simeq \mcL \boxtimes \mcO_{\Sigma^{sm}}(1)
\end{equation}
of $\mcO_{C\times\Sigma^{sm}}$-modules,
and the positivity of $\mcM_{\mbG}^{d}$ (Proposition \ref{prop-Mposi}),
we have an inequality
\begin{equation}\label{eq-geqN1}
\deg \left.
\tilmcN_{1}
\right|_{Y_{\sigma'}}
\geq \rank\,\tilmcN_{1}
\left(
el-\deg \left. \mcO_{\mbG}(1)\right|_{Y_{\sigma'}}
\right),
\end{equation}
where $\left.\mcO_{\mbG}(1)\right|_{Y_{\sigma'}}$
denotes the pull-back to $Y_{\sigma'}$ of $\mcO_{\mbG}(1)$,
that is, it is a shorthand for
$\left.
(\pi_{1}\circ\tau\circ\psi\circ\bar{\eta}_{3}\circ b_{1}\circ b_{2} \circ \tilf)^{*}
\mcO_{\mbG}(1)
\right|_{Y_{\sigma'}}.$
\begin{notation}\label{notation-Y}
As in the last sentence, we write $\mcE|_{Y_{\sigma'}}$
for the pull-back to $Y_{\sigma'}$ of a sheaf $\mcE$.
\end{notation}
From (\ref{eq-geqN2}) and (\ref{eq-geqN1}),
using the equality
$\rank\,\tilmcN_{1}+\rank\,\tilmcN_{2}=\rank\,\mcN_{\tilf}=2n-d+1$,
we have 
\begin{align}\label{align-N1N2}
\deg\left.
\mcN_{\tilf}^{\flat}
\right|_{Y_{\sigma'}}
& 
\geq 
\rank\,\tilmcN_{1}
\left(
el-\deg \left. \mcO_{\mbG}(1)\right|_{Y_{\sigma'}}
\right)
-
(\rank\,\tilmcN_{2})el
+e\cdot
\min\{
0, l+2-2g(C)-|S|
\}
\\
& 
\geq (2n-d+1)\cdot \min\left\{
el-\deg \left. \mcO_{\mbG}(1)\right|_{Y_{\sigma'}},
-el
\right\}
+e\cdot
\min\{
0, l+2-2g(C)-|S|
\} \notag
\\
&
\geq
(2n-d+1)\left(
-el-\deg \left. \mcO_{\mbG}(1)\right|_{Y_{\sigma'}}
\right)
+e\cdot
\min\{
0, l+2-2g(C)-|S|
\}. \notag
\end{align}
%
%
\subsection{Deriving the inequality (\ref{eq-ineqcore})}
%
%
In this subsection, we obtain a lower bound of
$\deg \left.
K_{\barR_{C\times\Sigma'}}
\left(
\left(
\gamma'^{*}\mcD'
\right)_{\mathrm{red}}
+\gamma'^{*}\mcS_{\Sigma'}
\right)
\right|_{Y_{\sigma'}}$.
Combining it with the lower bound of $\deg \left.\mcN_{\tilde{f}}\right|_{Y_{\sigma'}}$
obtained in the preceding subsection,
we derive the inequality (\ref{eq-ineqcore}).

We have 
\begin{equation}\label{eq-KKeS}
\deg \left.
K_{\barR_{C\times\Sigma'}}
\left(
\left(
\gamma'^{*}\mcD'
\right)_{\mathrm{red}}
+\gamma'^{*}\mcS_{\Sigma'}
\right)
\right|_{Y_{\sigma'}}
=
\deg \left.
K_{\barR_{C\times\Sigma'}}
\left(
\left(
\gamma'^{*}\mcD'
\right)_{\mathrm{red}}
\right)
\right|_{Y_{\sigma'}}
+e|S|.
\end{equation}
Since $(\gamma'^{*}\mcD')_{\mathrm{red}}=(b_{1}\circ b_{2})^{*}(\gamma^{*}\mathbf{D})_{\mathrm{red}}$,
we have an isomorphism of sheaves on $\barR_{C\times\Sigma'}$
\begin{equation}\label{eq-Kbb}
K_{\barR_{C\times\Sigma'}}
\left(
\left(
\gamma'^{*}\mcD'
\right)_{\mathrm{red}}
\right)
\simeq (b_{1}\circ b_{2})^{*}
K_{\barR}\left(
(\gamma^{*}\mbfD)_{\mathrm{red}}
\right)
\otimes
(pr_{C\times\Sigma'}\circ\gamma')^{*}K_{C\times\Sigma'/\barS},
\end{equation}
where $pr_{C\times\Sigma'}:\Pn\times C\times\Sigma'\to C\times\Sigma'$ is the projection.
We have an isomorphism
\begin{equation}\label{eq-KKK}
K_{C\times\Sigma'/\barS}\simeq
K_{C\times\Sigma'/C\times\Sigma^{sm}}
\otimes
(\id_{C}\times\beta)^{*}K_{C\times\Sigma^{sm}/\barS}.
\end{equation}
Noting that the morphism $\alpha_{1}$ factors as in (\ref{eq-alpha1}),
using (\ref{eq-TCSigsm}), we find that
\begin{equation}\label{eq-KLO}
K_{C\times\Sigma^{sm}/\barS}
\simeq
\left(
\mcL^{\otimes\dim S^{d}(V)}
\otimes K_{C}
\right)
\boxtimes
\mcO_{\Sigma^{sm}}(-\dim S^{d}(V)\cdot(\dim W-1)).
\end{equation}
From (\ref{eq-KKK}) and (\ref{eq-KLO}), we have
\begin{equation}\label{eq-degKCSS}
\deg \left.
K_{C\times\Sigma'/\barS}
\right|_{Y_{\sigma'}}
=e\left(
\deg K_{C}+l\cdot\dim S^{d}(V)
\right).
\end{equation}
Note that the isomorphism (\ref{eq-alpull}) implies that
\[
\deg \mcO_{\barS}(1)|_{Y_{\sigma'}} =el.
\]
Now we use Corollary \ref{cor-ggen} to obtain
\begin{align}\label{align-degKR1}
\deg \left.
K_{\barR}
\left(
(\gamma^{*}\mbfD)_{\mathrm{red}}
\right)
\right|_{Y_{\sigma'}}
 \geq &
\deg \left.
\mcO_{\mbG}\left(
\frac{(d-1)(d-2)}{2}-n
\right)\right|_{Y_{\sigma'}} 
\\
&
+\deg
\left. \mcO_{\Pn}\left(
2\sum_{i=1}^{c}e^{(i)}-4
\right)\right|_{Y_{\sigma'}}
-el(\rank\,\mcU+1). \notag
\end{align}
From (\ref{eq-KKeS}), (\ref{eq-Kbb}), (\ref{eq-degKCSS}) and
(\ref{align-degKR1}),
we see that
we have
\begin{align}\label{align-Kgas1}
\deg
\left.
K_{\barR_{C\times\Sigma'}}
\left(
\gamma'^{*}
\left(
\mcD'
\right)_{\mathrm{red}}
+\gamma'^{*}\mcS_{\Sigma'}
\right)\right|_{Y_{\sigma'}}
\geq 
e\left(
|S|+\deg K_{C}+l\left(
\dim S^{d}(V)-\rank\,\mcU-1
\right)
\right) \\
+\left(
\frac{(d-1)(d-2)}{2}-n
\right)\deg \left. \mcO_{\mbG}(1)\right|_{Y_{\sigma'}}
+\left(
2\sum_{i=1}^{c}e^{(i)}-4
\right)\deg \left. \mcO_{\Pn}(1)\right|_{Y_{\sigma'}}. \notag
\end{align}
From (\ref{align-KfN}), (\ref{eq-NflatN}), (\ref{align-N1N2}) and (\ref{align-Kgas1}),
using $\rank\, \mcU=\dim S^{d}(V)-d$,
we have
\begin{align}
& \deg K_{Y_{\sigma'}}\left(
D_{Y_{\sigma'}}\cup\mcS_{Y_{\sigma'}}
\right) \\
& \geq
\left(
\frac{d(d-1)}{2}-3n
\right)\deg \left. \mcO_{\mbG}(1)\right|_{Y_{\sigma'}}
+\left(
2\sum_{i=1}^{c}e^{(i)}-4
\right)\deg \left. \mcO_{\Pn}(1)\right|_{Y_{\sigma'}} \notag \\
&+e\cdot \min\left\{
0,l+2-2g(C)-|S|
\right\}
+e\left(
|S|+\deg K_{C}+l(2d-2n-2)
\right). \notag
\end{align}
Under our assumption $d>\frac{3n+3}{2}$,
the inequality $\frac{d(d-1)}{2}-3n\geq 0$ holds.
By our assumption $\sharp \Supp E_{0} \geq 3$,
we have
\begin{align}
\deg K_{Y_{\sigma'}}\left(
D_{Y_{\sigma'}}\cup\mcS_{Y_{\sigma'}}
\right) 
\geq &
2\deg \left. \mcO_{\Pn}(1)\right|_{Y_{\sigma'}}
+e\cdot \min\left\{
0,l+2-2g(C)-|S|
\right\} \\
&+e\left(
|S|+\deg K_{C}+2l(d-n-1)
\right). \notag 
\end{align}
Subtracting $e\cdot |S|$ from the both sides, and noting
\begin{equation*}
2g(Y_{\sigma'})-2+\left|
D_{Y_{\sigma'}}\setminus S_{Y_{\sigma'}}
\right|
\geq
\deg 
K_{Y_{\sigma'}}\left(
D_{Y_{\sigma'}}\cup \mcS_{Y_{\sigma'}}
\right)-e\cdot |S|,
\end{equation*}
we have
\begin{align*}
2g(Y_{\sigma'})-2+\left|
D_{Y_{\sigma'}}\setminus S_{Y_{\sigma'}}
\right|
\geq 
2\deg \left. \mcO_{\Pn}(1)\right|_{Y_{\sigma'}}
+e(2g(C)-2) 
+2el(d-n-1)
\\
+e\cdot\min\left\{
0, l+2-2g(C)-|S|
\right\}. \notag
\end{align*}
We can rewrite this as
\begin{align}
\frac{2}{e} \deg \left. \mcO_{\Pn}(1)\right|_{Y_{\sigma'}}
\leq
\frac{1}{e}\left(
2g(Y_{\sigma'})-2+\left|
D_{Y_{\sigma'}}\setminus S_{Y_{\sigma'}}
\right|
\right)-(2g(C)-2)
-2l(d-n-1) \\
+\max\left\{
0, 2g(C)-2+|S|-l
\right\}.\notag
\end{align}
The inequality (\ref{eq-ineqcore}) follows from this.
\hfill \qed
\section{Proof of Theorem \ref{thm-main} and Corollary \ref{cor-ofmain}}\label{section-proofofmain}
%
\subsection{Proof of Theorem \ref{thm-main}}
We show that Theorem \ref{thm-main} follows from Theorem \ref{thm-core}
by a standard argument using Hilbert schemes.

Let $\Hilb (X\times \Sigma^{sm}/\Sigma^{sm})$ be the Hilbert scheme
parametrizing integral curves $Y'$ on fibers $p^{-1}(\sigma)=X\times\{\sigma\}\simeq X$,
$\sigma\in \Sigma^{sm}$,
such that $Y'\not\subset D_{\sigma}$ and the composition
$Y'\hookrightarrow X=\Pn\times C\to C$ is a finite morphism.
Let $\pi:\Hilb (X\times \Sigma^{sm}/\Sigma^{sm})\to\Sigma^{sm}$
be the natural morphism.
In this proof, every subscheme of $\Hilb (X\times \Sigma^{sm}/\Sigma^{sm})$
is given its reduced scheme structure.
\begin{claim}\label{claim-hilb}
$\Hilb (X\times \Sigma^{sm}/\Sigma^{sm})$ can be expressed, set-theoretically,
as a countable disjoint union of $GL(V)$-invariant locally closed subset
\[
\Hilb (X\times \Sigma^{sm}/\Sigma^{sm})=\bigsqcup_{k=1}^{\infty}H_{k}
\]
such that for each $k$,
either (a) or (b) holds:
\begin{itemize}
 \item[(a)] $\pi|_{H_{k}}:H_{k}\to \Sigma^{sm}$ is not dominant.
 \item[(b)] $\pi|_{H_{k}}:H_{k}\to \Sigma^{sm}$ is a smooth morphism,
and for any $[Y\subset X\times \{\sigma\}] \in H_{k}$,
either the inequality in Theorem \ref{thm-main} (i) holds,
or $Y\subset \mathbf{E}_{\sigma}$.
\end{itemize}
\end{claim}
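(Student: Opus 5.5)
The plan is to build the stratification by noetherian induction on the irreducible components of the Hilbert scheme, using generic smoothness and Theorem \ref{thm-core} on the dominant pieces. The relative Hilbert scheme $\Hilb(X\times\Sigma^{sm}/\Sigma^{sm})$ is a countable union of quasi-projective schemes, one for each Hilbert polynomial (for a fixed polarization of $X$). The conditions $Y'\not\subset D_{\sigma}$ and finiteness over $C$ are open, so each piece is quasi-projective. The $\GLV$-action on $X\times\Sigma^{sm}$ preserves the fibers of $p$ up to translation, so it induces an action on each piece. Since $\GLV$ is connected, it preserves every irreducible component. It therefore suffices to stratify a single $\GLV$-invariant reduced quasi-projective scheme $H$ into finitely many $\GLV$-invariant locally closed pieces satisfying (a) or (b).

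For this I proceed by noetherian induction. Let $H_{0}$ be an irreducible component of $H$ (it is $\GLV$-invariant). If $\pi|_{H_{0}}$ is not dominant, $H_{0}$ minus the other components is a stratum of type (a). If it is dominant, I take the smooth locus of $H_{0}$, then the open set where $\pi$ is smooth (nonempty by generic smoothness in characteristic zero). On this set I further require the following: the normalization of the universal curve is smooth over the base with connected one-dimensional fibers, the pullbacks of $\mcD$ and $\mcS_{\Sigma^{sm}}$ are \'etale over the base, and their union is SNC and relatively SNC. Each of these conditions holds on a nonempty open subset, because it holds generically after possibly shrinking, and the subset can be chosen $\GLV$-invariant since all the constructions are equivariant. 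Call the resulting open set $H_{0}^{\circ}$, and continue the induction on the closed complement $H\setminus H_{0}^{\circ}$, which is again $\GLV$-invariant.

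It remains to check that the curves parametrized by $H_{0}^{\circ}$ satisfy (b). I apply Theorem \ref{thm-core} with $\Sigma':=H_{0}^{\circ}$ and $\beta:=\pi|_{H_{0}^{\circ}}$, which is $\GLV$-equivariant and smooth. I take $\mcY$ to be the normalization of the universal curve over $H_{0}^{\circ}$ (after shrinking, a smooth family of smooth curves) and $f$ its finite map to $X\times\Sigma'$. The hypotheses of \S\ref{section-setting} are exactly those arranged above, and for $\sigma'=[Y]$ the quantities appearing in (\ref{eq-ineqcore}) are precisely those in Theorem \ref{thm-main} (i).

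The main obstacle is that Theorem \ref{thm-core} only yields its conclusions for general $\sigma'$, or general $y$, whereas (b) demands them for every point of $H_{k}$. My plan is to absorb this into the noetherian induction. In case (i), the inequality holds on a dense open subset of $\Sigma'$; I shrink $H_{0}^{\circ}$ to that open set, which can be taken $\GLV$-invariant by equivariance, and put the remainder into the complement. In case (ii), $f(\mcY)$ lies generically in the locus swept by the lines $l_{y}$. The relative exceptional locus $\mathbf{E}:=\overline{\bigcup_{\sigma}\mathbf{E}_{\sigma}\times\{\sigma\}}$ is closed in $X\times\Sigma^{sm}$, so $f(\mcY)\subset\mathbf{E}$. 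Hence for every $\sigma'$ the curve $Y_{\sigma'}$ lies in the fiber of $\mathbf{E}$. One subtlety here is that the fiber of the closure may exceed $\mathbf{E}_{\sigma'}$; over a dense open subset of $\Sigma^{sm}$ the two agree, and I again shrink to that open set. Either way each stratum satisfies (b) at every point, and the induction terminates on each quasi-projective piece, giving the countable decomposition.
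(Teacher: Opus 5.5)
Your proposal is correct and follows essentially the same route as the paper. You run an induction over the finite-type pieces of the relative Hilbert scheme: non-dominant pieces are type (a), and on dominant pieces you shrink to a $\GLV$-invariant open set where $\pi$ is smooth and the hypotheses of \S\ref{section-setting} hold, apply Theorem \ref{thm-core}, shrink again, and recurse on the complement. Your explicit passage from ``general'' to ``every'' point, including the fiberwise comparison of the closure defining $\mathbf{E}_{\sigma}$, spells out what the paper compresses into ``by shrinking $H_{1}$ if necessary''; it is harmless there, and in case (i) it is even unnecessary, since all quantities in the inequality are constant in the family.
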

\proof[Proof of Claim \ref{claim-hilb}]
First note that $\Hilb (X\times \Sigma^{sm}/\Sigma^{sm})$
is a countable disjoint union of connected schemes of finite type over $\mathrm{Spec}\,\mbC$.
Let $H$ be a connected component of $\Hilb (X\times \Sigma^{sm}/\Sigma^{sm})$,
and we will show that $H$ can be expressed as a finite disjoint union
$H=\bigsqcup_{k}H_{k}$ of locally closed subsets with each $H_{k}$ satisfying (a) or (b) above.

If $\pi|_{H}:H\to\Sigma^{sm}$ is not dominant, then $H$ itself satisfies (a)
and we are done.
Suppose $\pi|_{H}:H\to\Sigma^{sm}$ is dominant.
Then we can find an irreducible $\GLV$-invariant open subset
$H_{1}\subset H$ of maximal dimension such that
$\pi|_{H_{1}}:H_{1}\to\Sigma^{sm}$ is smooth.
Let $\mcY'\subset X\times H_{1}$ be the universal family of curves,
and let $\mcY\to \mcY'$ be the normalization.
By shrinking $H_{1}$ if necessary,
we may assume that $\mcY$ is smooth over $H_{1}$.
Moreover by shrinking $H_{1}$ further if necessary,
we may assume that $\mcY\to H_{1}$
satisfies the conditions imposed on $\mcY\xrightarrow{p_{1}}\Sigma'$ in \S \ref{section-setting}
(such as $\mcD_{\mcY}$ is \'{e}tale over $\Sigma'$, and so on).
The applying Theorem \ref{thm-core}, by shrinking $H_{1}$ if necessary,
$H_{1}$ satisfies (b).
Let $H':=H\setminus H_{1}$,
and repeat the same procedure for $H'$.
Continuing this process, we we finally obtain the desired decomposition $H=\bigsqcup_{k}H_{k}$.
\endproof
Now we show that Claim \ref{claim-hilb} implies Theorem \ref{thm-main}.
Let $I\subset \mbN$ be the set of $k$ such that $\pi|_{H_{k}}:H_{k}\to \Sigma^{sm}$ is not dominant.
Then for every $\sigma \in \Sigma^{sm}\setminus \bigcup_{k\in I}\overline{\pi(H_{k})}$,
the conclusion of Theorem \ref{thm-main} holds.
\hfill \qed
%
\subsection{Proof of Corollary \ref{cor-ofmain}}
(1)
Let $Y\subset \Pn\times C$ be a  non-vertical irreducible curve not contained in $\mathbf{E}_{\sigma}\cup D_{\sigma}$,
let $\nu:\widetilde{Y}\to Y$ be the normalization,
and let $e$ be the degree of the finite morphism $Y\to C$.
By Theorem \ref{thm-main},
the inequality 
\[
N^{(1)}_{S}(D_{\sigma},Y)
+d_{C}(Y)
\geq
\frac{1}{e}
\deg(pr_{\Pn}\circ \nu)^{*}\mcO_{\Pn}(1)-c_{0}
\]
holds.
If the inequality (\ref{eq-corofmain}) does not hold,
then we have
\[
\frac{1}{e}
\deg(pr_{\Pn}\circ \nu)^{*}\mcO_{\Pn}(1)-c_{0}
<\frac{1}{d-n-1}h_{\mcK(D_{\sigma})}(Y)-\epsilon h_{\mcA}(Y)-c,
\]
which is equivalent to the inequality
\[
\deg\nu^{*}(\mcA|_{Y})
<
\frac{e}{\epsilon}
\left\{
\frac{1}{d-n-1}\left(
2g(C)-2+l
\right)+c_{0}-c
\right\}.
\]
It follows that if $e\leq r$, then $\deg\nu^{*}(\mcA|_{Y})$ is bounded above.

\noindent (2) If a section $\tau:C\to \Pn\times C$
satisfies $\tau^{-1}(D_{\sigma})\subset S$ and $\tau(C)\not\subset \mathbf{E}_{\sigma}$,
then by Theorem \ref{thm-main} we have
\[
\deg(pr_{\Pn}\circ \tau)\mcO_{\Pn}(1) \leq c_{0}.
\]
So if $\mcP$ is an ample line bundle on $C$,
the degree of $\tau(C)$ with respect to the ample line bundle $\mcO_{\Pn}(1)\boxtimes \mcP$ is
less than or equal to $c_{0}+\deg \mcP$.
\hfill \qed

\section{Some Remarks}\label{section-Remarks}
%
\subsection{Exceptional sets}\label{subsection-exceptional}
The following proposition gives some  reason why vertical lines intersecting the divisor $D_{\sigma}$
in at most two points
appear in the definition of the exceptional set $\mathbf{E}_{\sigma}$ in Theorem \ref{thm-main}.
\begin{prop}
Let $C$ be a smooth projective curve,
and let $\mcE$ be a rank $2$ vector bundle on $C$.
Consider the ruled surface $\pi:\mbP(\mcE) \to C$,
and let $D$ be a reduced divisor on $\mbP(\mcE)$
intersecting a general fiber $\pi^{-1}(t)$, $t\in C$, in two points.
Then for any $N>0$,
there exists a curve $C'\subset \mbP(\mcE)$ with normalization $\nu: \widetilde{C'}   \to C'$
such that the inequality
\[
\deg \nu^{*}\mcO_{\mbP(\mcE)}(1)>2g(\widetilde{C'} )-2 +\left|
\nu^{-1}(D)
\right|+N
\]
holds.
\end{prop}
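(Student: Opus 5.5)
The construction I have in mind is to pass to the double cover of $C$ defined by $D$, which trivializes the horizontal part of $D$, and then to produce curves of arbitrarily large degree that avoid $D$ entirely outside finitely many fibers.

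First I reduce to the horizontal part. Write $D=D_{h}+D_{v}$, where $D_{v}$ is a finite union of fibers and $D_{h}$ is the horizontal part; by hypothesis $D_{h}$ has degree $2$ over $C$.

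Let $\widetilde{C}\to D_{h}$ be the normalization, so that $\widetilde{C}\to C$ is finite of degree $2$ (possibly disconnected, if $D_h$ has two components). Base change $\mbP(\mcE)$ to $\widetilde{C}$. On the pullback ruled surface $\mbP(\mcE')$, $\mcE'=\mcE|_{\widetilde{C}}$, the pullback of $D_{h}$ contains two sections $\tau_{1},\tau_{2}$. These come from the tautological point and its conjugate, and they are disjoint over a dense open set. If $D_h$ is reducible, then $D_h$ is itself a union of two sections and no base change is needed.

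The key step is to choose new curves. The complement of $\tau_1\cup\tau_2$ over the open set $U\subset\widetilde{C}$ where they are disjoint is a $\mbG_m$-bundle, i.e. $\mbP(\mcE')|_{U}\setminus(\tau_1\cup\tau_2)\simeq$ the total space of $L|_U\setminus 0$ for a line bundle $L$ on $\widetilde C$. Concretely, after elementary transformations at the finitely many points where $\tau_1$ and $\tau_2$ meet, $\mcE'$ becomes $\mcO(\tau_1)\oplus\mcO(\tau_2)$ up to twist. A rational section of $L$ with zeros and poles only over a fixed finite set $Z\subset\widetilde C$ gives a section $s$ of the ruled surface meeting $\tau_1\cup\tau_2$ only over $Z\cup(\widetilde C\setminus U)$.

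Take $s_{k}$ corresponding to $h^{k}\cdot s_{0}$, where $h$ is a fixed nonconstant rational function on $\widetilde{C}$ with zeros and poles in $Z$. Then $\deg s_k^{*}\mcO(1)\to\infty$ linearly in $k$, because the intersection with $\tau_1$ or $\tau_2$ grows like $k\deg h$. Meanwhile $|s_k^{-1}(D)|\le |Z|+|\widetilde C\setminus U|+|\text{points over }D_v|$ stays bounded, and the genus of $s_{k}(\widetilde{C})\cong\widetilde{C}$ is fixed.

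Finally, map back: let $C'$ be the image of $s_k(\widetilde C)$ in $\mbP(\mcE)$. Its normalization is dominated by $\widetilde{C}$, so its genus is at most $g(\widetilde C)$ and $|\nu^{-1}(D)|$ stays bounded. Meanwhile $\deg\nu^*\mcO(1)$ is at least $\tfrac12$ of the degree upstairs, hence unbounded. Choosing $k$ large gives the inequality for any $N$. The need for a nonconstant $h$ is why I allow $Z$ nonempty.

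The main obstacle is verifying that the degree of $\mcO_{\mbP(\mcE)}(1)$ on $s_k$ really grows. It is only $s_k\cdot\tau_i$ plus bounded terms that grows. To see this I would compute in $\mathrm{Pic}$ of the ruled surface, $s_k\equiv\tau_1+(\text{fiber class})\cdot(\deg\text{ of the zero divisor of }h^k s_0)$, and use that $\mcO(1)\cdot F=1$.
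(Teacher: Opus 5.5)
Your proposal is correct and follows essentially the same route as the paper. In the two-section case you use the $\mbG_m$-structure of the complement over an open set to produce sections of unbounded $\mcO(1)$-degree that meet $D$ only over a fixed finite set of fibers; the paper does this with $[s_1^l:s_2^l]$ on $\mbP^1\times C$ and transports it through a resolution $\widetilde{R}$ with a correction divisor $F$, whereas you use $h^k s_0$ and intersection numbers directly. The irreducible bisection case is then reduced to this by base change to the normalization of $D_h$ and pushing forward, where your cruder bounds $g(\widetilde{C'})\le g(\widetilde{C})$, $|\nu^{-1}(D)|\le |s_k^{-1}(D')|$ and the factor $\tfrac12$ on degrees suffice just as well as the paper's more careful count.
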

\proof
\noindent \textbf{Case 1}.
We consider the case that $D=D_{1}+D_{2} + D_{3}$,
where $D_{1}, D_{2}\subset \mbP(\mcE)$ are irreducible curves
mapping isomorphically to $C$ by $\pi$,
and $D_{3}=\sum_{t\in T}\pi^{-1}(t)$
for a finite set $T\subset C$.
In this case, we show that we can find a section $\sigma:C\to \mbP(\mcE)$ of
$\pi$ such that
$\sigma^{*}\mcO_{\mbP(\mcE)}(1)-\left|\sigma^{-1}(D) \right|$ is arbitrarily large.
We consider two subcases.

\textbf{Subcase 1a}.
We consider the case that $\mcE=\mcO_{C}^{2}$, thus $\mbP(\mcO_{C}^{2})=\mbP^{1}\times C$,
and that  $D_{1}=\{0\}\times C$, $D_{2}=\{\infty\}\times C$.

Take a globally generated ample line bundle $\mcA$ on $C$,
and choose two global sections $s_{1}, s_{2}\in \mrH^{0}(C, \mcA)$
such that the map $\mcO_{C}^{2}\to \mcA$
given by $(f_{1},f_{2})\mapsto f_{1}s_{1}+f_{2}s_{2}$
is surjective.
For $l\geq 1$, let $\lambda_{l}:\mcO_{C}^{2}\to \mcA^{l}$
be the morphism given by $(f_{1},f_{2})\mapsto f_{1}s_{1}^{l}+f_{2}s_{2}^{l}$,
and let $\sigma_{l}:C\to \mbP(\mcO_{C}^{2})$ be the section of $\pi:\mbP(\mcO_{C}^{2})\to C$
corresponding $\lambda_{l}$.
Let $d_{i}$ be the number of points of the support of the divisor of zeros $(s_{i})_{0}$.
Then we have
\[
\deg \sigma_{l}^{*}\mcO_{\mbP(\mcO_{C}^{2})}(1) -\left| \sigma_{l}^{-1}(D)\right|
\geq l\cdot \deg\mcA -(d_{1}+d_{2}+|T|),
\]
which becomes arbitrarily large as $l\to \infty$.

\textbf{Subcase 1b}.
We consider the remaining case of Case 1.

Choose an open subset $U\subset C\setminus T$
and an isomorphism $\mcE|_{U}\simeq \mcO_{U}^{2}$
in such a way that by the isomorphism $g:\pi^{-1}(U)\to \mbP(\mcO_{U}^{2})=\mbP^{1}\times U$ 
induced by the isomorphisms of sheaves,
we have $g(D_{1}|_{\pi^{-1}(U)})=\{0\}\times U$
and $g(D_{2}|_{\pi^{-1}(U)})=\{\infty\}\times U$.
For our purpose, we may enlarge $T$, so we assume that $C\setminus U=T$.
Put $D'_{1}=\{0\}\times C$, $D'_{2}=\{\infty\}\times C$
and $D'_{3}=\sum_{t\in T}\pi'^{-1}(t)$,
where $\pi':\mbP(\mcO_{C}^{2})\to C$ is the projection.
We denote also by $g$ the rational map $ \mbP(\mcE) \cdots\!\to \mbP(\mcO_{C}^{2}) $
given by $g:\pi^{-1}(U)\to \mbP(\mcO_{U}^{2})$. 

By resolving the indeterminacy of the rational map $g$,
we obtain a smooth projective surface $\widetilde{R}$ and  birational projective morphisms $\tau:\widetilde{R}\to \mbP(\mcE)$
and $\tau':\widetilde{R}\to \mbP(\mcO_{C}^{2}) $ such that $g\circ \tau =\tau'$.
Put $\tilde{\pi}:=\pi\circ\tau=\pi'\circ\tau':\widetilde{R}\to C$.
For $i=1,2$, let $\widetilde{D}_{i}\subset \widetilde{R}$ be the Zariski closure of $(\tau|_{\tilde{\pi}^{-1}(U)})^{-1}(D_{i})$.
Since $\left.\tau^{*}\mcO_{\mbP(\mcE)}(1)\right|_{\tilde{\pi}^{-1}(U)}
\simeq
\left.\tau'^{*}\mcO_{\mbP(\mcO_{C}^{2})}(1)\right|_{\tilde{\pi}^{-1}(U)}$,
there exists a divisor $F$ on $\widetilde{R}$
such that $\mathrm{Supp}\,F\subset \tilde{\pi}^{-1}(T)$
and 
$\tau^{*}\mcO_{\mbP(\mcE)}(1)
\simeq
\tau'^{*}\mcO_{\mbP(\mcO_{C}^{2})}(1)\otimes \mcO_{\widetilde{R}}(F).$
Take a large integer $m$ such that $F+m\tilde{\pi}^{*}(T)$ is effective.
Let $\sigma':C\to \mbP(\mcO_{C}^{2})$ be a section of $\pi'$.
There exists a section $\tilde{\sigma}:C\to \widetilde{R}$ of $\tilde{\pi}$ such
that $\tau'\circ\tilde{\sigma}=\sigma'$.
Then $\sigma:=\tau\circ\tilde{\sigma}$ is a section of $\pi$.
We have
\begin{align*}
\deg\sigma^{*} \mcO_{\mbP(\mcE)}(1) -\left|
\sigma^{-1}(D)
\right| &=
\deg\tilde{\sigma}^{*}\tau^{*} \mcO_{\mbP(\mcE)}(1) 
-|T|-|\sigma^{-1}(D)\cap U| \\
&= \deg\tilde{\sigma}^{*}\tau'^{*} \mcO_{\mbP(\mcO^{2})}(1) 
 +\deg\tilde{\sigma}^{*}\mcO_{\widetilde{R}}(F) \\
& \qquad -|T|-|\sigma'^{-1}(D')\cap U| \\
& \geq \deg\sigma'^{*}\mcO_{\mbP(\mcO^{2})}(1) 
-|\sigma'^{-1}(D')|-m|T|.
\end{align*}
By Subcase 1a, we can choose $\sigma'$ so that 
$\deg\sigma'^{*}\mcO_{\mbP(\mcO^{2})}(1) 
-|\sigma'^{-1}(D')|$ is arbitrarily large,
hence
$\deg\sigma^{*} \mcO_{\mbP(\mcE)}(1) -\left|
\sigma^{-1}(D)
\right| $ also can be arbitrarily large.

\textbf{Case 2}.
Finally we need to consider the case $D=D_{0}+D_{3}$,
where $D_{0}$ is an irreducible curve mapping $2:1$ to $C$,
and $D_{3}=\sum_{t\in T}\pi^{-1}(t)$
for a finite set $T\subset C$.
Let $\mu:C_{0}\to D_{0}$ be the normalization of $D_{0}$,
and put $\lambda:=(\pi|_{D_{0}})\circ\mu:C_{0}\to C$.
Let $\mcE_{0}:=\lambda^{*}\mcE$,
and denote the projection $\mbP(\mcE_{0})\to C_{0}$ by $\pi_{0}$,
and the natural map $\mbP(\mcE_{0})\to \mbP(\mcE)$ by $\tilde{\lambda}$.
For our purpose, we may enlarge $T$.
So by enlarging $T$,
we may assume that $\lambda|_{C_{0}\setminus \lambda^{-1}(T)}: 
C_{0}\setminus \lambda^{-1}(T)\to C\setminus T$
is \'{e}tale.
We have $\tilde{\lambda}^{-1}(D_{0})=D'_{1}+D'_{2}$,
where $D'_{1}, D'_{2} \subset \mbP(\mcE_{0})$ map isomorphically to $C_{0}$
by $\pi_{0}$.
Put $D'_{3}:=\tilde{\lambda}^{-1}(D_{3})=\sum_{t'\in T'}\pi_{0}^{-1}(t')$,
where $T':=\lambda^{-1}(T)$
Then $D':=\tilde{\lambda}^{-1}(D)$ is $D'_{1}+D'_{2}+D'_{3}$.
By Case 1,
we can find a section $\sigma:C_{0}\to \mbP(\mcE_{0})$ of $\pi_{0}$
in such a way that 
$$\deg\sigma^{*}\mcO_{\mbP(\mcE_{0})}(1)
-|\sigma^{-1}(D')|$$
is arbitrarily large.
Put $C':=\tilde{\lambda}\left(
\sigma(C_{0})
\right)$
and let $\nu: \widetilde{C'} \to C'$ be the normalization.
The morphism
$\tilde{\lambda}|_{\sigma(C_{0})}:\sigma(C_{0})\to C'$
factors as
$$
\sigma(C_{0})\xrightarrow{\xi}
\widetilde{C'} \xrightarrow{\nu} C'.
$$
Since $C_{0}\setminus \lambda^{-1}(T) \to C\setminus T$
is \'{e}tale,
$\sigma(C_{0})\setminus \pi_{0}^{-1}(T')
\to  \widetilde{C'}\setminus \pi^{-1}(T)$
is also \'{e}tale. 
So we have
$$
\left|
(\sigma(C_{0})\cap D')\setminus D'_{3}
\right|
=(\deg\xi)\cdot\left|
\nu^{-1}(D\setminus D_{3})\right|.
$$
Using this,
we have
\begin{align*}
\left|
\nu^{-1}(D)
\right| 
&= \left| \nu^{-1}(D\setminus D_{3})\right|+\left| \nu^{-1}(D_{3})\right| \\
& \leq \frac{1}{\deg\xi}\left|
(\sigma(C_{0})\cap D')\setminus D'_{3}
\right|
+
\frac{\deg \lambda}{\deg \xi}
|T|
\\
&
=\frac{1}{\deg\xi}
\left\{
\left|\sigma^{-1}(D')\right|
-|T'|+\deg\lambda\cdot |T|
\right\}.
\end{align*}
Then we have
\begin{align*}
\deg\nu^{*}\mcO_{\mbP(\mcE)}(1)-|\nu^{-1}(D)|
-2g(\widetilde{C'})+2  \\
\geq 
\frac{1}{\deg\xi}
\left\{
\sigma^{*}\mcO_{\mbP(\mcE_{0})}(1)-|\sigma^{-1}(D')|
-\deg\lambda\cdot |T|+|T'|
\right\}-2g(C_{0})+2
\end{align*}
Noting $1\leq \deg \xi \leq \deg\lambda$,
we see that this can be arbitrarily large
because
$\deg\sigma^{*}\mcO_{\mbP(\mcE_{0})}(1)
-|\sigma^{-1}(D')|$ can be arbitrarily large.
\endproof
%
\subsection{Hypersurface case}\label{subsection-Hypersurface}
The main theorem of this paper concerns the Lang-Vojta conjecture
over function fields for very general log projective spaces.
Extending the argument in \cite{CR} to the relative situation over
a curve $C$, we can obtain a result for very general hypersurfaces.

Let $C$ be a smooth projective curve over $\mbC$.
Let $\mcL$ be a globally generated line bundle of degree $l$ on $C$.
Let $W\subset \mrH^{0}(C,\mcL)$ be a subspace
such that the natural map $W\otimes \mcO_{C}\to \mcL$ is surjective.
For $\sigma\in \Sigma=\mbP_{*}\left(
\mrH^{0}(\Pn, \mcO(d))\otimes W
\right)$,
let $D_{\sigma}\subset \Pn\times C$
be the divisor of zeros of $\sigma$.

\begin{thm}\label{thm-hypersuface}
Assume that $d>\frac{3n+2}{2}$.
Let $\sigma\in \Sigma$ be a very general point.
Then for each non-vertical curve $Y\subset D_{\sigma}$ with
its normalization $\nu:\widetilde{Y}\to Y\subset D_{\sigma}$,
either (i) or (ii) of the following holds:
\begin{enumerate}
 \item[(i)]
 The inequality
 \[
 \frac{1}{e} \deg (pr_{\Pn}\circ \nu)^{*}\mcO_{\Pn}(1)
 \leq \frac{1}{e} \left(2g(\widetilde{Y})-2
 \right)-(2g(C)-2)+c_{0}^{\sharp}
 \]
 holds, where $e$ is the degree of the finite map $Y\to C$,
 and $c_{0}^{\sharp}=l(n-2)+\max\{
 2g(C)-2-l,0
 \}$.
 \item[(ii)] $Y\subset \mathbf{E}_{\sigma}^{\sharp}$,
 there $\mathbf{E}_{\sigma}^{\sharp}\subset D_{\sigma}$ is the Zariski
 closure of $\displaystyle \bigcup_{D_{\sigma}\supset l:\text{vertical line}}l$.
\end{enumerate}
\end{thm}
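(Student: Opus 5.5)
The plan is to run the argument of Sections~\ref{section-setting}--\ref{section-proofofmain} with the pair $(X,D_\sigma)$ replaced by $D_\sigma$ itself; this is the relative-over-$C$ analogue of the argument of \cite{CR}. As in the proof of Theorem~\ref{thm-main}, I would first reduce to a family statement. Decompose the relative Hilbert scheme of integral non-vertical curves contained in the fibers of $\mcD\to\Sigma^{sm}$ into countably many $\GLV$-invariant locally closed pieces, exactly as in Claim~\ref{claim-hilb}. It then suffices to prove a hypersurface analogue of Theorem~\ref{thm-core} for a $\GLV$-equivariant family $f:\mcY\to\mcD'\subset X\times\Sigma'$, with $p_1$ smooth with curve fibers: either inequality (i) holds for general $\sigma'$, or a general $y$ admits a vertical line $l_y\ni f(y)$ with $l_y\subset D'_{\sigma'}$.

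Next, define the normal sheaf $\mcN_f$ by $0\to T_{\mcY}(-\log\mcS_{\mcY})\to f^{*}T_{\mcD'}(-\log\mcS)\to\mcN_f\to0$; it has rank $n-1$. Restricting to $Y_{\sigma'}$ and using $\det T_{D_{\sigma'}}(-\log\mcS)\simeq(\mcO_{\Pn}(n+1-d)\boxtimes K_C^{-1}\mcL^{-1}(-S))|_{D}$ gives the upper bound
\[
\deg\mcN_f|_{Y_{\sigma'}}\le 2g(Y_{\sigma'})-2+(n+1-d)\deg\mcO_{\Pn}(1)|_{Y_{\sigma'}}+e(2-2g(C)-l).
\]
Here the $e|S|$ terms cancel, since $\mcS$ pulls back to $e|S|$ points with boundary on both sides. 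For the lower bound, repeat the chain of induced quotients from the proof of the case (i) in Proposition~\ref{prop-half}, now relative to $\Pn$ and using $T_{\mcD/\Pn}$ instead of the log tangent bundle. The pieces are as follows: $\mcN_2$ with nonnegative degree; $\mcN_4$ twisted by $\mcL$ being globally generated; $\mcN_6$ of rank at most $1$, a quotient of $T_C(-S)$, or of $T_C$ here, which explains why $|S|$ drops out of $c_0^{\sharp}$; and $\mcN_5$, a quotient of $\mcM^{d}_{\Pn}$-type bundles restricted to the incidence divisor $\mathbf{D}$. Using Proposition~\ref{prop-Mposi} with the multiplication by general $P_i\in S^{d-1}V$, one gets $\deg\mcN_5\ge n_5el-s_0\deg\mcO_{\Pn}(1)$. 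Since the total rank is now $n-1$, the constant becomes $l(n-2)+\max\{0,2g(C)-2-l\}=c_0^{\sharp}$. This yields (i) whenever $d-n-1-s_0\ge1$.

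In the remaining case $d-n-1-s_0\le0$, the rank count $n-1\ge n_5\ge 2s_0-1$ gives $s_0\le n/2$, so $d\le (3n+2)/2$. This contradicts the hypothesis $d>\frac{3n+2}{2}$, provided we can show $s_0-s_1\ge2$, as in Claim~\ref{claim-ss2} (with $n$ replaced by $n-1$). Granting that, the \cite[Lemma 3.2]{CR}-type argument produces a canonical line $l_y\ni z$ with $M^{d}_{l_y}\subset\mathrm{Im}\,k_*+\sum P_iM^1_z$. Running the integrability argument of (\ref{align-T'T}) and (\ref{align-TgT}) then shows that the map $\rho$ to divisors on $l_0$ has image of dimension $\le d-n-s_0+1-1$; the extra $-1$ comes because $z\in D$ forces the divisor to contain $p_0$. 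I would aim to push this to dimension $0$ with $D|_{l_y}$ equal to $d\cdot p_0$ or all of $l_y$. Then a moduli comparison as in Section~\ref{sect-lines} (lines meeting $D$ only at one point, giving positive log canonical contribution) excludes every case except $l_y\subset D'_{\sigma'}$, i.e.\ (ii).

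The main obstacle is this last step. Once $s_0$ is pinned down, one must show that the only surviving exceptional configuration is a line contained in $D_\sigma$. I expect to need the analogue of the canonical bundle computation of Proposition~\ref{prop-KRIJ} for the moduli of lines osculating $D$ at a point, and I would first try to adapt Corollary~\ref{cor-ggen} with $E_0=d\,p_0$.
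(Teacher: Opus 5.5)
Up to the construction of the canonical line $l_y$, your outline follows the route the paper indicates: run the proof of Theorem \ref{thm-main} for curves inside $D_\sigma$, i.e.\ the relative version of \cite{CR}. Your numerics leading to $c_0^{\sharp}$ are right provided you take $S=\emptyset$. If you keep log poles along $\mcS$, then $\mcN_6$ is a quotient of $T_C(-S)$ and $|S|$ reappears in the constant.

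The middle step has a logical slip. The rank count $n-1\ge n_5\ge 2s_0-1$ is only available under the hypothesis $s_0-s_1\le 1$. Its contradiction with $d>\frac{3n+2}{2}$ is precisely the proof that $s_0-s_1\ge 2$. It does not dispose of the case $d-n-1-s_0\le 0$, which is exactly what the rest of the argument has to handle.

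The genuine gap is the last step, which you leave open. Let $F$ be an equation of $D'_{\sigma'}|_{\Pn\times\{c\}}$ and $T'=k_*^{-1}(M^d_{l_y}+\mbC F)$. Then $\dim T/T'=d-s_0-1$ when $l_y\not\subset D'_{\sigma'}$, hence $\dim\mathrm{Im}\,\rho\le d-n-s_0\le 1$. Since $\mathrm{Im}\,\rho$ is irreducible and stable under $\mathrm{Aut}(l_0,z_0)$, this only tells you that one of the following holds:
\begin{itemize}
\item $D'_{\sigma'}|_{l_y}=mz+(d-m)q$ with $1\le m<d$;
\item $D'_{\sigma'}|_{l_y}=dz$;
\item $l_y\subset D'_{\sigma'}$.
\end{itemize}
The first case can occur exactly when $s_0=d-n-1$. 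There the normal-bundle inequality reads $0\cdot\deg\mcO_{\Pn}(1)|_{Y_{\sigma'}}\le\cdots$ and gives nothing. Nothing in your argument rules out a one-dimensional image, so ``pushing to dimension $0$'' has no mechanism. In both non-contained cases the line is not in $\mathbf{E}^{\sharp}_{\sigma}$, so you must actually derive (i) from it.

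The tool you propose does not apply as stated, for three reasons.
\begin{itemize}
\item The construction of \S\ref{sect-lines} needs $p_0\notin\Supp\,E_0$ and $\sharp\Supp\,E_0\ge 2$ so that $\mathfrak{S}_1$ is finite. For $E_0=dp_0$ or $mp_0+(d-m)q_0$, the stabilizer in $\mathrm{Aut}(l_0,p_0)$ is $2$-dimensional, respectively $1$-dimensional.
\item The positivity in Corollary \ref{cor-ggen} comes from the factor $\mcO_{\Pn}(2\sum_i e^{(i)}-4)$. This is $\ge 2$ only when $E_0$ has at least three support points, and it vanishes with two. That is exactly why, in the log setting, such lines are put into $\mathbf{E}_\sigma$ rather than handled by \S\ref{section-proofcore}.
\item Since $Y\subset D_\sigma$ and the target inequality contains only $2g(\widetilde{Y})-2$, you cannot use log poles along the pull-back of $\mathbf{D}$ on the $Y$ side. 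So the $I_{\barR}$ and $J^{(i)}_{\barR}$ terms are unavailable.
\end{itemize}

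What is missing is a new positivity computation for the canonical bundle of a moduli space of triples $(l,p,B)$ with $p\in B$ and $B|_l\sim E_0$. For $E_0=mp_0+(d-m)q_0$ this would be, for instance, a projective bundle over $\mbL\times_{\mbG}\mbL$. It must be paired with a lower bound for the corresponding normal sheaf, showing that these two configurations force (i). Until that is supplied, the proposal does not prove the theorem.
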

The proof of this theorem is similar to that of Theorem \ref{thm-main}.


\end{document}